\newif\if@check@engine  \@check@enginetrue 
\newcommand{\skipfig}[1]{}
\tikzset{
  symbol/.style={
    draw=none,
    every to/.append style={
      edge node={node [sloped, allow upside down, auto=false]{$#1$}}}
  }
}
\newcommand{\CC}{(\mathbb{C}\setminus 0)}
\newcommand{\C}{\mathbb{C}}
\newcommand{\R}{\mathbb{R}}
\newcommand{\Q}{\mathbb{Q}}
\newcommand{\Z}{\mathbb{Z}}
\newcommand{\CP}{\mathbb{CP}}
\renewcommand{\dim}{{\rm dim}\,}
\newcommand{\codim}{{\rm codim}\,}
\newcommand{\Vol}{{\rm Vol}\,}
\newcommand{\MV}{{\rm MV}\,}
\newcommand{\Trop}{{\rm Trop}\,}
\newcommand{\supp}{{\rm supp}\,}
\newtheorem{theorem}{Theorem}[section]
\newtheorem{proposition}[theorem]{Proposition}
\newtheorem{lemma}[theorem]{Lemma}
\newtheorem{corollary}[theorem]{Corollary}
\theoremstyle{definition}
\newtheorem{definition}[theorem]{Definition}
\newtheorem{example}[theorem]{Example}
\newtheorem{remark}[theorem]{Remark}
\newtheorem{conjecture}[theorem]{Conjecture}
\newtheorem*{conv}{Convention}
\newtheorem{problem}[theorem]{Problem}
\begin{document}

\title{The ring of local tropical fans \\ and tropical nearby monodromy eigenvalues} 
\author{Alexander ESTEROV 
\thanks{HSE University \newline
{\bf E-mail:} aesterov@hse.ru \newline
The author is partially supported by International Laboratory of Cluster Geometry NRU HSE, RF Government grant, ag. 075-15-2021-608 from 08.06.2021.
\newline {\bf 2010 Mathematics Subject Classification:} 14T05, 14M25, 32S40, 14C17, 32S45
\newline {\bf Keywords:} monodromy, characteristic class, tropicalization, resolution of singularities, fiber polytope
}}

\maketitle

\begin{abstract} We extend the tropical intersection theory to tropicalizations of germs of analytic sets. In particular, we construct a (not entirely obvious) local version of the ring of tropical fans with a nondegenerate intersection pairing.  

As an application, we study nearby monodromy eigenvalues -- the eigenvalues of the monodromy operators of singularities, adjacent to a given singularity of a holomorphic function $f$.  
More precisely, we express some of such values in terms of certain resolutions of $f$.
The expression is given it terms of the exceptional divisor strata of arbitrary codimension, generalizing the classical A'Campo formula that consumes only codimension 1 strata and produces only monodromy eigenvalues at the origin. For this purpose, we introduce tropical characteristic classes of germs of analytic sets, and use this calculus to detect some of the nearby monodromy eigenvalues, which we call tropical. 

The study is motivated by the monodromy conjectures by Igusa, Denef and Loeser: every pole of an appropriate local zeta function of $f$ induces a nearby monodromy eigenvalue. We propose a presumably stronger version of this conjecture: all poles of the local zeta function induce tropical nearby monodromy eigenvalues.

In particular, if the singularity is non-degenerate with respect to its Newton polyhedron $N$, then the tropical monodromy eigenvalues can be expressed in terms of fiber polytopes of certain faces of $N$, so our conjecture (unlike the original ones) becomes a purely combinatorial statement about a polyhedron. This statement is confirmed for the topological zeta function in dimension up to 4 in a joint work with A. Lemahieu and K. Takeuchi, which, in particular, supports our conjecture and proves the original one for non-degenerate singularities in 4 variables.
\end{abstract}

\section{Introduction}

\subsection{Local tropical intersection theory}

To every (very) affine algebraic variety, one can associate its tropical fan, so that:

i) tropical fans form a graded ring $\bigoplus K_k$ (a combinatorial model for the ring of conditions of the complex torus);

ii) taking the product of fans of complementary dimension in this ring defines their non-degenerate intersection pairing, taking values in the 0-dimensional component $K_0=\R$;

iii) the intersection number of varieties in general position equals the intersection number of their tropical fans.

When trying to extend this tropical intersection theory from affine algebraic sets to germs of analytic sets in $(\C^n,0)$, there is a natural way to define the ``local tropical fan'' of an analytic germ, so that, for a germ of an algebraic set $V\subset\C^n$ at 0, its local fan is just the intersection of the conventional global topical fan of $V$ with the (open) positive quadrant (see e.g. \cite{local1}, \cite{local2} and \cite{local3}).

Such local tropical fans do form a ring, but the intersection pairing in this ring is trivial, as its 0-dimensional component is.

In this paper, we embed this ring into a certain larger ring of so called relative tropical fans (Definition \ref{defreltrop}), which does have a nondegenerate intersection pairing, and still has algebraic geometry meaning, see Section 4.

This allows us to define the tropical characteristic class of a germ of analytic set (the limit of the CSM classes in arbitrarily rich toric resolutions, Definition \ref{deftcc} below), similarly to the case of algebraic sets \cite{E17}, \cite{gross}, \cite{shaw} (the limit of the CSM classes in arbitrarily rich toric compactifications), and apply them to the study of monodromy of complex analytic functions, with a view towards monodromy conjectures.

\subsection{Polyhedral monodromy conjecture}

Let $f:(\C^n,0)\to(\C,0)$ be a germ of a holomorphic function. For all sufficiently small $x_0\in\C^n$, the {\it nearby singularity} germ $$f_{x_0}:(\C^n,0)\to(\C,0),\quad f_{x_0}(x)=f(x_0+x),
$$ is defined. We shall define  {\it nearby monodromy eigenvalues} of $f$ as the complex numbers that appear as eigenvalues of the Milnor monodromy of the nearby singularities at arbitrarily small $x_0$.

The aim of this paper is to study these numbers in terms of a given resolution of singularities of $f$ at $0$, and, particularly, in terms of the Newton polyhedron of the germ $f$ at $0$ provided it is non-degenerate.

The latter question might seem to be trivial at the first glance: every nearby monodromy eigenvalue of $f$ is a root or pole of the Milnor monodromy $\zeta$-function $\zeta_{x_0}$ of the germ $f_{x_0}$ at a suitable point $x_0$, and the Varchenko formula \cite{V} expresses $\zeta_{x_0}$ in terms of the Newton polyhedron of $f_{x_0}$.

However, this recipe will not work in high dimension (for $n\geqslant 4$): the germ $f_{x_0}$ may be degenerate with respect to its Newton polyhedron even for non-degenerate $f$, so $\zeta_{x_0}$ may be out of reach to the Varchenko formula: see Example 7.4 in \cite{ELT}, which inspired this work.

As a possible solution, we shall assign to every lattice polyhedron $N$ the set of its {\it tropical nearby monodromy eigenvalues} (abbreviating to t.n.e., see Definition \ref{deftropicalpoly} below) with the following property. 
\begin{theorem}[Corollary \ref{tropicalpoly}]\label{thmain1} Every t.n.e. of the polyhedron $N$ is a nearby monodromy eigenvalue of every non-degenerate singularity $f$ with the Newton polyhedron $N$.
\end{theorem}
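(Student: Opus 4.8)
The plan is to reduce the statement to a statement about a single, explicitly chosen nearby singularity whose Milnor monodromy zeta function can be computed by the Varchenko formula, and then to match the poles/roots of that zeta function with the combinatorially defined t.n.e.\ of $N$. Concretely, a t.n.e.\ of $N$ should be (by Definition \ref{deftropicalpoly}, which I am assuming) read off from the fiber polytope of a face $\tau$ of $N$ together with the primitive generator of the cone dual to some face of that fiber polytope; the geometric content of this data is that it records a toric modification of $\C^n$ along a stratum sitting over (a generic point of) the corresponding coordinate subspace. So the first step is: given the combinatorial datum producing a candidate t.n.e.\ $\lambda$, choose the face $\tau\subseteq N$ it comes from, and choose a point $x_0$ in the relative interior of the coordinate stratum associated to $\tau$, scaled to be arbitrarily small. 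First I would verify that for generic such $x_0$ the translated germ $f_{x_0}$ has Newton polyhedron governed by $\tau$ — more precisely, that the principal part of $f_{x_0}$ on the relevant face is obtained from the face $\tau$-truncation $f^\tau$ by the substitution $x\mapsto x_0+x$, and that non-degeneracy of $f$ with respect to $N$ is inherited on the faces that matter for the computation.

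The second step is to compute $\zeta_{x_0}$, or rather the relevant factor of it, using the Varchenko formula applied to $f_{x_0}$. Here the key point — and the reason the face $\tau$ and its fiber polytope enter — is that although $f_{x_0}$ may itself be degenerate (this is exactly the phenomenon flagged in Example 7.4 of \cite{ELT} in the introduction), the degeneracy is controlled: after the toric modification encoded by the combinatorial datum, the pulled-back function becomes non-degenerate, and a direct A'Campo-type computation on the exceptional strata of that modification produces a factor $(1-t^m)^{\pm\chi}$ whose root is precisely $\lambda$. The fiber polytope enters because the modification over the stratum of $x_0$ is, fiberwise over that stratum, a toric resolution whose fan refines the normal fan of the fiber polytope of $\tau$; the Euler-characteristic multiplicity $\chi$ is then a mixed-volume/lattice-point count on faces of that fiber polytope, and one must check it is nonzero so that the factor genuinely contributes a root or pole and is not cancelled. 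This non-cancellation is where I expect the main work to lie: one must argue that the contribution of the chosen stratum is not killed by contributions of other strata of the same ``multiplicity'' $m$, which is a convex-geometric transversality/genericity argument about the fiber polytope — presumably this is exactly the content built into Definition \ref{deftropicalpoly}, so the task is to show the definition was made so that this cancellation cannot occur.

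The third step is bookkeeping: one must make sure the computation is uniform in the scaling $x_0\to 0$, i.e.\ that the value $\lambda$ arises as a monodromy eigenvalue of $f$ at $x_0$ for a sequence of $x_0$ tending to $0$, so that it qualifies as a \emph{nearby} monodromy eigenvalue in the sense defined above, and not merely at one point. Since the whole construction is torus-equivariant and the face $\tau$-truncation $f^\tau$ is quasi-homogeneous with respect to the cone over $\tau$, scaling $x_0$ along the appropriate one-parameter subgroup changes $f_{x_0}$ only by a coordinate rescaling, which does not affect the monodromy zeta function; hence a single computation suffices for all small $x_0$ on that orbit. Finally, one invokes Theorem \ref{thmain1}'s own hypothesis machinery — the tropical characteristic class calculus developed earlier in the paper — to organize the A'Campo-type sum over exceptional strata of arbitrary codimension into the fiber-polytope formula, i.e.\ the statement ``Corollary \ref{tropicalpoly}'' is obtained by specializing the general results on tropical nearby monodromy eigenvalues (proved for arbitrary germs of analytic sets) to the non-degenerate toric situation, where the tropical characteristic class becomes the fiber polytope. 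The only genuinely new input beyond that specialization is the convex-geometric non-vanishing of the multiplicity, which is the obstacle noted above.
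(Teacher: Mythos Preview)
Your proposal is substantially more complicated than the paper's argument, and the complication comes from a misreading of what Definition \ref{deftropicalpoly} actually says. You treat the t.n.e.\ as data attached to a single face $\tau$ via a fiber polytope, then try to pick a point $x_0$ on the corresponding stratum and compute $\zeta_{x_0}$ directly; this forces you to worry about degeneracy of $f_{x_0}$, about which face contributes, and about ``non-cancellation'' between strata, which you correctly flag as the hard step in your scheme. But in the paper the definition of a t.n.e.\ is that the tropical fan
\[
\Phi_{m,I}\;=\;\sum_{\Gamma}\pi_*\langle \Gamma\rangle
\]
is nonzero in $K(\mathcal{C}^I)$, and fiber polytopes play no role here --- they enter only in Section~6 as a device to make the codimension $\leqslant 1$ part of $\Phi_{m,I}$ explicit.

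The paper's proof (of the more general Theorem~\ref{main}, of which Corollary~\ref{tropicalpoly} is the non-degenerate specialization) is two lines. By Property~4 of Definition~\ref{deftcc}, taking tropical characteristic classes commutes with the MacPherson pushforward, so $\Phi_{m,I}=\bigl\langle\, \sum_{m\mid M} F_{M,I}\,\bigr\rangle$ for a certain constructible function $\sum_{m\mid M} F_{M,I}$ on $\CC^I$. If this class is nonzero, then the function is nonzero at some point $x_0\in\CC^I$. Now A'Campo's formula, applied upstairs on the toric resolution $Z$ (Theorem~\ref{acampo}), gives $\zeta_{x_0}(t)=\prod_M (1-t^M)^{F_{M,I}(x_0)}$, and the order of $s=e^{2\pi i k/m}$ as a root or pole is exactly $\sum_{m\mid M} F_{M,I}(x_0)\ne 0$. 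Done.

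So the very step you identify as ``where the main work lies'' --- arranging that the contribution from your chosen stratum is not cancelled by others --- is what the characteristic-class machinery handles for free: nonvanishing of the \emph{class} already guarantees a point $x_0$ where the \emph{function} does not vanish, with no face-by-face convex-geometric analysis required. Likewise, the degeneracy of $f_{x_0}$ that worries you is irrelevant, because A'Campo is applied on the global resolution $Z$ (which is good for every $x_0$ simultaneously), not to a new Newton polyhedron of $f_{x_0}$ downstairs. Your closing sentence (``specializing the general results\ldots'') is essentially the whole proof; the three preceding steps can be discarded.
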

\begin{remark} 1. T.n.e.'s include all roots and poles of the monodromy $\zeta$-function $\zeta_{0}$, but in general we do not expect them to contain all nearby monodromy eigenvalues of $f$ (neither do we have examples of such $f$).

2. The reader interested in the definition of t.n.e.'s of polyhedra, can directly proceed from Section \ref{sspecnewton0} (where the dual tropical fans of polyhedra are introduced) to Section \ref{sspecnewton} (where we give the definition), and further to Section 6 (where we simplify it in terms of fiber polyhedra).

The part of the paper between Sections \ref{sspecnewton0} and \ref{sspecnewton} is devoted to establishing a technique to prove that t.n.e.'s of the Newton polyhedron of $f$ are really nearby monodromy eigenvalues of $f$.
\end{remark}

The study of nearby monodromy eigenvalues is motivated in particular by the Denef--Loeser monodromy conjecture \cite{DL}: for every pole $q$ of the topological $\zeta$-function of $f$, the number $\exp{2\pi i q}$ is a nearby monodromy eigenvalue. We expect that our notion of t.n.e. allows to make this conjecture more constructive as follows. Recall that Denef and Loeser \cite{DL} found an expression for the topological $\zeta$-function of a non-degenerate singularity $f$ in terms of its Newton polyhedron $N$; we denote this expression by $Z_N$. 
\begin{conjecture}[Polyhedral Denef--Loeser conjecture]\label{conjpoly} For every polyhedron $N$ and every pole $q$ of $Z_N$, the number $\exp{2\pi i q}$ is a t.n.e. of $N$.
\end{conjecture}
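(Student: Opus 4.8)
\noindent\emph{Towards Conjecture \ref{conjpoly}.}
Note first that the statement is purely combinatorial: both the poles of $Z_N$ and the t.n.e.\ of $N$ are invariants read off from the lattice polyhedron $N$ alone, and it is Theorem \ref{thmain1} that later turns any t.n.e.\ into a genuine nearby eigenvalue of a non-degenerate $f$ with Newton polyhedron $N$. So the plan is to build an explicit dictionary between the two sides and to check that it carries poles to t.n.e.

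The first step is to put the Denef--Loeser formula for $Z_N$ into normal form. Fixing a unimodular simplicial fan $\Sigma$ subdividing the normal fan of $N$, one writes $Z_N$ as a sum over the cones $\sigma\in\Sigma$ of elementary rational functions of $q$ whose poles lie among the numbers $-\nu(a)/m_N(a)$, where $a$ ranges over the primitive ray generators of $\Sigma$, $m_N$ is the support function of $N$, and $\nu(a)=\langle a,(1,\dots,1)\rangle$; the numerator of the piece attached to $\sigma$ is, up to a unit, a volume- or Euler-characteristic-type invariant of the face of $N$ selected by $\sigma$. Thus a candidate pole $q_0$ is carried by the set of rays $a$ with $\nu(a)/m_N(a)=-q_0$, and whether $q_0$ actually occurs in $Z_N$ is governed by an alternating sum of these face invariants over the cones of $\Sigma$ spanned by subsets of that set of rays. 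The first task is to single out, for a genuine pole $q_0$, a face $\tau$ of $N$ that survives these cancellations and is thus ``responsible'' for $q_0$.

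The second step is to unwind Definition \ref{deftropicalpoly} into the fiber-polytope form advertised in the abstract: the t.n.e.\ of $N$ attached to a face $\tau$ and a weight $a$ are the numbers $\exp(-2\pi i\,\nu(a)/m_N(a))$ that appear with nonzero multiplicity, the multiplicity being a normalized-volume or lattice-point count of a fiber polytope associated with $\tau$ and $a$. The goal then becomes: for every face $\tau$ responsible for a genuine pole $q_0=-\nu(a)/m_N(a)$, show that this fiber polytope is nondegenerate enough to contribute $\exp(2\pi i q_0)$. I would establish this by matching the two bookkeeping devices, since the face invariant feeding the numerator of the $\sigma$-piece of $Z_N$ and the count feeding the fiber-polytope multiplicity are controlled by the same face data of $N$; the content is to show that a nonzero alternating sum of the former forces a nonzero value of the latter.

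That implication is where the real difficulty lies. The t.n.e.\ side is a sum of nonnegative (volume-type) contributions, so it detects $q_0$ as soon as a single summand is positive, whereas $Z_N$ detects $q_0$ only through a signed cancellation pattern, and ``alternating sum nonzero'' need not imply ``some summand of the matching positive sum is nonzero'' without extra input --- this is precisely the mechanism that makes the original monodromy conjecture hard. I would expect the proof to need either a positivity or monotonicity statement ensuring that the fiber-polytope contribution dominates the cancellations, or a case-by-case reduction in the spirit of the $n\leqslant 4$ verification cited in the abstract. A secondary, more technical point is to justify the fiber-polytope reformulation of Definition \ref{deftropicalpoly} used in the second step, i.e., to check that the tropical characteristic classes underlying that definition localize on the toric strata exactly as the fiber-polytope count predicts; this step is vacuous if Definition \ref{deftropicalpoly} is already phrased in those terms.
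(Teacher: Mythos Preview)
The statement you address is presented in the paper as an open conjecture, not a theorem; the paper does not prove it. Immediately after stating it, the paper notes that the case $n\leqslant 4$ is established in the companion work \cite{ELT}, but for general $n$ the statement remains open. Your write-up, labelled ``Towards Conjecture \ref{conjpoly}'', correctly frames itself as a strategy rather than a proof, and you rightly flag ``where the real difficulty lies''. However, two points in your outline are inaccurate and would mislead an actual attempt.

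First, your assertion that ``the t.n.e.\ side is a sum of nonnegative (volume-type) contributions, so it detects $q_0$ as soon as a single summand is positive'' is false. By Definition \ref{deftropicalpoly} the quantity $\Phi_{m,I}$ is a sum of pushforwards $\pi_*\langle\Gamma\rangle$ with $\langle\Gamma\rangle=\sum_d(-1)^{d-1}[\Gamma]^d$ (Example \ref{tccnondeg}.2), so the signs alternate. Concretely, in codimension $0$ the test reduces to the Varchenko product $\prod_B(1-t^{m_B})^{(-1)^{\dim B}\Vol B}$ (Corollary \ref{coroltne}.1), and in codimension $1$ to the non-triviality of $\Psi_{m,I}=\sum_B(-1)^{\dim B}\int_B N$ --- both manifestly signed. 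Thus the t.n.e.\ side is subject to the same kind of cancellation as the $Z_N$ side, and the dichotomy you draw (``signed cancellation on one side, positive sum on the other'') collapses. The genuine task is to compare two different signed expressions and show that non-vanishing of the first forces non-vanishing of the second; this is what \cite{ELT} carries out by hand for $n\leqslant 4$.

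Second, the fiber-polytope description you invoke is not the definition of t.n.e.\ but only its codimension $1$ component (Proposition \ref{tccfib} and Corollary \ref{coroltne}.2). For higher codimension, Definition \ref{deftropicalpoly} requires the full tropical characteristic class, and Section 6 explicitly leaves the higher-codimension fiber-polytope analogue open (Problem \ref{probldef}). So your ``second step'' covers only part of the t.n.e.\ side, and the ``secondary, more technical point'' you flag at the end is in fact substantial, not vacuous.
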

Unlike the original Denef--Loeser conjecture for non-degenerate singularities, this (presumably stronger) conjecture is a purely combinatorial statement about lattice polyhedra in all dimensions. For $n\leqslant 4$, it is proved in \cite{ELT} (which, together with Theorem \ref{thmain1}, implies the original Denef--Loeser conjecture for non-degenerate singularities of functions of four variables).  
\subsection{Tropical nearby monodromy eigenvalues via embedded toric resolutions}
The scope of this paper is not restricted to singularities that are non-degenerate with respect to Newton polyhedra: we shall actually introduce the notion of tropical nearby monodromy eigenvalues in the more general setting of embedded toric resolutions in the sense of Teissier \cite{teissier}. More specifically, let $f:(\mathcal{S},0)\to(\C,0)$ be a holomorphic function on a germ of an analytic set. We do not impose any non-degeneracy assumptions this time. The case $(\mathcal{S},0)=(\C^n,0)$ is relevant to the original Denef--Loeser conjecture, but the computations of t.n.e.'s below will be valid for any $\mathcal{S}$.
\begin{definition}\label{defemb} An embedded toric resolution of $f$ consists of a germ of a proper embedding $i:(\mathcal{S},0)\hookrightarrow(\C^N,0)$ and a toric blow-up $\pi:Z\to\C^N$ with the set of orbits $\mathcal{E}$, satisfying the following properties:

1. The preimage $\pi^{-1}(0)$ is a compact union of a certain collection of orbits $\mathcal{E}_\varnothing\subset\mathcal{E}$.

2. The images $i(\mathcal{S})$ and $i(f=0)$ intersect the torus $\CC^N$ by their dense subsets, denoted by $S$ and $X$ respectively.

3. The strict transforms of $X\subset S$ under $\pi$, denoted by $Y\subset W$, are smooth in a neighborhood of $\pi^{-1}(0)$ and intersect transversally each of the orbits $E\in\mathcal{E}_\varnothing$ (and hence each of $E\in\mathcal{E}$) by smooth sets $Y_E$ and $W_E$ respectively.
\end{definition}

For every germ of a holomorphic function $f$, admitting an embedded toric resolution $\pi$, we shall define the set of {\it tropical nearby monodromy eigenvalues (t.~n.~e.)} $M_{f,\pi}$ (see Definition \ref{deftne} below) that depends only on the combinatorics of $Z$ and the Euler characteristics of the connected components of the strata $W_E\setminus Y_E,\, E\in\mathcal{E}_\varnothing$. 
\begin{theorem}[Theorem \ref{main} below]\label{th15} For every embedded toric resolution $\pi$ of a singularity $f$, every t.~n.~e. is indeed a nearby monodromy eigenvalue.
\end{theorem}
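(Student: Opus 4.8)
The plan is to derive the theorem from A'Campo's formula for the Milnor monodromy zeta function: a tropical nearby eigenvalue attached to a stratum $E\in\mathcal{E}_\varnothing$ will be realised at a generic point of a component of the singular locus of $\{f=0\}$ passing through $0$, by reading off A'Campo's product for the resolution that $\pi$ induces transversally to that component.

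\emph{Reduction and unwinding the definition.} Recall that a complex number is a nearby monodromy eigenvalue of $f$ exactly when it is a root or a pole of the monodromy zeta function $\zeta_{x_0}$ of $f_{x_0}$ for arbitrarily small $x_0\in\{f=0\}$; after the cyclotomic factorisation $1-t^m=\prod_{d\mid m}\Phi_d(t)$, it suffices to produce such $x_0$ for which the total exponent of $\Phi_{\mathrm{ord}(\lambda)}$ in A'Campo's product for $\zeta_{x_0}$ is nonzero. I would then unwind Definition~\ref{deftne}: a t.~n.~e.\ $\lambda$ is built from an orbit $E\in\mathcal{E}_\varnothing$ — with defining cone $\sigma$, primitive ray generators $v_1,\dots,v_k$, exceptional divisors $D_1,\dots,D_k\supset E$ and the vanishing orders $m_1,\dots,m_k$ of the pulled-back $f$ along them — together with a connected component $C$ of the stratum $W_E\setminus Y_E$, in such a way that an A'Campo-type product of factors $(1-t^{m_i})^{\pm\chi_i}$, with exponents $\chi_i$ governed by $\chi(C)$, genuinely has $\lambda$ as a root or a pole; the combinatorial clause of the definition is precisely the guarantee that this root or pole is not destroyed by cancellation.

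\emph{Geometric realisation.} Since $E$ lies over $0$ (condition~1 of Definition~\ref{defemb}), the strict transform $Y$ of $\{f=0\}$ meets $E$ along $Y_E$, which $\pi$ contracts to $0$, and the image under $\pi$ of a component of some $Y\cap D_i$ through $E$ is a component $\Sigma$ of the singular locus of $\{f=0\}$ whose closure contains $0$ (in the isolated-singularity case $\Sigma=\{0\}$ and the construction below just recovers $\zeta_{f,0}$). For a generic point $x_0\in\Sigma$ close to $0$, the germ $f_{x_0}$ is, up to smooth factors, the transverse singularity of $\{f=0\}$ along $\Sigma$, and the restriction of $\pi$ to a slice transverse to the lift of $\Sigma$ is an embedded resolution of it: its exceptional divisors are the transverse slices of $D_1,\dots,D_k$ — this is the point where the codimension-$k$ stratum $E$ of $Z$ becomes a codimension-$1$ phenomenon downstairs — with multiplicities $m_1,\dots,m_k$, and, using transversality of $W$ and $Y$ to all orbits (condition~3) together with the fibration of each $D_i$ over $\pi(D_i)$, the Euler characteristics entering A'Campo's formula for this slice are exactly those of the connected components of $W_E\setminus Y_E$ — the strict transform $W$ being what cuts the count down from the ambient $\C^N$ to $\mathcal{S}$. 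Hence A'Campo's formula computes $\zeta_{x_0}$ as a product of the factors $(1-t^{m_i})^{\pm\chi_i}$ (together with a factor coming from $Y$), and by the very way $\lambda$ was manufactured in Definition~\ref{deftne} it is a root or a pole of $\zeta_{x_0}$. As $x_0$ may be taken arbitrarily close to $0$ on $\Sigma$, this makes $\lambda$ a nearby monodromy eigenvalue of $f$, proving the theorem.

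\emph{Main obstacles.} I expect two genuinely delicate points. The combinatorial one, already isolated inside Definition~\ref{deftne}, is to show that the exponent $\sum_{j:\ \mathrm{ord}(\lambda)\mid m_j}\chi_j$ attached to the cyclotomic factor $\Phi_{\mathrm{ord}(\lambda)}$ does not vanish; controlling this sum is the perennial obstacle in all monodromy-conjecture arguments (and, in the non-degenerate case, it is precisely where the fiber-polytope description of the $\chi_j$ should make the question effective). The geometric one is the exact matching of the global Euler characteristics of the components of $W_E\setminus Y_E$ with the local A'Campo data of the transverse slice at $x_0$: this requires understanding how $\pi$ fibres over $\Sigma$, how the transverse slice interacts with $W$ as opposed to the ambient $Z$, and what happens when $\Sigma$ or $x_0$ lies on a coordinate stratum rather than in the open torus — and it is here that all three conditions of Definition~\ref{defemb} (properness of $i$, the orbit decomposition of $\pi^{-1}(0)$, and transversality of $W$ and $Y$ to the orbits) are used in an essential way.
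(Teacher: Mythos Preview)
Your high-level instinct — that A'Campo's formula is the engine — is correct, and the paper does exactly this. But your reading of Definition~\ref{deftne} is off in a way that sends you down an unnecessarily hard road.

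A t.~n.~e.\ is \emph{not} attached to a single orbit $E\in\mathcal{E}_\varnothing$ and a single component $C$ of $W_E\setminus Y_E$. It is attached to a coordinate subset $I\subset\{1,\dots,N\}$: one forms the constructible functions $F_{M,I}=\sum_{H\in\mathcal{E}_I}\pi_*\Phi_{M,H,H}$ on the torus $\CC^I$ (MacPherson pushforward under the toric projection $\pi$), and $s=\exp(2\pi i k/m)$ is declared a t.~n.~e.\ when the tropical fan $\Phi_{m,I}=\sum_{m\mid M}\pi_*\langle\Phi_{M,H,H}\rangle$ is nonzero in $K(\mathcal{C}^I)$. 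The whole point of the machinery of tropical characteristic classes is property~(4): $\pi_*\langle\Phi_{M,H,H}\rangle=\langle\pi_*\Phi_{M,H,H}\rangle$, so $\Phi_{m,I}=\bigl\langle\sum_{m\mid M}F_{M,I}\bigr\rangle$. Now the proof is one line: if this class is nonzero, the underlying constructible function $\sum_{m\mid M}F_{M,I}$ is nonzero at some $x_0\in\CC^I$ arbitrarily close to $0$; and Lemma~\ref{lacampo} plus A'Campo give $\zeta_{x_0}(t)=\prod_M(1-t^M)^{F_{M,I}(x_0)}$, so $s$ is a root or pole of $\zeta_{x_0}$.

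Both of your ``main obstacles'' dissolve in this formulation. The combinatorial non-cancellation you worry about is not something to be \emph{proved}: it is literally the hypothesis $\Phi_{m,I}\ne 0$, transported to a pointwise statement via the functoriality of $\langle\cdot\rangle$. And the geometric matching of Euler characteristics along a transverse slice is never needed: you do not restrict $\pi$ to a slice over a component $\Sigma$ of the singular locus and compare fibrations; you simply evaluate the MacPherson pushforward at a point of the coordinate torus $\CC^I$. The image $\pi(E)$ of a higher-codimension orbit is a coordinate plane of $\C^N$, and that — not a component of $\mathrm{Sing}\{f=0\}$ — is where $x_0$ lives. Your transverse-slice picture is not wrong in spirit, but it reinvents the MacPherson direct image by hand and thereby manufactures the difficulties you then list.
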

\begin{conjecture}[Constructive Denef--Loeser conjecture]\label{conjconstr} For every embedded toric resolution $\pi$ of a singularity $f:(\C^n,0)\to(\C,0)$ and every pole $q$ of the topological $\zeta$-function of $f$, we have $\exp{2\pi i q}\in M_{f,\pi}$.
\end{conjecture}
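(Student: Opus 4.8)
\medskip
\noindent\emph{Towards a proof.}\quad This being a conjecture, I only outline a strategy; the non-degenerate case (Conjecture~\ref{conjpoly}) is established for $n\leqslant 4$ in \cite{ELT}, and together with Theorem~\ref{main} this recovers the Denef--Loeser conjecture there. The plan is to compute the topological $\zeta$-function $Z^{\mathrm{top}}_f$ directly from the embedded toric resolution $\pi$ and then to match its residue at a pole $q$ with the datum that, by Definition~\ref{deftne}, certifies $\exp(2\pi i q)\in M_{f,\pi}$. First I would use property~3 of Definition~\ref{defemb} to view $\pi$, restricted to a neighbourhood of $\pi^{-1}(0)$ in the strict transform $W$ of $S$, as a log-resolution of $(\mathcal S, f^{-1}(0))$ of the type entering the Denef--Loeser formula. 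Then $Z^{\mathrm{top}}_f(s)$ becomes a finite sum, over the orbits $E\in\mathcal E_\varnothing$ and the connected components of $W_E\setminus Y_E$, of terms $\chi(\cdot)\cdot\prod_\rho(\nu_\rho+N_\rho s)^{-1}$, the product ranging over the rays $\rho$ of the cone defining $E$, where $(N_\rho,\nu_\rho)$ is the numerical data of $f$ along the divisor of $\rho$ and, in the toric setting, is a linear function of the primitive generator of $\rho$ determined by the fan of $Z$ and by $f$. This already displays $Z^{\mathrm{top}}_f$ as a function of precisely the data -- the combinatorics of $Z$ and the Euler characteristics of the components of $W_E\setminus Y_E$ -- on which $M_{f,\pi}$ depends.

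Next I would localise the pole. A number $q$ is a pole of $Z^{\mathrm{top}}_f$ exactly when the collection of cones $\sigma$ all of whose rays $\rho$ satisfy the candidate-pole relation $\nu_\rho+qN_\rho=0$ is non-empty and the alternating sum of the corresponding contributions does not vanish to the order needed to kill the pole at $s=q$. The residue of $Z^{\mathrm{top}}_f$ at $q$ is then a $\Z$-linear combination, indexed by such cones $\sigma$, of the Euler characteristics of the connected components of $W_{E_\sigma}\setminus Y_{E_\sigma}$, with coefficients that are intrinsic combinatorial invariants of how $\sigma$ sits inside the fan of $Z$. The heart of the argument is to recognise these coefficients as exactly the weights with which the same strata $W_{E_\sigma}\setminus Y_{E_\sigma}$ enter the A'Campo-type expression of Definition~\ref{deftne}; granting this, non-vanishing of the residue at $q$ forces $\exp(2\pi i q)\in M_{f,\pi}$, which is the claim. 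In the non-degenerate case, where the resolution is adapted to $N$ and the weights in Definition~\ref{deftne} are governed by the fiber polytopes of the faces of $N$, this reduces to expressing the numerator of $Z_N$ at the candidate pole $-\nu_\rho/N_\rho$ through those fiber polytopes -- i.e.\ the purely combinatorial Conjecture~\ref{conjpoly}.

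The main obstacle is the cancellation phenomenon: contributions of orbits $E$ of different codimensions, all contributing to the same candidate pole $q$, may cancel one another in $Z^{\mathrm{top}}_f$, and one must show that these cancellations happen in the residue of $Z^{\mathrm{top}}_f$ precisely when they happen in the alternating sum defining the t.n.e.\ set. This requires a sufficiently robust, codimension-by-codimension bookkeeping of ``the contribution of a stratum'' -- in the non-degenerate case a fiber-polytope refinement of the combinatorics of faces of $N$ -- that survives these cancellations in every dimension, and this is what currently keeps the conjecture open. For $n\leqslant 4$ the relevant fiber polytopes are low-dimensional and can be handled directly, as in \cite{ELT}; for the general statement the most promising route seems to be an induction on $\dim N$: peel off a face of $N$, invoke the lower-dimensional instance together with the A'Campo/Varchenko formula restricted to that face, and reassemble the resulting local data via the recursive structure of fiber polytopes.
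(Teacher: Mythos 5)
There is nothing in the paper to compare your argument against: the statement you address is stated as a conjecture (Conjecture \ref{conjconstr}), the paper offers no proof of it, and the only supporting evidence it records is that its polyhedral special case, Conjecture \ref{conjpoly}, is proved for $n\leqslant 4$ in \cite{ELT}, which together with Theorem \ref{main} yields the original Denef--Loeser conjecture for non-degenerate singularities in four variables. You say this correctly yourself, and your text is explicitly a strategy outline rather than a proof, so it should be judged as such: it is broadly in the spirit of what the paper (and \cite{ELT}) suggest, namely compute the topological zeta function from the resolution, localize a pole $q$, and show that its survival forces non-vanishing of the fan $\Phi_{m,I}$ of Definition \ref{deftne}.

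Two concrete soft spots in the outline are worth flagging. First, the residue of $Z^{\mathrm{top}}_f$ at $q$ involves the numerical data $(N_\rho,\nu_\rho)$, and in particular the discrepancies $\nu_\rho$ of $\pi$ restricted to $W$, whereas $M_{f,\pi}$ by construction depends only on the multiplicities $M$ of $f\circ\pi$ along strata and on Euler characteristics of the components of $W_E\setminus Y_E$; so the step ``recognise the residue coefficients as exactly the weights entering Definition \ref{deftne}'' cannot be a literal identification, and the whole difficulty of the conjecture is hidden in why non-vanishing of one expression should force non-vanishing of the other. Second, membership in $M_{f,\pi}$ is non-vanishing of a graded tropical fan $\Phi_{m,I}\in K(\mathcal{C}^I)$, not of a single alternating sum of Euler characteristics; the only components the paper makes explicit are codimension $0$ (the Varchenko/A'Campo datum) and codimension $1$ (fiber polyhedra, Theorem \ref{thfinal} and Corollary \ref{coroldim4}), and extending this bookkeeping to higher codimension is exactly the open Problem \ref{probldef}. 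So your plan is a reasonable research programme consistent with the paper's framework, but it does not prove the statement, which remains open beyond the cases settled in \cite{ELT}.
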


The proof of the theorem follows from the properties of tropical characteristic classes of analytic germs. Namely, the Euler characteristics of the strata $Y_E$ and $W_E$ for $E\in\mathcal{E}_\varnothing$ are related to nearby monodromy eigenvalues of $f$ via the tropical characteristic classes of the strata $Y_E$ and $W_E$ for $E\in\mathcal{E}$ as follows.

-- On one hand, using the adjunction property of tropical characteristic classes (Corollary \ref{schon}), we can express the tropical characteristic classes of the strata $Y_E$ and $W_E$ for $E\in\mathcal{E}$ in terms of the Euler characteristics of the strata $Y_E$ and $W_E$ for $E\in\mathcal{E}_\varnothing$.

-- On the other hand, applying the pushforward property of tropical characteristic classes (Definition \ref{deftcc}.4) to the projection $\pi$, we extract from the tropical characteristic classes of $Y_E$ and $W_E$ for $E\in\mathcal{E}$ the Euler characteristics of the fibers of these strata under the projection $\pi$ over a nearby point $x\in S$, and then, via the A'Campo formula, some monodromy eigenvalues at $x$.

\begin{remark} 1. We extract nearby monodromy eigenvalues from the topology of strata $E\in\mathcal{E}_\varnothing$ of arbitrary codimension, while the Varchenko \cite{V} and, more generally, A'Campo \cite{AC} formulas consume only codimension 1 strata of the exceptional divisor and produce only monodromy eigenvalues at the origin. Roughly speaking, we shall say that a t.~n.~e. has codimension $d$, if it is extracted from orbits of codimension at most $d+1$ (see Definition \ref{deftne} for details).

2. An embedded toric resolution is expected to exist for every singularity $f:(\C^n,0)\to(\C,0)$. Tevelev \cite{tev2} proved the existence of embedded toric resolutions in a somewhat weaker sense (omitting the assumption $i(0)=0$ of Definition \ref{defemb}, because this condition makes no sense in the global setting studied by Tevelev). Since the present work does not readily extend to the setting of arbitrary Hironaka resolutions, it would be interesting to extend Tevelev's theorem to our local setting.

3. Conjecture \ref{conjpoly} is a special case of Conjecture \ref{conjconstr} for non-degenerate singularities. However, restricting our attention only to non-degenerate singularities and their classical toric resolutions would not simplify the paper. In this sense embedded toric resolutions provide a natural setting for the study of t.~n.~e.

4. The first essential open question about t.~n.~e. is: whether the set $M_{f,\pi}$ coincides with the set of {\it all} nearby monodromy eigenvalues of $f$? More specifically: 

-- Is this true for a suitable resolution $\pi$ of every singularity $f$ (so that Theorem \ref{th15} is capable of computing all nearby monodromy eigenvalues)?

-- Is this wrong for {\it some} resolutions $\pi$ of sufficiently complicated singularities $f$ (making Conjecture \ref{conjconstr} strictly stronger than the original Denef--Loeser conjecture)?

The answer to the latter question can be (and expected to be) positive only starting from dimension $n=5$, so the question is non-trivial.


5. Despite the expectation that Conjectures \ref{conjpoly} and \ref{conjconstr} may be stronger than the original Denef--Loeser conjecture, in the non-degenerate case they happen to be simpler to prove than the original one.

6. It would be natural to expect that Conjectures \ref{conjpoly} and \ref{conjconstr} extend to the poles of local Igusa's p-adic zeta functions (see \cite{Ig}): for large $p$ and $k$, the poles of $Z_{f,\Phi}(s)=\int_{\Q^n_p} |f(x)|^s\Phi(x)|dx|$ with $\Phi=($the characteristic function of $p^k\Z^n_p)$ give rise to t.~n.~e. of $f$. The first opportunity to support this conjecture will be to prove it for non-degenerate singularities of four variables, extending \cite{ELT} to four variables in the same way as \cite{B-V} extends \cite{L-V} for three variables.

Yet more optimistically, one could expect that t.~n.~e. are enough for the motivic monodromy conjecture \cite{dl98}. 

7. In general, this paper is intended to be a step towards a tropical approach to the motivic monodromy conjecture. Another way to apply tropical geometry to the study of monodromy was presented earlier in a series of papers \cite{ks}, \cite{ks2} and \cite{s}, where the motivic Milnor fiber is studied in terms of tropical degenerations. In particular, this leads to a different refinement of the A'campo and Varchenko formulas for monodromy eigenvalues (describing the Jordan structure of the monodromy operator at the origin). However, the authors of \cite{ELT} were unable to extract the missing nearby monodromy eiganvalues from these results, so the present paper seems to be in a sense complementary to them.
\end{remark}

\subsection{Structure of the paper}

The definition of tropical nearby monodromy eigenvalues relies upon the notion of tropical fans and tropical characteristic classes of germs of complex analytic sets, which we introduce in Sections 2 and 3. Originally, tropical characteristic classes were introduced in \cite{E17} for algebraic sets with a view towards applications in enumerative geometry.
In Section 4 we adapt the proof of the existence of tropical characteristic classes to the case of analytic germs. 

In Section 5 we introduce tropical nearby monodromy eigenvalues and prove that they are indeed nearby monodromy eigenvalues. Then in Section \ref{sspecnewton} we specialize to the case of non-degenerate singularities. In Section 6 we use the calculus of fiber polyhedra to make the definition of tropical nearby monodromy eigenvalues of small codimension more explicit -- the result of this simplification (Corollary \ref{coroldim4}) is used in \cite{ELT} to prove Conjecture \ref{conjpoly} in dimension 4.

Extending the results of Section 6 to higher codimension boils down to certain positivity questions for fiber polyhedra, resembling dual defectivity (see Problem \ref{probldef} and subsequent remarks).

\begin{remark}
1. The reader that prefers to take for granted the existence of tropical characteristic classes of analytic sets may skip Sections \ref{ssmulttrop} and 4.

2. The reader interested only in the definition of t.n.e.'s in the setting of non-degenerate functions and their Newton polyhedra, can directly proceed from Section \ref{sspecnewton0} to Section \ref{sspecnewton}.
\end{remark}

\section{Local tropical fans}\label{schoice1}

\begin{conv}
There are two common versions of the definition of support faces and support functions: 

$\bullet$ A face $F$ of a polyhedron $N$ in a real vector space $V$ and a linear function $v\in V^*$ are said to support each other, if the restriction of $v$ to $N$ takes its {\it maximal} (or {\it minimal}) values at the points of $F$ (in this case, $F$ is denoted by $N^v$). 

$\bullet$ The support function of $N$ is the function $N(\cdot):V^*\to\R$, whose value at a linear function $v\in V^*$ is the {\it maximum} (or {\it minimum}) of $v$ restricted to $N$. 

$\bullet$ The dual cone $C^*\subset V^*$ to a given open cone $C\subset V$ is the set of all support vectors of the vertex $0\in C$, i.e. the ones taking {\it negative} (or {\it positive}) values on $C$ respectively.\end{conv}

The first convention agrees with the $(\max,+)$ notation in tropical geometry and is typical in the study of convex bodies and projective toric varieties. It however implies that the dual cone to the positive orthant in $\R^n$ is the negative orthant, so this object is sometimes called the polar cone to distinguish it from the conventional one. Due to this issue, the second convention is more common in the study of toric resolutions of singularities.

Since the present paper is at the border of the two fields, we never take a side: all the computations below are written in a way intended to be independent of this choice. The only exception is Section 4, where we choose the first (maximal/negative) convention to work with (quasi-)projective toric varieties.
\begin{definition}\label{defambient} An $n$-dimensional {\it ambient cone} $\mathcal{C}$ is a pair $(L,C)$, where $L\cong \Z^n$ is a lattice, and $C$ is an open convex rational polyhedral cone in the vector space $L\otimes\Q$. In the special case $C=L\otimes\Q$, the ambient cone will be denoted by the same letter $L$. If $C$ contains the kernel of a given lattice homomorphism $p$, then the image $p(\mathcal{C})$ is defined as the ambient cone $(p(L),p(C))$, otherwise $p(\mathcal{C})$ is not defined.\end{definition}

Now the readers familiar with the ring of tropical fans (see e.g. \cite{fs}, \cite{kaz}, \cite{sty}, 
\cite{tropfans}, \cite{mcst}) can proceed directly to Definition \ref{defloctrop} at the end of this section. For other readers, we 
introduce from scratch the notion of weighted and tropical fans in the context of ambient cones (along with some notation that we shall use later), and refer to the aforementioned papers for the proofs.

\subsection{Weighted fans} A $k$-dimensional {\it weighted pre-fan} $F$ in the ambient cone $\mathcal{C}$ is a finite union $S_F\subset C$ of disjoint 
$k$-dimensional relatively open convex rational polyhedral cones, endowed with a locally constant function $w_F:S_F\to\Q$. The set $S_F$ and the function $w_F$ are called the {\it support set} and the {\it weight function} of $F$.

A {\it weighted fan} is an equivalence class of pre-fans with respect to the minimal equivalence relation generated by the following two instances:

(1) pre-fans $F$ and $G$ are equivalent, if $S_F$ is an (open) dense subset of $S_G$, on which $w_F=w_G$.

(2) pre-fans $F$ and $G$ are equivalent, if the weights $w_F$ and $w_G$ are equal on the intersection $S_F\cap S_G$ and vanish outside of it.

\begin{example}
Informally speaking, this equivalence relation allows to change  the support set of the pre-fan by adding/removing $k$-dimensional piecewise-linear sets with zero weights or less than $k$-dimensional piecewise-linear sets with arbitrary weights. For instance, gray sets and numbers on Figure 1 represent support sets and weight functions of several equivalent pre-fans in the plane.
\end{example}
\begin{figure}
\centering\includegraphics{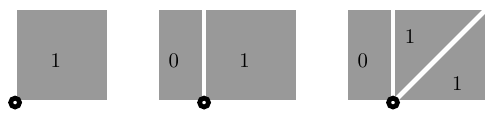}\skipfig{
\psscalebox{1.0 1.0} 
{
\begin{pspicture}(0,-0.88)(8.16,0.88)
\definecolor{colour0}{rgb}{0.6,0.6,0.6}
\psline[linecolor=white, linewidth=0.08, fillstyle=solid,fillcolor=colour0](0.12,0.84)(0.12,-0.76)(1.72,-0.76)(1.72,0.84)(0.12,0.84)
\psline[linecolor=white, linewidth=0.08, fillstyle=solid,fillcolor=colour0](3.32,0.84)(3.32,-0.76)(4.92,-0.76)(4.92,0.84)(3.32,0.84)
\psline[linecolor=white, linewidth=0.08, fillstyle=solid,fillcolor=colour0](2.52,0.84)(2.52,-0.76)(3.32,-0.76)(3.32,0.84)(2.52,0.84)
\psline[linecolor=white, linewidth=0.08, fillstyle=solid,fillcolor=colour0](6.52,0.84)(6.52,-0.76)(8.12,-0.76)(8.12,0.84)(6.52,0.84)
\psline[linecolor=white, linewidth=0.08, fillstyle=solid,fillcolor=colour0](5.72,0.84)(5.72,-0.76)(6.52,-0.76)(6.52,0.84)(5.72,0.84)
\psline[linecolor=white, linewidth=0.08](6.52,-0.76)(8.12,0.84)
\pscircle[linecolor=black, linewidth=0.08, dimen=inner](0.12,-0.76){0.04}
\pscircle[linecolor=black, linewidth=0.08, dimen=inner](6.52,-0.76){0.04}
\pscircle[linecolor=black, linewidth=0.08, dimen=inner](3.32,-0.76){0.04}
\rput[bl](0.72,-0.16){$1$}
\rput[bl](3.92,-0.16){$1$}
\rput[bl](6.72,0.24){$1$}
\rput[bl](7.52,-0.56){$1$}
\rput[bl](2.72,-0.16){$0$}
\rput[bl](5.92,-0.16){$0$}
\end{pspicture}
}}
\caption{several equivalent two-dimensional pre-fans in the plane}
\end{figure}

If a weighted fan $\mathcal{F}$ is represented by a pre-fan $F$, we shall sometimes write 
$w_\mathcal{F}$ instead of 
$w_F$, if the choice of the representative does not matter. The support set of a weighted fan $\mathcal{F}$, denoted by $S_\mathcal{F}$, or $\supp\mathcal{F}$, is the closure of the set defined by the inequality $w_\mathcal{F}\ne 0$.

\subsection{Operations with weighted fans} 
To define the product $q\cdot\mathcal{F}$ for a weighted fan $\mathcal{F}$ and a number $q\in\Q$, multiply the weight function of an arbitrary representative of $\mathcal{F}$ by $q$.
To define the sum of two weighted fans, choose their representatives $F$ and $G$ such that $S_F=S_G$, and define the sum as the equivalence class of the pre-fan $(S_F, w_F+w_G)$.

Equipped with these two operations, the set of all weighted fans in the $n$-dimensional ambient cone $\mathcal{C}=(L,C)$ is a vector space denoted by $Z_k(\mathcal{C})$. In particular, every $k$-dimensional rational polyhedral cone $P\subset C$, endowed with the unit weight, gives rise to a fan that will be denoted by the same letter $P\in Z_k(\mathcal{C})$. These fans (over all cones $P$) generate $Z_k(\mathcal{C})$. 

\begin{definition}\label{defprojfan}
An epimorphism of ambient cones $p(\mathcal{C})=\mathcal{C}'$ gives rise to the direct image linear map $p_*:Z_k(\mathcal{C})\to Z_k(\mathcal{C}')$, defined on every convex cone $P\subset C$ as follows:

-- if $\dim p(P)<k$, then we set $p_*(P)=0$.

-- if $\dim p(P)=k$, then, denoting the vector span of $P$ by $U$ and the lattice index $$\left|\frac{p(U)\cap p(L)}{p(U\cap L)}\right|$$ by $d$, we set $p_*(P)=d\cdot p(P)$.\end{definition}

\begin{example} See Figure \ref{picsum} for an example of the sum and the direct image of one-dimensional weighted fans in the plane.
\end{example}
\begin{figure}
\centering\includegraphics{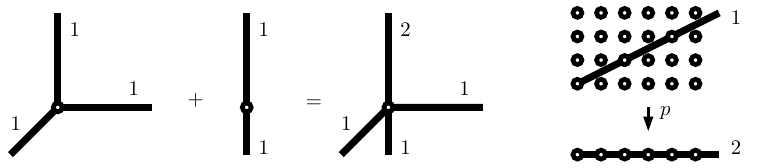}\skipfig{
\psscalebox{1.0 1.0} 
{
\begin{pspicture}(0,-1.32)(12.842426,1.32)
\rput[bl](1.0424263,0.8){$1$}
\rput[bl](2.0424263,-0.2){$1$}
\rput[bl](0.042426378,-0.8){$1$}
\rput[bl](12.242426,1.0){$1$}
\rput[bl](6.6424265,0.8){$2$}
\rput[bl](3.0424263,-0.4){$+$}
\psline[linecolor=black, linewidth=0.12](0.84242636,1.2)(0.84242636,-0.4)(2.4424264,-0.4)
\psline[linecolor=black, linewidth=0.12](0.84242636,-0.4)(0.042426378,-1.2)
\psline[linecolor=black, linewidth=0.12](4.0424266,1.2)(4.0424266,-1.2)
\psline[linecolor=black, linewidth=0.12](6.442426,1.2)(6.442426,-1.2)
\psline[linecolor=black, linewidth=0.12](8.042426,-0.4)(6.442426,-0.4)(5.6424265,-1.2)
\pscircle[linecolor=black, linewidth=0.08, fillstyle=solid, dimen=inner](0.84242636,-0.4){0.04}
\pscircle[linecolor=black, linewidth=0.08, fillstyle=solid, dimen=inner](4.0424266,-0.4){0.04}
\pscircle[linecolor=black, linewidth=0.08, fillstyle=solid, dimen=inner](6.442426,-0.4){0.04}
\psline[linecolor=black, linewidth=0.12](9.6424265,0.0)(12.042426,1.2)
\psline[linecolor=black, linewidth=0.12](9.6424265,-1.2)(12.042426,-1.2)
\pscircle[linecolor=black, linewidth=0.08, fillstyle=solid, dimen=inner](9.6424265,0.0){0.04}
\pscircle[linecolor=black, linewidth=0.08, fillstyle=solid, dimen=inner](10.442427,0.4){0.04}
\pscircle[linecolor=black, linewidth=0.08, fillstyle=solid, dimen=inner](11.242426,0.8){0.04}
\pscircle[linecolor=black, linewidth=0.08, fillstyle=solid, dimen=inner](9.6424265,-1.2){0.04}
\pscircle[linecolor=black, linewidth=0.08, fillstyle=solid, dimen=inner](10.042426,-1.2){0.04}
\pscircle[linecolor=black, linewidth=0.08, fillstyle=solid, dimen=inner](10.442427,-1.2){0.04}
\pscircle[linecolor=black, linewidth=0.08, fillstyle=solid, dimen=inner](10.842426,-1.2){0.04}
\pscircle[linecolor=black, linewidth=0.08, fillstyle=solid, dimen=inner](11.242426,-1.2){0.04}
\pscircle[linecolor=black, linewidth=0.08, fillstyle=solid, dimen=inner](11.6424265,-1.2){0.04}
\pscircle[linecolor=black, linewidth=0.08, fillstyle=solid, dimen=inner](10.042426,0.0){0.04}
\pscircle[linecolor=black, linewidth=0.08, fillstyle=solid, dimen=inner](9.6424265,0.8){0.04}
\pscircle[linecolor=black, linewidth=0.08, fillstyle=solid, dimen=inner](10.042426,0.4){0.04}
\pscircle[linecolor=black, linewidth=0.08, fillstyle=solid, dimen=inner](10.442427,0.0){0.04}
\pscircle[linecolor=black, linewidth=0.08, fillstyle=solid, dimen=inner](9.6424265,1.2){0.04}
\pscircle[linecolor=black, linewidth=0.08, fillstyle=solid, dimen=inner](10.042426,0.8){0.04}
\pscircle[linecolor=black, linewidth=0.08, fillstyle=solid, dimen=inner](10.442427,0.4){0.04}
\pscircle[linecolor=black, linewidth=0.08, fillstyle=solid, dimen=inner](10.842426,0.0){0.04}
\pscircle[linecolor=black, linewidth=0.08, fillstyle=solid, dimen=inner](9.6424265,0.4){0.04}
\pscircle[linecolor=black, linewidth=0.08, fillstyle=solid, dimen=inner](10.042426,1.2){0.04}
\pscircle[linecolor=black, linewidth=0.08, fillstyle=solid, dimen=inner](10.442427,0.8){0.04}
\pscircle[linecolor=black, linewidth=0.08, fillstyle=solid, dimen=inner](10.842426,0.4){0.04}
\pscircle[linecolor=black, linewidth=0.08, fillstyle=solid, dimen=inner](11.242426,0.0){0.04}
\pscircle[linecolor=black, linewidth=0.08, fillstyle=solid, dimen=inner](10.442427,1.2){0.04}
\pscircle[linecolor=black, linewidth=0.08, fillstyle=solid, dimen=inner](10.842426,0.8){0.04}
\pscircle[linecolor=black, linewidth=0.08, fillstyle=solid, dimen=inner](11.242426,0.4){0.04}
\pscircle[linecolor=black, linewidth=0.08, fillstyle=solid, dimen=inner](11.6424265,0.0){0.04}
\pscircle[linecolor=black, linewidth=0.08, fillstyle=solid, dimen=inner](11.6424265,0.4){0.04}
\pscircle[linecolor=black, linewidth=0.08, fillstyle=solid, dimen=inner](10.842426,1.2){0.04}
\pscircle[linecolor=black, linewidth=0.08, fillstyle=solid, dimen=inner](11.242426,1.2){0.04}
\pscircle[linecolor=black, linewidth=0.08, fillstyle=solid, dimen=inner](11.6424265,0.8){0.04}
\pscircle[linecolor=black, linewidth=0.08, fillstyle=solid, dimen=inner](11.6424265,1.2){0.04}
\psline[linecolor=black, linewidth=0.06, arrowsize=0.05291667cm 2.0,arrowlength=1.4,arrowinset=0.0]{->}(10.842426,-0.4)(10.842426,-0.8)
\rput[bl](4.2424264,0.8){$1$}
\rput[bl](4.2424264,-1.2){$1$}
\rput[bl](6.6424265,-1.2){$1$}
\rput[bl](7.6424265,-0.2){$1$}
\rput[bl](5.6424265,-0.8){$1$}
\rput[bl](5.0424266,-0.4){$=$}
\rput[bl](12.242426,-1.2){$2$}
\rput[bl](11.042426,-0.6){$p$}
\end{pspicture}
}}
\caption{the sum and the direct image of one-dimensional fans}\label{picsum}
\end{figure}

The localization $P_v$ of a cone $P$ at $v\in L$ is the unique cone that is invariant under the shift by $v$ and coincides with $P$ in a small neighborhood of $v$ (equivalently, $P_v$ is generated by $P$ and $-v$ if $v\in\bar P$, and $P_v=\varnothing$ otherwise). This operation extends by linearity to the localization $\mathcal{F}_v\in Z_k(\mathcal{C}_v)$ of a weighted fan $\mathcal{F}\in Z_k(\mathcal{C})$, where $\mathcal{C}_v=(L,C_v)$ is the localization of the ambient cone $\mathcal{C}=(L,C)$.

\begin{example} 
The second summand on Figure \ref{picsum} is the localization of the first one at $v=(0,1)$, while the localization at $v=(0,-1)$ is empty. \end{example}

\subsection{Dual fans of polyhedra}\label{sspecnewton0} An important example of tropical fans and their products comes from dual fans of lattice polyhedra. Recall that the Minkowski sum $P+Q$ for convex sets $P$ and $Q$ is defined as $\{p+q\,|\, p\in P,\, q\in Q\}$, and consider a lattice polyhedron $N\subset L^*\otimes\Q$ of the form $C^*+($bounded set$)$ in the dual space to the ambient cone $\mathcal{C}=(L,C)$. 
\begin{definition}\label{defdual} The $k$-th dual fan $[N]^k\in K^k(\mathcal{C})$ is defined by a pre-fan $F$ such that $S_F$ is the set of all vectors $v\in L\otimes\Q$, whose support face $N^v\subset N$ is bounded and $k$-dimensional, and $w_F(v)$ is the $k$-dimensional lattice volume of the face $N^v$. \end{definition} 

Recall that the lattice volume is the volume form on a $k$-dimensional vector subspace $U\subset\Q^n$ such that the volume of $U/(U\cap\Z^n)$ equals $k!$; the latter factor assures that the lattice volume of every lattice polytope is an integer.  
See Figure \ref{picdual} for an example of a dual fan.

\begin{figure}\centering\includegraphics{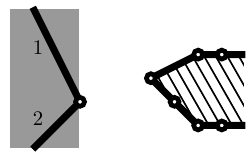}\skipfig{
\psscalebox{1.0 1.0} 
{
\begin{pspicture}(0,-1.2346193)(4.04,1.2346193)
\definecolor{colour0}{rgb}{0.6,0.6,0.6}
\psline[linecolor=white, linewidth=0.04, fillstyle=solid,fillcolor=colour0](1.22,1.2078072)(1.22,-1.1921928)(0.02,-1.1921928)(0.02,1.2078072)(1.22,1.2078072)
\psline[linecolor=black, linewidth=0.12](0.42,1.2078072)(1.22,-0.39219284)
\psline[linecolor=black, linewidth=0.12](0.42,-1.1921928)(1.22,-0.39219284)
\pscircle[linecolor=black, linewidth=0.08, fillstyle=solid, dimen=inner](1.22,-0.39219284){0.04}
\psline[linecolor=white, linewidth=0.04, fillstyle=vlines, hatchwidth=0.028222222, hatchangle=30.0, hatchsep=0.1411111](4.02,0.40780717)(3.22,0.40780717)(2.42,0.0078071593)(3.22,-0.7921928)(4.02,-0.7921928)(4.02,0.40780717)
\psline[linecolor=black, linewidth=0.12](4.02,0.40780717)(3.22,0.40780717)(2.42,0.0078071593)(3.22,-0.7921928)(4.02,-0.7921928)
\pscircle[linecolor=black, linewidth=0.08, fillstyle=solid, dimen=inner](3.22,0.40780717){0.04}
\pscircle[linecolor=black, linewidth=0.08, fillstyle=solid, dimen=inner](2.82,-0.39219284){0.04}
\pscircle[linecolor=black, linewidth=0.08, fillstyle=solid, dimen=inner](2.42,0.0078071593){0.04}
\pscircle[linecolor=black, linewidth=0.08, fillstyle=solid, dimen=inner](3.62,-0.7921928){0.04}
\pscircle[linecolor=black, linewidth=0.08, fillstyle=solid, dimen=inner](3.62,0.40780717){0.04}
\pscircle[linecolor=black, linewidth=0.08, fillstyle=solid, dimen=inner](3.22,-0.7921928){0.04}
\rput[bl](0.42,0.40780717){$1$}
\rput[bl](0.42,-0.7921928){$2$}
\end{pspicture}
}}
\caption{the gray cone $C$, the hatched polyhedron $N=C^*+($a bounded set$)$, and its dual fan $[N]$}\label{picdual}
\end{figure}

\begin{remark}\label{remembed} 
1. In the next subsection we introduce a subclass of weighted fans called tropical fans and define their multiplication, so that the fan $[N]^k$ is tropical and equals the product of $k$ copies of $[N]^1=:[N]$ (hence the notation).
In particular, if the polyhedron $N$ is bounded, then $[N]^n\in K_0(L)=\Q$ equals the lattice volume of $N$. More generally, the product of fans $[N_1],\ldots,[N_n]$ is the mixed volume of the polytopes $N_1,\ldots,N_n$.  

2. Note that $[N_1+N_2]=[N_1]+[N_2]$, and a polyhedron $N$ is uniquely determined by its dual fan $[N]$, so the homomorphism map $N\mapsto[N]$ embeds the semigroup of polyhedra into the group of codimension 1 tropical fans $K^1(\mathcal{C})$. Moreover, the latter is the Grothendieck group of the former.

3. At this point the reader interested in the definition of t.n.e.'s of polyhedra may proceed to Section\ref{sspecnewton}.
\end{remark}

\subsection{Ring of tropical fans}\label{ssmulttrop} We introduce subspaces of {\it local tropical fans} $K_k(\mathcal{C})\subset Z_k(\mathcal{C})$ such that the sum $\bigoplus_{k=0}^n K_k(\mathcal{C})=K(\mathcal{C})$ has a natural ring structure. 
\begin{remark} In what follows, the notion of tropical fans and their ring structure is essentially used only in Section 4 to prove the existence of tropical characteristic classes.
\end{remark}

Let us start with the classical special case $\mathcal{C}=(\Z^n,\Q^n)$.
A $k$-dimensional fan $\mathcal{F}\in Z_{k}(\Z^n,\Q^n)$ is said to be tropical, if, for every epimorphism $p:\Z^n\to\Z^k$, the fan $p(\mathcal{F})\in Z_{k}(\Z^k,\Q^k)$ is a multiple of $\Q^k$. The space of all $k$-dimensional tropical fans is denoted by $$K_{k}(\Z^n,\Q^n)=K^{n-k}(\Z^n,\Q^n).$$ If $\mathcal{F}_i\in K^{k_i}(\Z^n,\Q^n)$ are tropical fans such that $\sum_{i=1}^N k_i=n$, then their Descartes product $$\mathcal{F}_1\times\ldots\times\mathcal{F}_N\in Z^{n}(\Z^{n}\times\ldots\times\Z^{n},\Q^{n}\times\ldots\times\Q^{n})$$ is also a tropical fan, so its direct image under the projection along the diagonal $$\Q^{Nn}\to\Q^{(N-1)n}$$ equals $d\cdot\Q^{(N-1)n}$. This number $d\in\Q$ is called the intersection number of $\mathcal{F}_i$ and is denoted by $\mathcal{F}_1\circ\ldots\circ\mathcal{F}_k$. This pairing in the space $$K(\Z^n,\Q^n)=\bigoplus_{k=0}^n  K^{k}(\Z^n,\Q^n)$$ 
extends to the unique commutative associative multiplication $$\cdot\;:\;K^{k}(\Z^n,\Q^n)\times K^{m}(\Z^n,\Q^n)\to K^{k+m}(\Z^n,\Q^n)$$ such that $$(\mathcal{F}\cdot\mathcal{G})\circ\mathcal{H}=\mathcal{F}\circ\mathcal{G}\circ\mathcal{H}$$
for all $\mathcal{F}, \mathcal{G}$ and $\mathcal{H}$.

We now exted to our context a more constructive way \cite{fs} to define tropical fans and their products, which is more convenient when it comes to explicit computations. 

\begin{definition}[Tropicality via balancing condition]
I. Let $\mathcal{F}\in Z_1(\Z^n,\Q^n)$ be a 1-dimensional weighted fan, i.e. $\supp\mathcal{F}$ consists of finitely many rays, generated by primitive lattice vectors $v_1,\ldots,v_N$, and the weight function $w_\mathcal{F}$ takes the value $w_i$ at $v_i$. Then $\mathcal{F}$ is said to be tropical, if it satisfies the balancing condition $$\sum_i w_i v_i=0.$$

II. A weighted fan $\mathcal{F}\in Z_k(\Z^n,\Q^n)$ is said to be a {\it book}, if $\Z^n$ 
admits a decomposition $L_1\oplus L_2$ such that $\mathcal{F}=\mathcal{F}_1\times\mathcal{F}_2$, where $\mathcal{F}_1\in Z_1(L_1)$ is 1-dimensional, and $\mathcal{F}_2\in Z_{k-1}(L_2)$ is the whole space $L_2\otimes\Q$ (see Figure \ref{picbook}). Then $\mathcal{F}$ is said to be tropical, if $\mathcal{F}_1$ is.

III. An arbitrary weighted fan $\mathcal{F}\in Z_k(\mathcal{C})$ is said to be tropical, if, for every point $v\in C$, such that the localization $\mathcal{F}_v\in Z_k(\Z^n,\Q^n)$ is a book, this book is tropical. Tropical fans form a vector subspace $K_k(\mathcal{C})\subset Z_k(\mathcal{C})$. 
\end{definition}

\begin{figure}
\centering\includegraphics{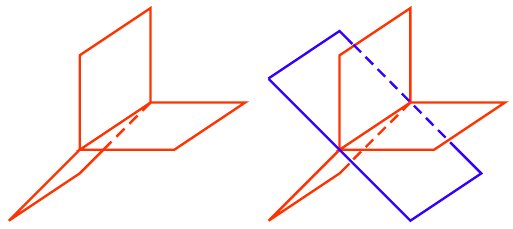}\skipfig{
\psscalebox{1.0 1.0} 
{
\begin{pspicture}(0,-1.8315656)(8.48044,1.8315656)
\definecolor{colour0}{rgb}{1.0,0.2,0.0}
\definecolor{colour1}{rgb}{0.2,0.0,1.0}
\psline[linecolor=colour0, linewidth=0.04](1.2141422,-0.605821)(1.2141422,0.994179)(2.4141421,1.794179)(2.4141421,0.194179)(1.2141422,-0.605821)(2.8141422,-0.605821)(4.014142,0.194179)(2.4141421,0.194179)
\psline[linecolor=colour0, linewidth=0.04](1.2141422,-0.605821)(0.014142151,-1.805821)(1.2141422,-1.005821)(1.6141422,-0.605821)
\psline[linecolor=colour0, linewidth=0.04, linestyle=dashed, dash=0.17638889cm 0.10583334cm](1.6141422,-0.605821)(2.4141421,0.194179)
\psline[linecolor=colour0, linewidth=0.04](5.614142,-0.605821)(5.614142,0.994179)(6.814142,1.794179)(6.814142,0.194179)(5.614142,-0.605821)(7.2141423,-0.605821)(8.414143,0.194179)(6.814142,0.194179)
\psline[linecolor=colour0, linewidth=0.04](5.614142,-0.605821)(4.414142,-1.805821)(5.614142,-1.005821)(5.782258,-0.8377051)
\psline[linecolor=colour0, linewidth=0.04, linestyle=dashed, dash=0.17638889cm 0.10583334cm](5.8460264,-0.76813984)(6.814142,0.194179)
\psline[linecolor=colour1, linewidth=0.04](4.414142,0.594179)(6.814142,-1.805821)(8.014142,-1.005821)(7.614142,-0.605821)
\psline[linecolor=colour1, linewidth=0.04, linestyle=dashed, dash=0.17638889cm 0.10583334cm](7.614142,-0.605821)(5.8502812,1.1731585)
\psline[linecolor=colour1, linewidth=0.04](4.408345,0.5999761)(5.614142,1.4057732)(5.8314986,1.1854833)
\end{pspicture}
}}
\caption{a book and a bookcrossing}\label{picbook}
\end{figure}

\begin{definition}[Intersection number via displacement rule]\label{definters}
I. If $\Q^n$ is decomposed into the direct sum of two subspaces $U_1\oplus U_2$, we call a pair of tropical fans of the form $q_1\cdot U_1$ and $q_2\cdot U_2$ a {\it crossing}, and define the intersection number of these fans as $$q_1 q_2 \left|\frac{\Z^n}{(U_1\cap \Z^n)+(U_2\cap \Z^n)}\right |.$$

II. Let $\mathcal{F}$ and $\mathcal{G}\in K(\Z^n,\Q^n)$ be tropical fans of complementary dimension. Then, for almost every $u\in \Q^n$, 
the shifted support sets $\supp\mathcal{F}+u$ and $\supp\mathcal{G}$ intersect transversally, i.e. the intersection is a finite set $\{v_1,\ldots,v_N\}$, and the localizations $\mathcal{F}_{v_i-u}$ and $\mathcal{G}_{v_i}$ for every $i=1,\ldots,N$ form a crossing $C_i$ (or, informally speaking, the ``shifted fans'' $\mathcal{F}+u$ and $\mathcal{G}$ in a small neighborhood of the intersection point $v_i$ coincide with the crossing $C_i$ up to a shift). Then the local intersection number of $\mathcal{F}_{v_i-u}$ and $\mathcal{G}_{v_i}$ at $v_i$ is defined as the intersection number of the crossing $C_i$ (see I), and the total intersection number $$\mathcal{F}\circ\mathcal{G}$$ is defined as the sum of the local ones. Tropicality of $\mathcal{F}$ and $\mathcal{G}$ ensures that the result is independent of the choice of the displacement $u$ (see Figure \ref{picind} for an example).
\end{definition}

\begin{figure}\centering\includegraphics{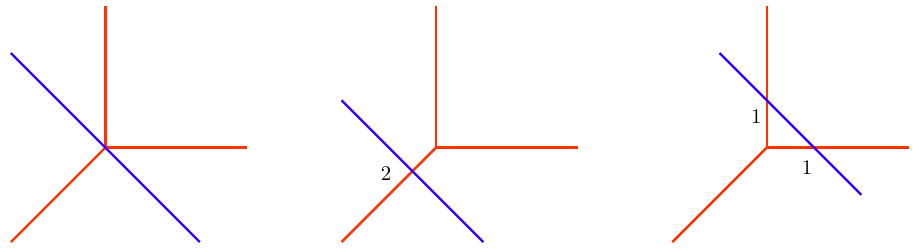}\skipfig{
\psscalebox{0.75 0.75} 
{
\begin{pspicture}(0,-2.0100372)(15.248464,2.0100372)
\definecolor{colour0}{rgb}{1.0,0.2,0.0}
\definecolor{colour1}{rgb}{0.2,0.0,1.0}
\psline[linecolor=colour0, linewidth=0.04](1.6484637,2.0100372)(1.6484637,-0.38996276)(4.048464,-0.38996276)
\psline[linecolor=colour0, linewidth=0.04](1.6484637,-0.38996276)(0.048463747,-1.9899628)(0.048463747,-1.9899628)(0.048463747,-1.9899628)(0.048463747,-1.9899628)(0.048463747,-1.9899628)(0.048463747,-1.9899628)(0.048463747,-1.9899628)(0.048463747,-1.9899628)(0.048463747,-1.9899628)(0.048463747,-1.9899628)(0.048463747,-1.9899628)(0.048463747,-1.9899628)(0.048463747,-1.9899628)
\psline[linecolor=colour0, linewidth=0.04](0.048463747,-1.9899628)(0.048463747,-1.9899628)(0.048463747,-1.9899628)
\psline[linecolor=colour1, linewidth=0.04](0.048463747,1.2100372)(3.2484636,-1.9899628)
\psline[linecolor=colour0, linewidth=0.04](7.2484636,2.0100372)(7.2484636,-0.38996276)(9.648464,-0.38996276)
\psline[linecolor=colour0, linewidth=0.04](7.2484636,-0.38996276)(5.6484637,-1.9899628)(5.6484637,-1.9899628)(5.6484637,-1.9899628)(5.6484637,-1.9899628)(5.6484637,-1.9899628)(5.6484637,-1.9899628)(5.6484637,-1.9899628)(5.6484637,-1.9899628)(5.6484637,-1.9899628)(5.6484637,-1.9899628)(5.6484637,-1.9899628)(5.6484637,-1.9899628)(5.6484637,-1.9899628)
\psline[linecolor=colour0, linewidth=0.04](5.6484637,-1.9899628)(5.6484637,-1.9899628)(5.6484637,-1.9899628)
\psline[linecolor=colour1, linewidth=0.04](5.6484637,0.41003722)(8.048464,-1.9899628)
\psline[linecolor=colour0, linewidth=0.04](12.848464,2.0100372)(12.848464,-0.38996276)(15.248464,-0.38996276)
\psline[linecolor=colour0, linewidth=0.04](12.848464,-0.38996276)(11.248464,-1.9899628)(11.248464,-1.9899628)(11.248464,-1.9899628)(11.248464,-1.9899628)(11.248464,-1.9899628)(11.248464,-1.9899628)(11.248464,-1.9899628)(11.248464,-1.9899628)(11.248464,-1.9899628)(11.248464,-1.9899628)(11.248464,-1.9899628)(11.248464,-1.9899628)(11.248464,-1.9899628)
\psline[linecolor=colour0, linewidth=0.04](11.248464,-1.9899628)(11.248464,-1.9899628)(11.248464,-1.9899628)
\psline[linecolor=colour1, linewidth=0.04](12.048464,1.2100372)(14.448463,-1.1899627)
\rput[bl](6.31513,-0.9396131){2}
\rput[bl](12.583662,0.024489446){1}
\rput[bl](13.441005,-0.85103506){1}
\end{pspicture}
}}
\caption{two tropical fans with unit weights and the intersection number 2}\label{picind}
\end{figure}

\begin{definition}[Multiplication via intersection numbers]\label{defmultconstr}
I. A pair of tropical fans $\mathcal{F}$ and $\mathcal{G}\in K(\Z^n,\Q^n)$ is said to be a {\it bookcrossing}, if  $\Z^n$ admits a decomposition $L_1\oplus L_2$ such that $\mathcal{F}=\mathcal{F}_1\times\mathcal{F}_2$ and $\mathcal{G}=\mathcal{G}_1\times\mathcal{G}_2$, where $\mathcal{F}_1$ and $\mathcal{G}_1\in K(L_1)$ have complementary dimension, and $\mathcal{F}_2=\mathcal{G}_2\in K(L_2)$ is the whole space $L_2\otimes\Q$ (see Figure \ref{picbook}). Then the product $$\mathcal{F}\cdot\mathcal{G}$$ is defined as the subspace $L_2\otimes\Q\subset \Q^n$, endowed with the constant weight $\mathcal{F}_1\circ\mathcal{G}_1$. 

For example, according to the computation on Figure \ref{picind}, the product of the bookcrossing on Figure \ref{picbook} equals the intersection line equipped with the weight 2.

II. The product of arbitrary fans $\mathcal{F}$ and $\mathcal{G}\in K(\mathcal{C})$ in the ambient cone $\mathcal{C}=(L,C)$ is defined as the fan $\mathcal{H}\in K(\mathcal{C})$ such that, for every $v\in C$,

$\bullet$ if the localizations $\mathcal{F}_v$ and $\mathcal{G}_v$ form a bookcrossing, then $$\mathcal{H}_v=\mathcal{F}_v\cdot\mathcal{G}_v,$$ where the right hand side is defined in (I). In particular,
the weight $w_\mathcal{H}(v)$ equals the intersection product from (I).

$\bullet$ otherwise $v\notin \supp\mathcal{H}$.
\end{definition}

\subsection{Local tropical fans} We have extended the notion of weighted and tropical fans in the space $\Q^n$ to fans in an arbitrary ambient cone $\mathcal{C}=(L,C)$. In brief, this generalization can be summarized as follows.
\begin{definition} \label{defloctrop} 1. Let $Z_k(L)$ be the vector space of $k$-dimensional weighted fans in the lattice $L$. 
Every fan $\mathcal{F}\in Z_k(L)$ uniquely decomposes as $$\mathcal{F}=\mathcal{F}_1+\mathcal{F}_2,$$ where

$\bullet\quad\dim\;\supp(\mathcal{F}_1)\setminus C<k$, i.e., informally speaking, $\mathcal{F}_1$ is the intersection of $\mathcal{F}$ with the cone $C$.

$\bullet\quad\supp(\mathcal{F}_2)\cap C=\varnothing$, i.e., informally speaking, $\mathcal{F}_2$ is the intersection of $\mathcal{F}$ with the complement to $C$.

We denote the corresponding decomposition of $Z_k(L)$ by $$Z_k(\mathcal{C})\oplus O_k(\mathcal{C}).$$
The first summand is referred to as the {\it space of weighted fans in the ambient cone} $\mathcal{C}$.

2. Let $K(L)\subset Z(L)=\bigoplus_{k}Z_k(L)$ be the ring of tropical fans in the lattice $L$ (i.e. weighted fans satisfying the balancing condition). Its image under the projection $Z(L)\to Z(\mathcal{C})$ will be denoted by $K(\mathcal{C})$. The operation of multiplication of tropical fans survives this projection, and the resulting ring $K(\mathcal{C})$ is referred to as the {\it ring of tropical fans in the ambient cone} $\mathcal{C}$.
\end{definition}
\begin{remark} 1. The set of all $k$-dimensional tropical fans in $K(\mathcal{C})$ will be denoted by $K_k(\mathcal{C})$ or $K^{n-k}(\mathcal{C})$, so that we have the graded ring $K(\mathcal{C})=\bigoplus_{m=0}^n K^{m}(\mathcal{C})$.

2. An epimorphism of ambient cones $p(\mathcal{C})=\mathcal{C}'$ gives rise to the linear map $p_*:K_k(\mathcal{C})\to K_k(\mathcal{C}')$, induced by taking the direct image of tropical fans.

3. Let $C_v$ be the localization of the cone $C$ at $v\in L$, i.e. the cone generated by $C$ and $-v$. The operation of localization of tropical fans induces a ring homomorphism $K(\mathcal{C})\to K(\mathcal{C}_v)$, the localization of a tropical fan $\mathcal{F}\in K_k(\mathcal{C})$ is denoted by $\mathcal{F}_v\in K_k(\mathcal{C}_v)$.

4. The ring $K(\Z^n,\Q^n)$ is the conventional ring of tropical fans, serving as a combinatorial model for the ring of conditions of the complex torus $\CC^n$, i.e. the direct limit of the singular cohomology rings of all toric compactifications of the complex torus $\CC^n$, see \cite{dcp}. The ring $K(\Z^n,\Q^n_+)$ similarly reflects the intersection theory of germs of analytic sets in $(\C^n,0)$; more generally, if $C$ is a pointed cone, then the ring $K(\mathcal{C})$ reflects the intersection theory of germs of analytic sets in the affine toric variety of the cone $C$. The precise general statement is given in the next section.
\end{remark}

\section{Local tropical characteristic classes}

\subsection{Tropicalization} We introduce the operation of tropicalization, assigning local tropical fans to germs of analytic sets. 
The results and constructions are generally similar to the more well known global case of algebraic sets (see e.g. \cite{ekl}, \cite{tev1}, \cite{sty}, \cite{mcst}), and the proofs can be found in the literature devoted to the analytic setting (see e.g. \cite{local1}, \cite{local2} and especially \cite{local3}). 
\begin{remark} If a reader motivated by the monodromy conjectures is primarily interested in the polynomial case, it is enough to assume that all analytic germs below are given by polynomials, adopting the original polynomial setting. 
\end{remark}

We shall work with toric varieties (see e.g. \cite{toric1}) and their fans, which we sometimes call toric fans to avoid confusion with weighted and tropical fans. Recall that a toric fan is a collection of non-overlapping relatively open rational polyhedral cones, closed with respect to taking faces.
\begin{definition}\label{defcfan} A toric fan is called a $\mathcal{C}$-fan for an ambient cone $\mathcal{C}=(L,C)$ (Definition \ref{defambient}), if the union of its cones equals the closure of the cone $C$, and is said to be simple, if every its $n$-dimensional cone is generated by a basis of the lattice $L$. \end{definition}

Let $\Sigma$ be a simple $\mathcal{C}$-fan, and let $X_\Sigma$ be the corresponding toric variety. Identify the character lattice of its dense complex torus $T\cong\CC^n$ with the dual to the lattice $L$ of the ambient cone. Let $X^c_\Sigma$ be the union of the precompact orbits of $X_\Sigma$ (i.e. the orbits corresponding to the cones of $\Sigma$ outside the boundary of $C$). We call $X^c_\Sigma$ the compact part of $X_\Sigma$ (and it is indeed compact). 
\begin{example}\label{exatoricpair} Assume that $C=\{a_1<0\}$ in the plane with the standard coordinates $(a_1,a_2)$. Consider the $\mathcal{C}$-fan $\Sigma$ with two 2-dimensional cones equal to the two coordinate quadrants in $C$. The corresponding toric variety $X_\Sigma$ is the product of $\CP^1$ with the standard coordinates $(u:v)$ and $\C^1$ with the standard coordinate $x$. Its compact part $X^c_\Sigma$ is the projective line $\CP^1\times\{0\}$.
\end{example}
\begin{definition} \label{deftropsn} 1) A $\mathcal{C}$-germ of an analytic set is the intersection of $T$ with a germ of an analytic set in the pair $(X_\Sigma, X^c_\Sigma)$ for a $\mathcal{C}$-fan $\Sigma$ (the definition is independent of the choice of $\Sigma$).

2) Assume that the closure of a $\mathcal{C}$-germ $P$ in $X_\Sigma$ of the pure dimension $d$ intersects the precompact (and hence all) orbits $O\subset X_\Sigma$ properly (so that $\codim O\cap \bar P=\codim O+\codim P$). Then the compactification $(X_\Sigma, X^c_\Sigma)$ is said to be tropical for $P$.

3) Let $C_i$ be the (relatively open) interior $d$-dimensional cones in the fan $\Sigma$ of a tropical compactification $(X_\Sigma, X^c_\Sigma)$ of $P$. Let $m_i$ be the intersection number of the orbit corresponding to $C_i$ and the closure of $P$ (it is correctly defined, because the intersection is 0-dimensional by the definition of a tropical compactification). Then the linear combination of weighted fans $\sum_i m_i\cdot C_i$ is a tropical fan in $K_d(\mathcal{C})$, called the tropicalization of $P$ and denoted by $\Trop P$. 
\end{definition}

\subsection{Newton polyhedra} We shall illustrate this definition with some examples in the important special case of a hypersurface, when the germ $P$ is given by one analytic equation $$\sum\nolimits_{a\in(\mbox{\scriptsize a finite subset of }\Z^n)+C^*} c_ax^a=0,\eqno{(*)}$$ where  $x^a$ stands for the monomial $x_1^{a_1}\ldots x_n^{a_n}$. 
\begin{definition}\label{defnewt} 1) A germ of an analytic function given by the series $(*)$ converging for all $x\in T$ in a neighborhood of the compact part of a toric variety $X^c_\Sigma$ for a $\mathcal{C}$-fan $\Sigma$ is called a $\mathcal{C}$-germ of an analytic function. Its Newton polyhedron is defined as the convex hull of $C^*+\{a\,|\, c_a\ne 0\}$.

2) A $\mathcal{C}$-fan is said to be compatible with the Newton polyhedron $N$, if, for any two vectors $u$ and $v$ from the same (relatively open) cone of the $\mathcal{C}$-fan, their support faces are equal: $N^u=N^v$ (or, equivalently, if the support function of the polyhedron is linear at every cone of the fan and is not defined outside).
\end{definition}
Recall that every lattice polyhedron is compatible with some simple fan (\cite{toroidal}).
\begin{example}\label{exanewt1} 1. If $\mathcal{C}=(\Z^n,\Q^n)$, we obtain the usual notion of the Newton polytope of a polynomial. 

2. If $\mathcal{C}=(\Z^n,\Q_+^n)$, we obtain the usual notion of the Newton polyhedron of a germ of an analytic function $(\C^n,0)\to(\C,0)$. 

3. For any $\mathcal{C}$, the tropicalization of the hypersurface $(*)$ is the (first) dual fan $[N]$ of its Newton polyhedron $N$ (Definition \ref{defdual}). In particular, a compactification $(X_\Sigma, X^c_\Sigma)$ is tropical for the hypersurface $(*)$ if and only if $\Sigma$ is compatible with $N$.

4. For instance, assume $C$ is as on Figure \ref{picdual}, and $(x,y)$ are the standard coordinates on $T=\CC^2$. Then, for any toric $\mathcal{C}$-fan $\Sigma$, the coordinate function $x$ extends to a regular function $x$ on the toric surface $X_\Sigma$, and the compact part $X^c_\Sigma$ is given by the equation $x=0$. 

The polar cone $C^*$ is the first positive coordinate ray in the plane, and, as an example of power series of the form $(*)$, we shall consider $f(x,y)=x^2yf_1(x)+f_2(x)+xy^{-1}f_3(x)+x^2y^{-2}f_4(x)$, where $f_i$ are univariate power series. The assumption that $f$ converges in a neighborhood of $X^c_\Sigma$, i.e. for $|x|\leqslant\varepsilon$, is equivalent to convergence of each univariate $f_i$. 

If $f_i(0)\ne 0$, then the Newton polyhedron of $f$ is as shown on Figure \ref{picdual}, so the tropicalization of the germ of the curve $f(x,y)=0$ for small $x$ equals the dual fan on Figure \ref{picdual}.
\end{example}

\subsection{Algebraic and topological interpretation of tropicalization} For varieties of arbitrary codimension, the tropicalization can be defined in more algebraic terms similarly to hypersurfaces.

\begin{definition}\label{definit} 1) Every vector $v\in C$ defines a valuation on the ring $R_{\mathcal{C}}$ of power series of the form $(*)$. For an element $f\in R_{\mathcal{C}}$, its initial part with respect to this valuation is denoted by $f^v$ and called the $v$-initial part of $f$. More explicitly, if $f$ is of the form $(*)$, and $N$ is its Newton polyhedron, then $$f^v(x)=\sum_{a\in N^v}c_ax^a\in\C[x_1^{\pm 1},\ldots,x_n^{\pm 1}].$$

2) For an ideal $I$ in the ring $R_{\mathcal{C}}$ and a vector $v\in C$, the $v$-initial parts of the elements of $I$ generate an ideal in $\C[x_1^{\pm 1},\ldots,x_n^{\pm 1}]$ denoted by $I^v$ and called the $v$-initial part of $I$.

3) For a $\mathcal{C}$-germ of an analytic set $P$, the $v$-initial part of its defining radical ideal $I_P^v$ defines an algebraic set $P_v\subset T$ that will be called the $v$-initial part of $P$.

4) The tropicalization of a $\mathcal{C}$-germ of a $d$-dimensional analytic set $P$ is a $d$-dimensional tropical fan $\Trop P$, represented by the following pre-fan $F$:

$\bullet$ the support set $S_F$ consists of all $v$ such that $I_P^v$ is the tensor product of $\C[x_1^{\pm 1},\ldots,x_d^{\pm 1}]$ and a non-trivial 
ring $R_v$ of finite length;

$\bullet$ the weight function $w_F$ is defined as $w_F(v)=l(R_v)$ for all $v\in S_F$.
\end{definition}
\begin{example}\label{exanewt2} Continuing Example \ref{exanewt1}.4, the $(-1,-1)$-initial part of $f$ equals $f^{(-1,-1)}=f_2(0)+xy^{-1}f_3(0)+x^2y^{-2}f_4(0)=(z-z_1)(z-z_2)$, where $z=xy^{-1}$. Thus the quotient by the initial ideal $\C[x^{\pm 1},y^{\pm 1}]/I_{f=0}^{(-1,-1)}$ equals $\C[x^{\pm 1}]\otimes\C[z^{\pm 1}]/\langle(z-z_1)(z-z_2)\rangle$, and the second multiplier $R_v$ is a Cohen--Macaulay ring of length $2=w(-1,-1)$.
\end{example}

The trpoicalization can be understood in terms of the ring of conditions \cite{dcp}. Due to the local setting, we prefer singular homology to Chow rings, as in \cite{kaz}. Denote the $j$-th homology group of the pair $(X_\Sigma, X_\Sigma\setminus X^c_\Sigma)$ by $H^c_j(X_\Sigma)$. The relative Poincare duality induces a ring structure on the sum $H^c(X_\Sigma)=\bigoplus_j H^c_j(X_\Sigma)$. The inclusions $H^c(X_\Sigma)\hookrightarrow H^c(X_{\Sigma'})$ induced by the blow-ups $X_{\Sigma'}\to X_{\Sigma}$ for subdivisions $\Sigma'$ of $\Sigma$ define the structure of a directed system on the collection of rings $H^c(X_\Sigma)$ over all toric $\mathcal{C}$-fans $\Sigma$. 
\begin{definition} The $\mathcal{C}$-local ring of conditions of the torus $T$ is the direct limit of the rings $H^c(X_\Sigma)$ with respect to subdivisions of $\Sigma$. The fundamental class of a $\mathcal{C}$-germ of an analytic set $P$ in the ring of conditions is the fundamental class of its closure in the homology of its tropical compactification.
\end{definition}
\begin{proposition} The $\mathcal{C}$-local ring of conditions of the torus $T$ is naturally isomorphic to the ring $K(\mathcal{C})$. This isomorphism is uniquely defined by the condition that, for every $\mathcal{C}$-germ of an analytic set $P$, the fundamental class of $P$ in the ring of conditions is sent to the tropicalization of $P$. 
\end{proposition}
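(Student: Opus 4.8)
The plan is to transplant the De Concini--Procesi description of the ring of conditions of a torus, in the Fulton--Sturmfels form \cite{fs}, \cite{dcp}, to the setting of ambient cones, following the local variant used in \cite{kaz}. Fix a simple $\mathcal{C}$-fan $\Sigma$; since $\Sigma$ is simple, $X_\Sigma$ is smooth, so relative Poincar\'e duality is available and turns $H^c_j(X_\Sigma)=H_j(X_\Sigma,X_\Sigma\setminus X^c_\Sigma)$ into a cohomology group. The first step is to identify this group, using the orbit stratification of $X_\Sigma$, with the group of weighted fans supported on the cones of $\Sigma$ lying in $C$ that are balanced at every interior wall: the coefficient of a class at a cone $C_i\subset C$ with $\dim C_i=n-j/2$ is a weight $w(C_i)\in\Q$, and the only relations are the balancing conditions coming from the codimension-one orbits over interior walls. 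Compactness of $X^c_\Sigma$ (guaranteed because $\Sigma$ is a $\mathcal{C}$-fan) makes this a well-defined finite-rank statement and forces all intersections of complementary cycles to lie in a compact part.

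Next, under a subdivision $\Sigma'\to\Sigma$ the inclusion $H^c(X_\Sigma)\hookrightarrow H^c(X_{\Sigma'})$ corresponds, in this picture, to the tautological inclusion of balanced weights: a weight on the cones of $\Sigma$, reinterpreted on the finer cones of $\Sigma'$, is still balanced. Passing to the direct limit over all simple $\mathcal{C}$-fans, and using that every lattice polyhedron --- hence every balanced weighted fan --- is supported on some simple $\mathcal{C}$-fan (\cite{toroidal}), identifies the $\mathcal{C}$-local ring of conditions, as a graded group, with $K(\mathcal{C})$ as described in Definition \ref{defloctrop}. To upgrade this to a ring isomorphism, I would compare the two products: on the ring of conditions the product is the cup product via relative Poincar\'e duality, i.e.\ intersection of cycles, and after a generic torus translation two cycles of complementary dimension meet transversally inside $X^c_\Sigma$, the local intersection multiplicity at each point being exactly the contribution of the fan displacement rule of Definitions \ref{definters} and \ref{defmultconstr}. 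Since the multiplication on $K(\mathcal{C})$ is by construction given by that same rule, the group isomorphism respects products.

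It remains to pin down the isomorphism by the stated property and to prove uniqueness. For a $\mathcal{C}$-germ $P$, choose a tropical compactification $(X_\Sigma,X^c_\Sigma)$; under the identification of the first step the fundamental class of $\bar P$ in $H^c(X_\Sigma)$ assigns to each interior $d$-dimensional cone $C_i$ the intersection number $m_i=(\bar P\cdot O_{C_i})$ of $\bar P$ with the corresponding orbit, which is precisely the definition of $\Trop P$ (Definition \ref{deftropsn}). For uniqueness, observe that the fundamental class of a hypersurface goes to the dual fan $[N]$ of its Newton polyhedron (Example \ref{exanewt1}), that such dual fans generate the group $K^1(\mathcal{C})$ (Remark \ref{remembed}), and that products of degree-one classes span all of $K(\mathcal{C})=\bigoplus_k K^k(\mathcal{C})$; hence a ring isomorphism taking each hypersurface class to the corresponding dual fan is unique, and naturality with respect to the maps $p_*$ and the localizations $\mathcal{F}\mapsto\mathcal{F}_v$ follows from the corresponding functorialities on both sides.

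The genuine content is concentrated in the first and the product-comparison steps: the relative (``local'') Fulton--Sturmfels theorem identifying $H^c(X_\Sigma)$ with balanced weights, and the verification that the topological cup product is computed by the combinatorial fan displacement rule in this non-complete situation. I expect this to be the main obstacle; everything else (the direct limit, the computation of fundamental classes, and uniqueness) is then bookkeeping. Concretely, I would either invoke the version already implicit in \cite{kaz} or reprove it by excising a small neighborhood of $X^c_\Sigma$, reducing to the usual complete-fan argument of \cite{fs} while using the simplicity hypothesis to keep $X_\Sigma$ smooth and avoid Chow-theoretic subtleties.
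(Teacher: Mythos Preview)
The paper does not supply its own proof of this proposition. At the opening of Section~3 it says explicitly that ``for the proofs, see for instance \cite{ekl}, \cite{tev1}, \cite{sty} and \cite{mcst}'' and that the existing global arguments ``extend to the local case verbatim''; the proposition is then stated without further justification and the text moves on to Section~3.4. Your outline is exactly the expected transplantation of the Fulton--Sturmfels/De~Concini--Procesi description (via \cite{kaz} for the relative-homology, pair $(X_\Sigma,X_\Sigma\setminus X^c_\Sigma)$ formulation) to ambient cones, so in substance your approach coincides with what the cited literature provides and with what the paper is implicitly invoking.

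One small remark on your uniqueness step: the claim that products of degree-one classes span all of $K(\mathcal{C})$ is correct, but the quickest justification in this paper's setup is via Definition~\ref{defloctrop}.2 rather than via a direct toric-cohomology argument---$K(\mathcal{C})$ is by definition the image of $K(L)$ under a ring homomorphism, and $K(L)$ is generated in degree one by the classical global result, so its image is too. With that adjustment your sketch is complete.
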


\subsection{Tropical characteristic classes} Every affine $k$-dimensional algebraic variety gives rise to a $k$-dimensional tropical fan called the tropicalization of the variety. It turns out that one can naturally refine this correspondence, assigning to a $k$-dimensional variety a collection of tropical fans of dimensions $0,1,\ldots,k$, of which the highest-dimensional one is the tropicalization. This collection (or rather its formal sum) is called the tropical characteristic class and keeps much more geometric information about the algebraic variety. It is convenient to introduce tropical characteristic classes for analytic sets with multiplicities in the following sence. 
\begin{definition}\label{defmacph} 1) A linear combination of the characteristic functions of finitely many analytic $\mathcal{C}$-germs with rational coefficients is called a $\mathcal{C}$-germ of a constructible function. The vector space of such functions will be denoted by $A(\mathcal{C})$.

2) For an epimorphism of tori $\pi:T\to T'$ and the corresponding linear map of their Lie algebras $p$, such that $p^{-1}(\mathcal{C}')=\mathcal{C}$, the MacPherson direct image \cite{mcph} is the unique linear map $$\pi_*:A(\mathcal{C})\to A(\mathcal{C}')$$ that sends the characteristic function of every $\mathcal{C}$-germ $M\subset T$ to $\varphi:T'\to\Q$ such that $\varphi(y)$ is the Euler characteristic $e\bigl(M\cap \pi^{-1}(y)\bigr)$.
\end{definition}
In what follows, we identify a germ of an analytic set with its characteristic function.
\begin{definition} \label{deftcc} The (local) tropical characteristic class is the (unique) linear map $$\langle\cdot\rangle:A(\mathcal{C})\to K(\mathcal{C})$$ that sends every $\mathcal{C}$-germ $P$ to an element $$\langle P\rangle=\sum_k \langle P\rangle_k,\; \langle P\rangle_k\in K^k(\mathcal{C}),$$ and satisfies the following properties:

1 (normalizing):  For any $\mathcal{C}$-germ of a codimension $k$ analytic set $P$, its characteristic class equals
$$\langle P\rangle_k+\ldots+\langle P\rangle_n,$$
where $\langle P\rangle_k$ is the tropicalization $\Trop P$, and, in the global case $C=L\otimes\Q$, the Euler characteristic $e(P)$ equals $\langle P\rangle_n\in K_0(\mathcal{C})=\Q$ (note that in the local case $C\subsetneq L\otimes\Q$ we have $\langle P\rangle_n\in K_0(\mathcal{C})=0$).

2 (intersection): For any $\mathcal{C}$-germs of analytic sets $P$ and $Q$, we have $$\langle gP\cap Q \rangle=\langle P \rangle\cdot \langle Q \rangle$$ for almost all elements $g\in T$, where $gP=\{gz\,|\, z\in P\}$ is the set $P\subset T$ shifted by the element $g$ of the group $T$.

3 (product): For any $\mathcal{C}$-germ $P$ and $\mathcal{C}'$-germ $Q$, we have $$\langle P\times Q \rangle=\langle P \rangle \times \langle Q \rangle.$$

4 (pushforward): For an epimorphism of tori $\pi:T\to T'$ and the corresponding linear map of their Lie algebras $p$, such that $p^{-1}(\mathcal{C}')=\mathcal{C}$, every constructible function $\varphi\in A(\mathcal{C})$ satisfies $$\langle \pi_*\varphi \rangle=p_* \langle \varphi \rangle,$$
where $\pi_*$ is the MacPherson direct image, and $p_*$ is the direct image of tropical fans (Definition \ref{defprojfan}).

5 (adjunction): If, for some vector $v\in C$, the initial part $P_v$ of a germ of an analytic set $P$ is regular (scheme-theoretically, in the sense of the defining ideal $I_P^v$), then $$\langle P\rangle_v=\langle P_v\rangle,$$ i.e. the localization of the characteristic class of $P$ at $v$ equals the characteristic class of the $v$-initial part of $P$.

6 (CSM): Consider a constructible function $\varphi:T\to\Q$ and a simple $\mathcal{C}$-fan $\Sigma$. If the support set of the tropical fan $\langle \varphi\rangle_k\in K^k(\mathcal{C})$ is contained in the codimension $k$ skeleton of $\Sigma$ for all $k=0,\ldots,n$, i.e. the characteristic class $\langle \varphi\rangle$ is contained in the subring $H^c(X_\Sigma)\subset ($ring of conditions$)\cong K(\mathcal{C})$, then this class equals the Chern-Schwartz-MacPherson class (\cite{schw}, \cite{mcph}) of the constructible function $\varphi:X_\Sigma\to\Q$ extended to the complement $X_\Sigma\setminus T$ by $0$.
\end{definition}

The existence of local tropical characteristic classes having these properties can be proved with the same explicit construction as in the global case  
(see \cite{E17}), see the next section.
We shall be especially interested in the following special case.
 
\begin{definition} A compactification $(X_\Sigma, X^c_\Sigma)$ is said to be smooth for a $\mathcal{C}$-germ of an analytic set $P$, if the toric variety $X_\Sigma$ is smooth (i.e. the toric fan $\Sigma$ is simple) and the closure of $P$ in $X_\Sigma$ is smooth and transversal to every orbit of $X^c_\Sigma$ (and hence of $X_\Sigma$).
\end{definition}
In this case, properties 1, 3 and 5 of the tropical characteristic class imply the following.
\begin{corollary}\label{schon} Let $\Sigma$ be a simple $\mathcal{C}$-fan such that the compactification $(X_\Sigma, X^c_\Sigma)$ is smooth for a $\mathcal{C}$-germ $P$. Then the characteristic class $\langle P\rangle_d$ equals the sum of the weighted fans $\sum_i e_i \cdot C_i$, where $C_i$ are the codimension $d$ interior cones in $\Sigma$, and $e_i$ is the Euler characteristic of the intersection $\bar P\cap(C_i$-orbit of $X^c_\Sigma)$.
\end{corollary}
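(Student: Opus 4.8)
The plan is to read off the weights of the tropical fan $\langle P\rangle_d\in K^d(\mathcal C)$ at generic points of the codimension-$d$ cones of $\Sigma$ by localizing there, and then to identify each local contribution as an Euler characteristic by combining the structure of the $v$-initial degeneration with properties 1, 3 and 5 of the characteristic class (Definition~\ref{deftcc}). One may reduce to the case where $\bar P$ is irreducible, hence pure-dimensional of some codimension $k$, by additivity of $\langle\cdot\rangle$ and of the Euler characteristic over the (disjoint, since $\bar P$ is smooth) connected components of $\bar P$.

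First I would fix a codimension-$d$ interior cone $C_i$ of $\Sigma$ and a generic $v$ in its relative interior. As $C_i$ is interior, $v$ lies in the open cone $C$, so $\mathcal C_v$ is the global ambient cone $(\Z^n,\Q^n)$. The key step — and, I expect, the main obstacle — is the description of the $v$-initial degeneration $P_v$. Since $\Sigma$ is simple, $C_i$ is generated by part of a basis of $L$, which splits the torus as $T\cong T_{C_i}\times T''$, with $T_{C_i}$ the $(n-d)$-dimensional subtorus spanned by $C_i$ and $T''$ a complementary $d$-dimensional subtorus identified with the orbit $O_{C_i}$ (which is the $C_i$-orbit of $X^c_\Sigma$ since $C_i$ is interior). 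Because $C_i$ is a cone of $\Sigma$ and the compactification is tropical for $P$, $P_v$ depends only on $C_i$ and is invariant under $T_{C_i}$, hence of the form $T_{C_i}\times Q$ for a subset $Q\subset T''$ (a subset of $T_{C_i}\times T''$ invariant under $T_{C_i}$ on the first factor is such a product). Smoothness of the compactification for $P$ then identifies $Q$ with $\bar P\cap O_{C_i}$, which is smooth of pure codimension $k$ in $O_{C_i}$; in particular $P_v$ is reduced and smooth. Establishing this product description, together with reducedness of $P_v$ (needed for property 5), via the structure theory of tropical/sch\"on compactifications from the references on tropicalization — which carries over verbatim to the local analytic setting — is the technical heart of the argument.

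Granting it, the computation is short. Property 5 gives $\langle P\rangle_v=\langle P_v\rangle$, and property 3 gives $\langle P_v\rangle=\langle T_{C_i}\rangle\times\langle\bar P\cap O_{C_i}\rangle$. The characteristic class of a full torus is concentrated in codimension $0$, where it is the linear span with unit weight: by property 3 this reduces to the $1$-dimensional torus $\CC$, for which property 1 gives $\langle\CC\rangle_0=\Trop\CC=\Q$ with weight $1$ and $\langle\CC\rangle_1=e(\CC)=0$. Hence $\langle P\rangle_v=[\Lin C_i]\times\langle\bar P\cap O_{C_i}\rangle$, where $[\Lin C_i]$ denotes the linear span of $C_i$ carrying unit weight. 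Localization respects the codimension grading, so taking codimension-$d$ parts gives $(\langle P\rangle_d)_v=[\Lin C_i]\times\langle\bar P\cap O_{C_i}\rangle_d$. Since $\bar P\cap O_{C_i}$ has pure codimension $k$ in the $d$-dimensional torus $O_{C_i}$, property 1 identifies its top component $\langle\bar P\cap O_{C_i}\rangle_d\in K_0=\Q$ with the Euler characteristic $e_i=e(\bar P\cap O_{C_i})$. Therefore $(\langle P\rangle_d)_v=e_i\cdot[\Lin C_i]$, so the weight of $\langle P\rangle_d$ at $\relint C_i$ equals $e_i$, and near $v$ the support of $\langle P\rangle_d$ lies in $\Lin C_i$.

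Finally, to exclude any other support: repeating the localization at a point $v\in C$ in the relative interior of a cone $\sigma$ of $\Sigma$ of dimension $m>n-d$, the same product description gives $\langle P\rangle_v=[\Lin\sigma]\times\langle\bar P\cap O_\sigma\rangle$ with $\bar P\cap O_\sigma$ of codimension $k$ in the $(n-m)$-dimensional torus $O_\sigma$; as $n-m<d$, the second factor has no codimension-$d$ component, so $(\langle P\rangle_d)_v=0$. Thus the weight of $\langle P\rangle_d$ vanishes outside the codimension-$d$ skeleton of $\Sigma$, and together with the previous paragraph this yields $\langle P\rangle_d=\sum_i e_i\,C_i$, as claimed. (When $d<k$, or in the local case $d=n$, all the relevant intersections $\bar P\cap O_{C_i}$ are empty and both sides vanish, in agreement with property 1.)
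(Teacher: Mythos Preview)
Your proof is correct and follows precisely the route the paper indicates: the paper's entire argument is the remark that ``properties 1, 3 and 5 of the tropical characteristic class imply the following,'' and you have spelled out that deduction in detail (localize at a generic $v$ in a codimension-$d$ interior cone, identify $P_v\cong T_{C_i}\times(\bar P\cap O_{C_i})$ using smoothness of the compactification so that property~5 applies, then split via property~3 and read off the Euler characteristic via property~1). Your additional check that the support of $\langle P\rangle_d$ cannot protrude beyond the codimension-$d$ skeleton of $\Sigma$ is a welcome piece of rigor that the paper leaves implicit.
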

\begin{remark} 1. The existence of characteristic classes for algebraic sets that admit a smooth compactification is independently proved in \cite{gross}. This proof as well 
extends to analytic germs. 

2. A similar construction for matroids in \cite{shaw} (overlapping with ours in the case of linear subvarieties of the complex torus) may be interesting in the context of monodromy conjecture because of the special role of $B$-faces in the non-degenerate case (see \cite{L-V}, \cite{ELT}).
\end{remark}

 \begin{example}
\label{tccnondeg} 
1. A hypersurface $f=0$ of the form $(*)$ admits a smooth compactification, if it is non-degenerate, i.e. 0 is a regular value of the polynomial $f^v:T\to\C$ for every vector $v$ in the interior of $C$. In this case, a smooth compactification is given by any simple fan compatible with the Newton polyhedron of $f$.

2. For a polyhedron $N$ denote the sum $\sum_d (-1)^{d-1}[N]^d$ by $\langle N\rangle$, then, for every non-degenerate hypersurface $f=0$ with the Newton polyhedron $N$ we have $\langle f=0\rangle=\langle N\rangle$.

3. In particular, continuing Example \ref{exanewt2}, the germ $f(x,y)$ is non-degenerate if $f_3^2(0)-4f_2(0)f_4(0)\ne 0$. The tropical characteristic class of the curve germ $f=0$ is just the dual fan $[N]$ shown on Figure \ref{picdual}. 

4. Similarly, for an algebraic curve $C$ in the complex torus $\CC^2$ given by a polynomial equation with the Newton polygon $N$, the tropical characteristic class equals $[N]+e(C)$ (where $e$ is the Euler characteristic). If the curve is non-degenerate, then the Kouchnirenko--Bernstein--Khovansii formula \cite{khovci} ensures that $e(C)=-[N]^2$, and the tropical characteristic class equals $\langle N\rangle$. For degenerate curves this equality does not hold true. In particular, the tropical characteristic class of a germ is not defined by the tropicalization of this germ; it is a strictly stronger invariant.
\end{example}

\section{Relative tropical fans and a construction for tropical characteristic classes}

The aim of this section is to provide a construction for tropical characteristic classes, in order to prove that they exist (we do not need this construction in what follows). The construction is essentially the same as for the global case in \cite{E17}, as soon as we use the right target ring, somewhat different from $K(\mathcal{C})$. 
Let us introduce it.

\subsection{Relative tropical fans} 
\begin{definition}\label{defreltrop}
I. For an $n$-dimensional ambient cone $\mathcal{C}=(L,C)$, the space of $k$-dimensional {\it relative tropical fans} $KK_k(\mathcal{C})=KK^{n-k}(\mathcal{C})$ is the space of pairs $$\{(P,Q)\,|\, P \mbox{ and } Q\in K_k(L),\, \mbox{ such that the support of } P-Q \mbox{ is in } \bar C\}$$ 
(with the componentwise operations) modulo its subspace $$\{(R,R)\,|\, \mbox{ the support of }R\mbox{ avoids }C\}.$$

II. Sums, direct images, localizations and products of relative tropical fans are defined componentwise. The corresponding ring structure on the direct sum of vector spaces $KK(\mathcal{C})=\bigoplus_{k=0}^n KK^{k}(\mathcal{C})$ is called the ring of relative fans.
\end{definition}
\begin{example} Sending an element $(p,q)\in KK_0(\mathcal{C})$ to $p-q$, we identify $KK_0(\mathcal{C})$ with $\Q$. In particular, the product of relative fans $\mathcal{F}\in KK^k(\mathcal{C})$ and $\mathcal{G}\in KK^{n-k}(\mathcal{C})$ is identified with the number that will be called the intersection number of $\mathcal{F}$ and $\mathcal{G}$ and denoted by $\mathcal{F}\circ\mathcal{G}$.
\end{example}
This definition may not seem too natural at the first glance, but its geometrical meaning is clarified by its connection to a relative version of the mixed volume, see below. Its crucial advantage over the ring of local fans $K_k(\mathcal{C})$ is that $KK_0(\mathcal{C})$ equals $\Q$ for all ambient cones $\mathcal{C}$, while $K_0(\mathcal{C})=0$ unless $\mathcal{C}=(\Q^n,\Z^n)$. This difference will be employed in the key Theorem \ref{thlocexist}. There is also a non-trivial point in Definition \ref{defreltrop}.
\begin{lemma}
The product of relative fans $(P_i,Q_i)\in KK^{k_i}(\mathcal{C}),\, i=1,2$, does not depend on the choice of representatives $(P_i,Q_i)$.
\end{lemma}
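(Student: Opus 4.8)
The plan is to reduce well-definedness of the product to the known well-definedness of the product on $K(L)$, together with the following elementary observation about the subspace we quotient by: if $(R,R)$ is an allowed representative of $0\in KK(\mathcal{C})$, i.e.\ $\supp R$ avoids $C$, then for any relative fan $(P,Q)$ the product $(P\cdot R,\,Q\cdot R)$ is again of this form, i.e.\ $\supp(P\cdot R)$ and $\supp(Q\cdot R)$ avoid $C$. Granting this, changing $(P_1,Q_1)$ by $(R_1,R_1)$ changes $(P_1\cdot P_2,\,Q_1\cdot Q_2)$ by $(R_1\cdot P_2,\,R_1\cdot Q_2)$; but $R_1\cdot P_2$ and $R_1\cdot Q_2$ differ by $R_1\cdot(P_2-Q_2)$, whose support avoids $\bar C$ because $\supp(P_2-Q_2)$ does and $\supp(A\cdot B)\subset\supp A$ in an appropriate sense (see below), and both $R_1\cdot P_2$ and $R_1\cdot Q_2$ avoid $C$, so the pair $(R_1\cdot P_2,\,R_1\cdot Q_2)$ lies in the quotiented subspace. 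Symmetrically for a change in $(P_2,Q_2)$, and bilinearity takes care of a simultaneous change. Hence the product descends to $KK(\mathcal{C})$.

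So the real content is the support estimate: for tropical fans $A,B\in K(L)$, the support of the product $A\cdot B$ is contained in $\supp A\cap\supp B$. This follows directly from the local description of multiplication in Definition \ref{defmultconstr}(II): a point $v$ can lie in $\supp(A\cdot B)$ only if the localizations $A_v$ and $B_v$ form a bookcrossing with nonzero intersection weight, and in particular both $A_v$ and $B_v$ must be nonzero, which forces $v\in\supp A$ and $v\in\supp B$. Consequently, if $\supp B$ avoids $C$ (resp.\ $\bar C$), then so does $\supp(A\cdot B)$; and if $\supp(P_2-Q_2)$ avoids $\bar C$, then so does $\supp\bigl(R_1\cdot(P_2-Q_2)\bigr)$ — here one first writes $P_2-Q_2$ as an element of $K(L)$ whose support avoids $\bar C$ and applies the estimate to $R_1\cdot(P_2-Q_2)$. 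One must be mildly careful that "support" of a difference of tropical fans behaves well, but since $\supp$ is defined as the closure of the locus where the weight is nonzero and $K(L)$ is a vector space, $\supp(P_2-Q_2)\subset\supp P_2\cup\supp Q_2$ and the hypothesis on the class $(P_2,Q_2)$ is precisely that this smaller set, $\supp(P_2-Q_2)$, avoids $\bar C$.

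I expect the main (minor) obstacle to be bookkeeping around the two different "avoidance" conditions: the pairs $(P,Q)$ are only required to have $\supp(P-Q)$ avoid the \emph{closed} cone $\bar C$, whereas the quotient subspace requires $\supp R$ to avoid the \emph{open} cone $C$. One has to check that the product of a representative of $0$ with an arbitrary relative fan lands in the subspace with the \emph{open}-cone condition, using only the \emph{closed}-cone condition on the ambient pair; this works because multiplying by $R_1$ (whose support avoids all of $\bar C$, in particular $C$) already forces the products $R_1\cdot P_2$ and $R_1\cdot Q_2$ to avoid $C$ by the support estimate, regardless of the weaker hypothesis on $(P_2,Q_2)$. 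After that, everything is a formal consequence of bilinearity of the product on $K(L)$, which is part of the ring structure already established in Section \ref{schoice1}. No genuinely new geometric input is needed beyond the containment $\supp(A\cdot B)\subset\supp A\cap\supp B$.
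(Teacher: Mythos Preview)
Your argument has a genuine gap at the crucial step. The subspace you quotient by is
\[
\{(R',R')\,:\,\supp R'\cap C=\varnothing\},
\]
so membership requires the two components to be \emph{equal}, not merely to satisfy similar support conditions. You verify that $\supp(R_1\cdot P_2)$ and $\supp(R_1\cdot Q_2)$ avoid $C$, and that $\supp\bigl(R_1\cdot(P_2-Q_2)\bigr)$ avoids $\bar C$; but from these two facts you cannot conclude that $(R_1\cdot P_2,\,R_1\cdot Q_2)$ lies in the quotiented subspace, because nothing you have written forces $R_1\cdot P_2=R_1\cdot Q_2$ as elements of $K(L)$. The support estimate $\supp(A\cdot B)\subset\supp A\cap\supp B$ only controls where a product can be nonzero; it does not compute the actual weights, and in particular cannot show that two products agree pointwise outside $\bar C$.

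The paper's proof supplies precisely the missing ingredient, and it is not a support estimate. It first uses Definition~\ref{defmultconstr} to reduce (via localization) to the complementary-dimension case $k_1+k_2=n$, so that the product lands in $KK_0(\mathcal{C})$ and the question becomes the numerical equality $R\circ P_2=R\circ Q_2$. Then it computes both intersection numbers via the displacement rule of Definition~\ref{definters}, making the specific choice $u_1=0$ and generic $u_2\in C$; the point is that with this particular displacement, every intersection point $v$ lies in $\supp R$, and one can compare the local contributions for $P_2$ and $Q_2$ at the un-shifted points $v-u_2$. This comparison of local contributions is the substance of the lemma, and it cannot be replaced by the containment $\supp(A\cdot B)\subset\supp A\cap\supp B$ alone. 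To repair your approach you would have to prove $R_1\cdot P_2=R_1\cdot Q_2$ directly in $K(L)$, which after unwinding the definition of the product is exactly the paper's computation.
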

\begin{proof} By Definition \ref{defmultconstr}, the localization reduces the statement to the case $k_1+k_2=n$. In this case, it is enough to prove that, if $(P_1,Q_1)=(R,R)$, and the support of the fan $R$ avoids $C$, then the intersection number vanishes, i.e. $R\circ P_2=R\circ Q_2$. By Definition \ref{definters}, in order to compute the intersection number of $P_i$'s (or $Q_i$'s), we should shift $P_i$'s and $Q_i$'s by generic displacement vectors $u_i\in L\otimes\Q$ and count local contributions of the intersection points $v_1,\ldots,v_M$ of shifted $P_i$'s (or intersection points $w_1,\ldots,w_N$ of shifted $Q_i$'s). However, if we choose $u_1=0$ and generic $u_2\in C$, then, for every point $v$ of the support set of $R$, the shifted fans $P_i$ and $Q_i$ will locally coincide: $$(P_i)_{v-u_i}=(Q_i)_{v-u_i},$$
because $v-u_i\notin\bar C$. Thus, we have $M=N$ and $v_j=w_j$ for all $j$, and the contribution of each of these intersection points to the intersection number of $P_i$'s is the same as for $Q_i$'s.
\end{proof}
\begin{remark}\label{embrel} 1) The ring $K(\mathcal{C})$ embeds into $KK(\mathcal{C})$, sending a fan $\mathcal{F}$ to the pair $(\mathcal{\tilde F},\mathcal{\tilde F})$, where $\mathcal{\tilde F}$ is an arbitrary global fan in $K(L)$, whose restriction to the cone $C$ equals $\mathcal{F}$. We shall denote the relative fan $(\mathcal{\tilde F},\mathcal{\tilde F})$ by the same letter $\mathcal{F}$ when it comes to adding and multiplying it with other relative tropical fans, and whenever the result is independent of the choice of $\mathcal{\tilde F}$.

2) Moreover, the intersection number of germs of analytic sets can be reconstructed from the intersection of their tropical fans in the ring $KK$. However, for this purpose we should send a germ of an analytic sent to a more sophisticated element of $KK$ than the one described in the preceding remark.

Namely, if a germ of an analytic set $X$ in $\C^n$ intersects all positive-dimensional coordinate subtori properly, then its local tropical fan $P$ in the ring $K(\Z^n,\R^n_+)$ can be uniquely extended to the global tropical fan $\tilde P\in K(\Z^n)$ whose restriction to the octant $\R^n_+$ equals $P$, so that the pair $(0,\tilde P)$ is 
a relative tropical fan.
Denoting this relative tropical fan by ${\rm RTrop} X\in K(\Z^n,\R^n_+)$, the intersection number of the germs $X$ and $Y$ of complementary dimension in $\C^n$ equals the intersection number of their relative tropical fans ${\rm RTrop} X\cdot {\rm RTrop} Y \in K_0(\Z^n,\R^n_+)=\Q$. 
(We do not need this fact for the purpose of the present paper.)
\end{remark}

\subsection{Relative mixed volumes} 
Let $\mathcal{C}=(L,C)$ be an $n$-dimensional lattice cone and $C^*\subset L^*\otimes\Q$ its polar cone.
\begin{definition}[\cite{E05},\cite{Edet},\cite{KhT}] I. A relative polyhedron compatible with $\mathcal{C}$ is a pair of closed polyhedra $(A,B)$ such that

$\bullet$ each of $A$ and $B$ is of the form $($bounded lattice polytope in $L^*)+C^*$, where ``$+$'' is the Minkowski summation;

$\bullet$ the symmetric difference of $A$ and $B$ is bounded.

The set $PP(\mathcal{C})$ of all relative polyhedra compatible with $\mathcal{C}$ is a semigroup with respect to the componentwise Minkowski summation.

II. The dual fan $[(A,B)]\in KK^1(\mathcal{C})$ of the relative polyhedron $(A,B)$ is the relative fan with a representative of the form $([\tilde A],[\tilde B])$, where $\tilde A\subset A$ and $\tilde B\subset B$ are bounded lattice polytopes such that $\tilde A\setminus\tilde B=A\setminus B$ and $\tilde B\setminus\tilde A=B\setminus A$.

III. The volume of a relative polyhedron $\Vol(A,B)$ is the difference of the lattice volumes of  $\tilde A\setminus\tilde B$ and $\tilde B\setminus\tilde A$.

IV. The mixed volume of relative polyhedra is the unique symmetric multilinear function 
$$\MV:\underbrace{PP(\mathcal{C})\times\ldots\times PP(\mathcal{C})}_{n}\to\Z$$
such that $\MV\left( (A,B),\ldots,(A,B) \right)=\Vol (A,B)$.
\end{definition}
\begin{figure}
\centering\includegraphics{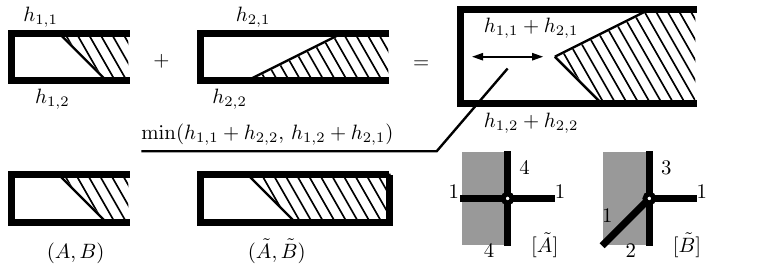}\skipfig{
\psscalebox{1.0 1.0} 
{
\begin{pspicture}(0,-2.2195508)(12.96,2.2195508)
\definecolor{colour0}{rgb}{0.6,0.6,0.6}
\psline[linecolor=white, linewidth=0.04, fillstyle=hlines, hatchwidth=0.028222222, hatchangle=-60.0, hatchsep=0.1411111](11.66,2.1595507)(10.86,2.1595507)(9.26,1.3595508)(10.06,0.55955076)(11.66,0.55955076)(11.66,2.1595507)
\psline[linecolor=white, linewidth=0.04, fillstyle=hlines, hatchwidth=0.028222222, hatchangle=-60.0, hatchsep=0.1411111](6.46,1.7595508)(5.66,1.7595508)(4.06,0.9595508)(6.46,0.9595508)(6.46,1.7595508)
\psline[linecolor=white, linewidth=0.04, fillstyle=hlines, hatchwidth=0.028222222, hatchangle=-60.0, hatchsep=0.1411111](2.06,1.7595508)(0.86,1.7595508)(1.66,0.9595508)(2.06,0.9595508)(2.06,1.7595508)
\psline[linecolor=white, linewidth=0.04, fillstyle=solid,fillcolor=colour0](8.46,-0.24044922)(8.46,-1.8404492)(7.66,-1.8404492)(7.66,-0.24044922)(8.46,-0.24044922)
\psline[linecolor=black, linewidth=0.12](7.66,-1.0404493)(9.26,-1.0404493)
\psline[linecolor=white, linewidth=0.04, fillstyle=vlines, hatchwidth=0.028222222, hatchangle=30.0, hatchsep=0.1411111](2.06,-0.6404492)(1.26,-0.6404492)(0.86,-0.6404492)(1.66,-1.4404492)(2.06,-1.4404492)(2.06,-0.6404492)
\psline[linecolor=black, linewidth=0.12](2.06,-0.6404492)(1.26,-0.6404492)(0.06,-0.6404492)(0.06,-1.4404492)(2.06,-1.4404492)
\psline[linecolor=black, linewidth=0.04](2.06,-0.6404492)(0.86,-0.6404492)(1.66,-1.4404492)(2.06,-1.4404492)
\psline[linecolor=black, linewidth=0.04, fillstyle=hlines, hatchwidth=0.028222222, hatchangle=-60.0, hatchsep=0.1411111](4.06,-0.6404492)(4.86,-1.4404492)(6.46,-1.4404492)(6.46,-0.6404492)(4.06,-0.6404492)
\psline[linecolor=black, linewidth=0.12](6.46,-0.6404492)(3.26,-0.6404492)(3.26,-1.4404492)(6.46,-1.4404492)(6.46,-0.6404492)
\psline[linecolor=black, linewidth=0.12](8.46,-0.24044922)(8.46,-1.8404492)
\pscircle[linecolor=black, linewidth=0.08, fillstyle=solid, dimen=inner](8.46,-1.0404493){0.04}
\psline[linecolor=white, linewidth=0.04, fillstyle=solid,fillcolor=colour0](10.86,-0.24044922)(10.86,-1.8404492)(10.06,-1.8404492)(10.06,-0.24044922)(10.86,-0.24044922)
\psline[linecolor=black, linewidth=0.12](10.86,-1.0404493)(11.66,-1.0404493)
\psline[linecolor=black, linewidth=0.12](10.86,-0.24044922)(10.86,-1.8404492)
\psline[linecolor=black, linewidth=0.12](10.06,-1.8404492)(10.86,-1.0404493)
\pscircle[linecolor=black, linewidth=0.08, fillstyle=solid, dimen=inner](10.86,-1.0404493){0.04}
\rput[bl](0.66,-2.1404493){$(A,B)$}
\rput[bl](4.06,-2.1404493){$(\tilde A,\tilde B)$}
\rput[bl](8.86,-2.0404491){$[\tilde A]$}
\rput[bl](11.26,-2.0404491){$[\tilde B]$}
\rput[bl](7.46,-1.0404493){$1$}
\rput[bl](8.66,-0.6404492){$4$}
\rput[bl](8.06,-2.0404491){$4$}
\rput[bl](9.26,-1.0404493){$1$}
\rput[bl](10.06,-1.4404492){$1$}
\rput[bl](10.46,-2.0404491){$2$}
\rput[bl](11.06,-0.6404492){$3$}
\rput[bl](11.66,-1.0404493){$1$}
\psline[linecolor=black, linewidth=0.12](2.06,1.7595508)(0.06,1.7595508)(0.06,0.9595508)(2.06,0.9595508)
\psline[linecolor=black, linewidth=0.12](6.46,1.7595508)(3.26,1.7595508)(3.26,0.9595508)(6.46,0.9595508)
\psline[linecolor=black, linewidth=0.12](11.66,2.1595507)(7.66,2.1595507)(7.66,0.55955076)(11.66,0.55955076)
\psline[linecolor=black, linewidth=0.04](0.86,1.7595508)(1.66,0.9595508)
\psline[linecolor=black, linewidth=0.04](5.66,1.7595508)(4.06,0.9595508)
\psline[linecolor=black, linewidth=0.04](9.26,1.3595508)(10.86,2.1595507)
\psline[linecolor=black, linewidth=0.04](9.26,1.3595508)(10.06,0.55955076)
\psline[linecolor=black, linewidth=0.04, arrowsize=0.05291667cm 2.0,arrowlength=1.4,arrowinset=0.0]{<->}(7.86,1.3595508)(9.06,1.3595508)
\rput[bl](0.46,0.45955077){$h_{1,2}$}
\rput[bl](0.26,1.8595508){$h_{1,1}$}
\rput[bl](3.86,1.8595508){$h_{2,1}$}
\rput[bl](3.46,0.45955077){$h_{2,2}$}
\rput[bl](8.06,0.05955078){$h_{1,2}+h_{2,2}$}
\rput[bl](8.06,1.6595508){$h_{1,1}+h_{2,1}$}
\rput[bl](2.26,-0.14044923){$\min(h_{1,1}+h_{2,2},\,h_{1,2}+h_{2,1})$}
\rput[bl](2.46,1.1595508){$+$}
\rput[bl](6.86,1.1595508){$=$}
\psline[linecolor=black, linewidth=0.04](8.46,1.1595508)(7.26,-0.24044922)(2.26,-0.24044922)
\end{pspicture}
}}
\caption{Minkowski sums and dual fans of relative polyhedra}\label{picrelvol}
\end{figure}
\begin{lemma} 1. The dual fan does not depend on the choice of $\tilde A$ and $\tilde B$.

2. Taking the dual fan is a linear map $PP(\mathcal{C})\to KK^1(\mathcal{C})$.

3. We have $\MV\left( (A_1,B_1),\ldots,(A_n,B_n) \right)=[(A_1,B_1)]\cdot\ldots\cdot[(A_n,B_n)]$.
\end{lemma}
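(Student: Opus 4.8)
The plan is to prove the three statements together by reducing everything to the non-relative case already recorded in the excerpt (dual fans of ordinary polyhedra, their tropicality, $[N]^k = [N]\cdot\dots\cdot[N]$, and $[N_1]\cdot\ldots\cdot[N_n]=\MV(N_1,\dots,N_n)$).

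\emph{Step 1: well-definedness of $[(A,B)]$.} First I would check that, given two choices $(\tilde A,\tilde B)$ and $(\tilde A',\tilde B')$ of bounded lattice polytopes inside $A$, $B$ with $\tilde A\setminus\tilde B = A\setminus B$ and $\tilde B\setminus\tilde A = B\setminus A$, the relative fans $([\tilde A],[\tilde B])$ and $([\tilde A'],[\tilde B'])$ coincide in $KK^1(\mathcal C)$. The key observation is that any two such choices can be connected through a chain of enlargements: if $\tilde A\subset\tilde A''$ with $\tilde A''\setminus\tilde B$ still equal to $A\setminus B$, then $\tilde A''$ is obtained from $\tilde A$ by adding lattice points that lie in $\tilde B$ as well, so $[\tilde A''] - [\tilde A]$ is supported away from the locus of vectors whose support face on $A$ is bounded and not already a face of $\tilde A$; concretely one shows $[\tilde A''] - [\tilde A] = [\tilde B''] - [\tilde B]$ for the corresponding enlargement on the $B$ side, using that $\tilde A''\setminus \tilde A$ and $\tilde B''\setminus\tilde B$ are ``the same'' bounded region. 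Hence the difference of representatives lies in the subspace $\{(R,R)\mid \supp R \text{ avoids } C\}$ modded out in Definition~\ref{defreltrop}. Since any two admissible choices have a common enlargement, this gives independence.

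\emph{Step 2: linearity.} Linearity of $PP(\mathcal C)\to KK^1(\mathcal C)$ follows from $[N_1+N_2]=[N_1]+[N_2]$ for ordinary polyhedra (recorded just before Remark~\ref{remembed}) once one checks compatibility of the Minkowski sum with the truncation procedure: given $(A_1,B_1)$ and $(A_2,B_2)$ with chosen $(\tilde A_i,\tilde B_i)$, the polytopes $\tilde A_1+\tilde A_2$ and $\tilde B_1+\tilde B_2$ are admissible truncations for $(A_1+A_2,B_1+B_2)$ — this needs the elementary fact that $(A_1+A_2)\setminus(B_1+B_2)$ is already captured by $(\tilde A_1+\tilde A_2)\setminus(\tilde B_1+\tilde B_2)$, which is where the ``symmetric difference is bounded'' hypothesis does its work. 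Then componentwise $[\tilde A_1+\tilde A_2] = [\tilde A_1]+[\tilde A_2]$ and likewise for $B$, giving $[(A_1,B_1)]+[(A_2,B_2)]$.

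\emph{Step 3: the mixed volume formula.} By part II of Definition~\ref{definters}–style multilinearity, both sides of statement~3 are symmetric multilinear functions of $n$ relative polyhedra, so it suffices to verify equality on the diagonal, i.e. $\Vol(A,B) = [(A,B)]^n$. Expanding $[(A,B)]^n = ([\tilde A],[\tilde B])^n = ([\tilde A]^n, [\tilde B]^n)$ and using the identification $KK_0(\mathcal C)\cong\Q$ via $(p,q)\mapsto p-q$, this becomes $[\tilde A]^n - [\tilde B]^n = \vol(\tilde A) - \vol(\tilde B)$, which holds because $[\;\cdot\;]^n$ is lattice volume for bounded lattice polytopes (stated in the excerpt); and $\vol(\tilde A)-\vol(\tilde B) = \vol(\tilde A\setminus\tilde B) - \vol(\tilde B\setminus\tilde A) = \Vol(A,B)$ by definition, since $\tilde A$ and $\tilde B$ agree on $\tilde A\cap\tilde B$. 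To descend from the diagonal to the general multilinear identity, I would invoke polarization together with part~3 as a definition of $\MV$ — but one must also confirm that the right-hand side $[(A_1,B_1)]\cdot\ldots\cdot[(A_n,B_n)]$ is actually multilinear and symmetric, which is immediate from the ring structure on $KK(\mathcal C)$ and Step~2.

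\emph{Main obstacle.} The delicate point is Step~1: showing that different admissible truncations give the same class in the quotient. The ordinary dual fan $[\tilde A]$ genuinely depends on $\tilde A$ — enlarging $\tilde A$ changes which faces are bounded — so one really has to track that every such change is invisible modulo $\{(R,R)\mid\supp R\text{ avoids }C\}$, i.e. that the ``new'' bounded faces created on the $A$-side are matched by identical new bounded faces on the $B$-side precisely over the region where $A$ and $B$ coincide. Getting the bookkeeping of faces, their lattice volumes, and the supports right — and confirming it is exactly the subspace quotiented out in Definition~\ref{defreltrop} — is the heart of the argument; the rest is formal manipulation with the already-established tropical intersection calculus.
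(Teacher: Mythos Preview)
Your approach is exactly the paper's: parts 2 and 3 are reduced to the corresponding facts for ordinary (non-relative) polytopes, and part 1 is handled as a direct verification. The paper's entire proof is the single sentence ``Part 1 is obvious, 2 and 3 follow from the same properties of non-relative polytopes,'' so your outline---including the identification of Step 1 as the only place requiring real bookkeeping---is already considerably more detailed than what the paper supplies.
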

Part 1 is obvious, 2 and 3 follow from the same properties of non-relative polytopes.
\begin{example} Examples of the Minkowski sum and the dual fan of relative polyhedra are given on Figure \ref{picrelvol} (the ambient cone is gray, the first polyhedron $A$ in a relative polyhedron $(A,B)$ is bold, the second polyhedron $B$ is hatched).
\end{example}

\subsection{Relative Kouchnirenko--Bernstein--Khovanskii formula}
Let $\Sigma$ be a simple $\mathcal{C}$-fan for an ambient cone $\mathcal{C}=(L,C)$ (Definition \ref{defcfan}). Its one-dimensional cones are generated by primitive vectors $v_i\in L$ and correspond to codimension 1 orbits of the corresponding toric variety $X_\Sigma$. Denote the closures of these orbits by $D_i$.

Let $A$ be a polyhedron, for which $\Sigma$ is the minimal compatible fan (i.e. the $n$-dimensional cones of $\Sigma$ are exactly the linearity domains of the support function of $A$), and denote the value of the support function $A(v_i)$ by $m_i$. Then there exists a unique very ample line bundle $I_A$ on $X_\Sigma$ equipped with a meromorphic section $s_A$, such that the divisor of zeros and poles of $s_A$ equals $\sum_i m_i D_i$. Moreover, this correspondence, assigning the pair $(I_A,s_A)$ to the
polyhedron $A$, is an isomorphism between the semigroups $\{$convex lattice polyhedra for which $\Sigma$ is the minimal compatible fan$\}$ and $\{$pairs $(I,s)$, where $I$ is a very ample line bundle on $X_\Sigma$, and $s$ is its meromorphic section with zeros
and poles outside the dense torus$\}$.

For every germ $s$ of a holomorphic section of $I_A$ on $(X_\Sigma,X_\Sigma^c)$, the quotient $s/s_A$ is an analytic function on (an open subset of) the dense torus $T$, i.e. a $\mathcal{C}$-germ in the sense of Definition \ref{defnewt}.1. The Newton polyhedron and the $v$-initial part $s^v$ of the section $s$ are defined as the respective objects for $s/s_A$ (Definition \ref{defnewt}.1 and \ref{definit}.1).  
\begin{example}\label{exatoricpair2}
1. If $A$ is the positive orthant in $\Q^n$, and $\mathcal{C}$ is its polar cone, then $(X_\Sigma,X_\Sigma^c)=(\C^n,0)$, and the Newton polyhedron and initial part of a germ $s:(\C^n,0)\to(\C,0)$ have their usual meaning.

2. Continuing Example \ref{exatoricpair}, the polyhedron $A=[0,1]\times[0,+\infty)$ (bold on Figure \ref{picrelvol}) gives rise to the line bundle $I_A$ on $X_\Sigma=\CP^1\times\C^1$ which is a pull-back of $\mathcal{O}(1)$ on $\CP^1$. A typical section of this bundle has the form $s=ux^{h_1}f_1(x)+vx^{h_2}f_2(x)$ for some univariate power series $f_1$ and $f_2$ with non-zero constant terms. 
The Newton polytope of such section is hatched on Figure \ref{picrelvol}.
\end{example}
\begin{definition} \label{defmilnor} Let $A_1,\ldots,A_k$ be lattice polyhedra compatible with a simple $\mathcal{C}$-fan $\Sigma$,  
and let $s_i$ be germs of holomorphic sections of the line bundles $I_{A_i}$ on $(X_\Sigma,X_\Sigma^c)$. Let $\varepsilon_i$ be generic sections (of the same bundles) small enough relatively to a small enough tubular neighborhood $U$ of the compact part $X_\Sigma^c$.
The {\it Milnor fiber} of the tuple $(s_1,\ldots,s_k)$ on a germ of an analytic set $S$ in $(X_\Sigma,X_\Sigma^c)$ is the set 
$$\{s_1=\varepsilon_1\}\cap\ldots\cap\{s_k=\varepsilon_k\}\cap S\cap U\cap T.$$
\end{definition}
\begin{theorem}[Relative Kouchnirenko--Bernstein--Khovanskii formula, \cite{Edet}, \cite{E11}]\label{relkbk} I. In the setting of Definition \ref{defmilnor}, assume that the sections $s_i$ of the line bundles $I_{A_i}$ have Newton polyhedra $B_i\subset A_i$, such that $A_i\setminus B_i$ is bounded, and generic initial parts in the sense that $s_1^v=\ldots=s_k^v=0$ defines a regular algebraic subvariety of the torus $T$ for every vector $v\in C$. 

Then the Euler characteristic of the Milnor fiber of $(s_1,\ldots,s_k)$ on the toric variety $X_\Sigma$
equals $$(-1)^{n-k}\sum_{n_1>0,\ldots,n_k>0\atop n_1+\ldots+n_k=n} (A_1,B_1)^{n_1}\cdot\ldots\cdot(A_k,B_k)^{n_k},$$
where the product of $n$ relative polyhedra stands for their relative mixed volume.

II. In particular, if $k=n$, then the intersection of the sets $\{s_1=\varepsilon_1\},\ldots,\{s_k=\varepsilon_k\}$ is contained in the compact part $X_\Sigma^c$, so the intersection index of these sets is correctly defined and equals the relative mixed volume of $(A_1,B_1),\ldots,(A_n,B_n)$. 
\end{theorem}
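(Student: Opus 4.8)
The plan is to reduce the relative formula to the classical Kouchnirenko--Bernstein--Khovanskii formula for generic complete intersections in a complex torus, using the dictionary developed above between relative polyhedra $(A_i,B_i)$, their dual fans $[(A_i,B_i)]\in KK^1(\mathcal C)$, and intersection numbers on $X_\Sigma$ localized near the compact part $X^c_\Sigma$. As a preliminary reduction I would refine $\Sigma$ to a simple $\mathcal C$-fan compatible with all of the $A_i$ and $B_i$; this alters neither $(X_\Sigma,X^c_\Sigma)$ near the compact part nor the Milnor fiber.

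The first step, and the crux of the argument, is a properness statement underlying both parts: for generic $\varepsilon_i$ small enough relatively to $U$, the complete intersection $\{s_1=\varepsilon_1\}\cap\cdots\cap\{s_k=\varepsilon_k\}$ has compact closure $\bar Z$ in $X_\Sigma$, $\bar Z$ meets no orbit outside $X^c_\Sigma$ except the open orbit $T$, and $\bar Z$ is transversal to every orbit of $X^c_\Sigma$. To prove it, note that along an orbit $O$ attached to a cone of $\Sigma$ not interior to $C$, the boundedness of $A_i\setminus B_i$ forces the corresponding faces of $A_i$ and of $B_i$ to coincide, so $s_i$ and the generic section $\varepsilon_i$ have identical initial parts there and the system induced on $O\cong\CC^{\dim O}$ is a generic system with fixed Newton polytopes; Bernstein's theorem then forbids solutions, so nothing escapes along $O$. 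For the orbits inside $X^c_\Sigma$, transversality follows from the hypothesis that $s_1^v=\cdots=s_k^v=0$ defines a regular subvariety for every $v\in C$, combined with genericity of the $\varepsilon_i$. Controlling the complete intersection near the non-compact part of $X_\Sigma$ in this way is where the boundedness of $A_i\setminus B_i$, the genericity of the perturbation, and the hypothesis on initial parts all come into play, and I expect it to be the only genuinely non-formal ingredient.

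Granting properness, part~II is immediate: for $k=n$ the complete intersection is a finite subset of $T$ accumulating only on $X^c_\Sigma$, so its cardinality is the intersection number in $X_\Sigma$, localized at $X^c_\Sigma$, of the divisors $\{s_i=\varepsilon_i\}$. Each such divisor is governed globally by the bundle $I_{A_i}$ and, near $X^c_\Sigma$, by the Newton polyhedron $B_i$ of $s_i$ --- precisely the data recorded by the relative polyhedron $(A_i,B_i)$ --- so its localized intersection-theoretic class is the dual fan $[(A_i,B_i)]$. Multiplying these and invoking $\MV\bigl((A_1,B_1),\dots,(A_n,B_n)\bigr)=[(A_1,B_1)]\circ\cdots\circ[(A_n,B_n)]$ (the lemma relating relative mixed volumes to products of dual fans) gives the count, which is the $k=n$ instance of the formula.

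For general $k<n$ I would run an orbit recursion. Additivity of the Euler characteristic over the stratification of the compact set $\bar Z$ by torus orbits gives
$$e(\mathcal M)=e(\bar Z)-\sum_{\tau}e(\bar Z\cap O_\tau),$$
the sum ranging over orbits $O_\tau\subset X^c_\Sigma$; on each one $\bar Z\cap O_\tau$ is a generic complete intersection in $\CC^{\dim O_\tau}$ with Newton data the faces of the $(A_i,B_i)$. Applying the classical Khovanskii formula on $\CC^n$ and on the smaller tori $O_\tau$ (equivalently, iterating the same decomposition) rewrites the right-hand side as an alternating sum of ordinary mixed volumes of faces of the bounded truncations $\tilde A_i,\tilde B_i$. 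Since the relative mixed volume is by definition assembled from exactly the same ordinary mixed volumes by the same inclusion--exclusion, matching the two expressions term by term via multilinearity of $\MV$ yields the asserted value $(-1)^{n-k}\sum_{n_1+\cdots+n_k=n,\ n_i>0}(A_1,B_1)^{n_1}\cdots(A_k,B_k)^{n_k}$. Given the properness step, everything here is formal; this is the route taken in \cite{Edet},\cite{E11}.
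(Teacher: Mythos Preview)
The paper does not prove Theorem~\ref{relkbk}: it is stated with a citation to \cite{Edet} and \cite{E11} and immediately followed by examples, with no proof given. So there is no in-paper argument to compare your proposal against.

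That said, your sketch is a faithful outline of the approach in the cited references: the properness/transversality step (controlling the perturbed system near the non-compact boundary orbits via the boundedness of $A_i\setminus B_i$) is indeed the substantive analytic input, and the rest --- the $k=n$ intersection count via dual fans and the orbit-by-orbit inclusion--exclusion reducing to Khovanskii's classical formula --- is the standard bookkeeping. One point to tighten: in your properness step you say ``Bernstein's theorem then forbids solutions'' on the non-interior orbits, but Bernstein counts solutions rather than forbidding them; what you actually need is that, for $v\notin C$, the initial system $(s_i-\varepsilon_i)^v=0$ is a \emph{regular} complete intersection (so no solutions escape to the closure of the orbit), and that on $\partial U$ the unperturbed $s_i$ already have no common zero near the non-compact part. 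Once phrased this way the argument goes through as in \cite{Edet}.
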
 
\begin{example} 1. If $A$ is the positive orthant in $\Q^n$, and $\mathcal{C}$ is its polar cone, then $(X_\Sigma,X_\Sigma^c)=(\C^n,0)$, and we obtain the well known 
formula  (\cite{kouchn} for $k=1$ and \cite{oka} in general)  
for the Milnor number of a non-degenerate complete intersection germ in $(\C^n,0)$.

2. Continuing Example \ref{exatoricpair2}.2, consider two sections $s_i=ux^{h_{i,1}}f_{i,1}(x)+vx^{h_{i,2}}f_{i,2}(x)$ of the line bundle $I_A$ and assume $f_{i,j}(0)\ne 0$. Then the Newton polyhedra of the sections $s_i$ are hatched on Figure \ref{picrelvol}. In particular, the Euler characteristic of the Milnor number of $s_1$ equals the volume of the first relative polyhedron on Figure \ref{picrelvol}, i.e. $h_{1,1}+h_{1,2}$. This agrees with the fact that the coordinate map $x:X_\Sigma\to\C$ identifies the Milnor fiber of $s_1$ with a disc with $h_{1,1}+h_{1,2}+1$ punctures. 

The Milnor fiber of $(s_1,s_2)$ is a finite set whose cardinality equals the intersection index of the divisors $s_1=0$ and $s_2=0$ at the compact component $\CP^1\times\{0\}$ of their intersection. This number equals the mixed volume of the two relative polyhedra on Figure \ref{picrelvol}, i.e. $\min(h_{1,1}+h_{2,2},\,h_{1,2}+h_{2,1})$. This agrees with the fact that the determinant of the matrix 
$\begin{bmatrix} x^{h_{1,1}}f_{1,1}(x) & x^{h_{1,2}}f_{1,2}(x)\\
x^{h_{2,1}}f_{2,1}(x) & x^{h_{2,2}}f_{2,2}(x)
\end{bmatrix}$ as a function of $x$ has a root of order $\min(h_{1,1}+h_{2,2},\,h_{1,2}+h_{2,1})$ at 0.
\end{example}

\subsection{A construction for local tropical characteristic classes} 
\begin{definition} Let $\mathcal{C}$ be an $n$-dimensional ambient cone. A function $\Phi:PP(\mathcal{C})\to\R$ is said to be a {\it polynomial starting from} $(A,B)\in PP(\mathcal{C})$ if there exist local fans $\mathcal{F}_i\in K_i(\mathcal{C})$ such that
$$\Phi(A',B')=\mathcal{F}_n\cdot[(A',B')]^n+\ldots+\mathcal{F}_0\cdot[(A',B')]^0\in KK_0(\mathcal{C})=\Q$$
for every $(A',B')\in PP(\mathcal{C})$ whose support functions satisfy the condition
$$A'(\cdot)-B'(\cdot)>A(\cdot)-B(\cdot).$$
\end{definition}
The multiplication in this definition is understood in the sense of Remark \ref{embrel}.
\begin{theorem} \label{thlocexist} In the setting of Definition \ref{defmilnor}, assume that the sections $s_i$ of the line bundles $I_{A_i}$ have Newton polyhedra $B_i\subset A_i$, such that $A_i\setminus B_i$ is bounded, and generic principal parts.

I. The Euler characteristic of the Milnor fiber of $s_1$ on the germ of an analytic set $S$ is a polynomial 
$$\mathcal{F}_n\cdot[(A',B')]^n+\ldots+\mathcal{F}_0\cdot[(A',B')]^0$$
starting from some relative polyhedron.

II. The coefficients $\mathcal{F}_i\in K^{n-i}(\mathcal{C})$ satisfy the properties of the tropical characteristic classes $\langle S\rangle_{n-i}\in K^{n-i}(\mathcal{C})$ of the set $S$ (Definition \ref{deftcc}). This in particular proves the existence of tropical characteristic classes.

III. The Euler characteristic of the Milnor fiber of $(s_1,\ldots,s_k)$ on $S$ equals the 0-dimensional component of the product
$$\langle S\rangle\cdot\langle (A_1,B_1)\rangle\cdot\ldots\cdot\langle (A_k,B_k)\rangle,$$
where $\langle (A,B)\rangle=\sum_{i>0} (-1)^{i-1} [(A,B)]^i$, and the value is taken in $KK_0(\mathcal{C})=\Q$.
\end{theorem}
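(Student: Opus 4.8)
The plan is to establish parts I, II and III in that order, using the relative Kouchnirenko--Bernstein--Khovanskii formula (Theorem \ref{relkbk}) as the only external input, together with the explicit construction of tropical characteristic classes from \cite{E17}, adapted to the ambient-cone setting. The essential point is that $KK_0(\mathcal{C}) = \Q$ for \emph{every} ambient cone $\mathcal{C}$, which allows us to make sense of the ``polynomial'' of Definition preceding the theorem even when $C \subsetneq L \otimes \Q$, i.e.\ exactly in the local case where $K_0(\mathcal{C}) = 0$ would otherwise destroy the argument. So the proof is a matter of carefully transporting the global construction through this enlarged target ring.

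For part I, I would first reduce to the case $S = T$ by replacing $\langle S \rangle$ with its characteristic class once the latter is known; but since that is circular, instead I would argue directly: the Euler characteristic $e\bigl(\{s_1 = \varepsilon_1\} \cap S \cap U \cap T\bigr)$ is computed, for a sufficiently fine simple $\mathcal{C}$-fan $\Sigma$ refining the minimal fan compatible with all the Newton polyhedra involved, by a Mayer--Vietoris/stratification argument on $X_\Sigma$ following A'Campo--Varchenko-type bookkeeping: decompose $X_\Sigma^c$ into orbits, and on each orbit use Theorem \ref{relkbk} (applied fiberwise, to the transversal initial parts $S_v$, $s_1^v$) to express the local contribution as a relative mixed volume $(A_1', B_1')^{n_1} \cdot (\text{fixed data})^{n - n_1}$. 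Summing over orbits and over $n_1 = 1, \ldots, n$ and using multilinearity of $\MV$, the total collapses into an expression $\sum_i \mathcal{F}_i \cdot [(A_1', B_1')]^i$ with $\mathcal{F}_i \in K_i(\mathcal{C})$ independent of $(A_1', B_1')$, valid as soon as $A_1'(\cdot) - B_1'(\cdot)$ dominates the ``defect'' polyhedron measuring where the initial parts of $s_1$ stop being generic — this is the starting relative polyhedron. The genericity hypothesis on the initial parts is what guarantees transversality on every orbit and hence the applicability of Theorem \ref{relkbk} at each stratum.

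For part II, I would verify the six defining properties of Definition \ref{deftcc} one by one for the coefficients $\mathcal{F}_i$, just as in \cite{E17}. Property 1 (top coefficient is $\Trop S$, correct Euler characteristic in the global case) follows by taking $A_1'$ very large so that the leading term $\mathcal{F}_d \cdot [(A_1', B_1')]^d$ dominates and comparing with Definition \ref{deftropsn}; Property 5 (compatibility with initial parts) follows because localizing the fan $\Sigma$ at $v$ turns the orbit bookkeeping above into the bookkeeping for $S_v$; Properties 3 and 4 (products and MacPherson direct images) follow from the corresponding multiplicativity and functoriality of relative mixed volumes and of Milnor fibers under the product of toric varieties and under epimorphisms of tori, respectively; Property 2 (the product formula $\langle gP \cap Q\rangle = \langle P\rangle\cdot\langle Q\rangle$ for generic $g$) is the deepest and is obtained by a Bertini-type genericity argument combined with Property 3 via the diagonal, exactly as in the global case; Property 6 (agreement with the Chern--Schwartz--MacPherson class when everything sits in a single $H^c(X_\Sigma)$) follows from the fact that in that situation the relative mixed volume computation reduces to intersection numbers of the CSM class against line-bundle classes, by the known characterization of CSM classes. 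Uniqueness of $\langle \cdot \rangle$ is automatic since Properties 1 and 2 already determine the class on all germs (tropicalizations plus products determine everything by the structure of $K(\mathcal{C})$).

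For part III, once parts I and II are in place, I would argue by induction on $k$. For $k = 1$ this is essentially the definition of the $\mathcal{F}_i$'s rewritten in the notation $\langle(A_1,B_1)\rangle = \sum_{i>0}(-1)^{i-1}[(A_1,B_1)]^i$, taking into account the sign $(-1)^{n-1}$ from Theorem \ref{relkbk}.I and the fact that $[(A_1,B_1)]^i$ has codimension $i$, so the $0$-dimensional component of $\langle S\rangle \cdot \langle(A_1,B_1)\rangle$ picks out precisely $\sum_i (-1)^{i-1}\langle S\rangle_{n-i}\cdot[(A_1,B_1)]^i$. For the inductive step, intersecting with one more hypersurface $\{s_{k+1} = \varepsilon_{k+1}\}$ corresponds, on the level of characteristic classes, to multiplying by $\langle(A_{k+1},B_{k+1})\rangle$: this is where I would invoke Property 2 of the tropical characteristic class, applied to $P = \{s_1 = \varepsilon_1\}\cap\cdots\cap\{s_k = \varepsilon_k\}\cap S$ and $Q = \{s_{k+1}=\varepsilon_{k+1}\}$ (a generic translate realizing the genericity of $\varepsilon_{k+1}$), together with Example-type identity $\langle\{s_{k+1}=\varepsilon_{k+1}\}\rangle = \langle(A_{k+1},B_{k+1})\rangle$ for a generic section, which is itself the $k=1$ case of part III applied with $S = T$. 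Matching the $0$-dimensional components and the signs $(-1)^{n-k}$ completes the induction.

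\medskip

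\textbf{Main obstacle.} The hardest step is Property 2 in part II — the multiplicativity $\langle gP \cap Q \rangle = \langle P \rangle \cdot \langle Q \rangle$ for generic $g \in T$ — because it requires a genericity (Bertini/Kleiman-type) statement for the translate $gP$ inside the non-compact toric pair $(X_\Sigma, X_\Sigma^c)$, ensuring that $gP$ meets $Q$ and all relevant orbits transversally and that the Euler characteristics behave multiplicatively; in the global case this rests on the transitive action of the torus, but in the local/relative case one must check that the relevant genericity can be achieved while staying within a fixed small tubular neighborhood $U$ of the compact part. I expect that the argument of \cite{E17} transfers, but this is the point where ``extends verbatim'' needs the most care, and where the enlarged target ring $KK(\mathcal{C})$ — and the lemma that products of relative fans are representative-independent — does the real work of keeping the intersection numbers well defined.
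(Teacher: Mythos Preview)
Your proposal is correct and matches the paper's approach, which is simply: the proof in \cite{E17}, Sections 2.4--2.6, carries over verbatim upon replacing tropical fans, polytopes, and the Kouchnirenko--Bernstein--Khovanskii formula by their relative versions. Your sketch is a faithful and considerably more detailed elaboration of what that one sentence entails, correctly identifying $KK_0(\mathcal{C})=\Q$ as the enabling fact and Property~2 as the step requiring the most care in the local setting.
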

The proof for the global case $\mathcal{C}=(\Q^n,\Z^n)$ is given in \cite{E17}, Sections 2.4-2.6. The proof in the general case is the same verbatim, adding ``relative'' to all tropical fans, polytopes and references to the Kouchnirenko--Bernstein--Khovanskii formula.

\section{Tropical nearby monodromy eigenvalues}

We come back to the assumptions and notation of Definition \ref{defemb} and choose a complex number $s=\exp{2\pi i k/m}$ for coprime $k$ and $m$, assuming that the denominator $m$ is not tautological in the following sense.
\begin{definition} The number $m$ is called a {\it tautological denominator} for $f:(\C^n,0)\to(\C,0)$, if $m$ divides one of $m_j$'s in the prime factorization $f=\prod_j f_j^{m_j}$.
\end{definition}

The aim in this section is to introduce tropical nearby monodromy eigenvalues of a singularity with a given embedded toric resolution, in the sense of Definition \ref{defemb}. The material is difficult to illustrate immediately, because the first substantial examples come from non-obvious singularities of four variables (see e. g. Example 7.4 in \cite{ELT}). However, t. n. e. of small codimension will be studied in a much more explicit fashion in the next section.

We summarize the setting of Definition \ref{defemb} in the following diagram:
$$\begin{tikzcd}
 & Y \arrow[r,symbol=\subset] \arrow{d}{} & W \arrow[r,symbol=\subset] \arrow{d}{} & Z \arrow{d}{\pi} \\
0  \arrow[r,symbol=\in] & X \arrow[r,symbol=\subset] & S \arrow[r,symbol=\subset] & \C^N \\
0  \arrow[r,symbol=\in, pos=0.75] \arrow[u,symbol=\mapsto] & \{f=0\} \arrow[r,symbol=\subset, pos=0.25] \arrow{u}{} & \mathcal{S} \arrow[swap]{r}{f} \arrow{u}{j} &\C
\end{tikzcd}
$$
Here $Z\to\C^N$ is a toric blow-up, $\mathcal{E}$ is the set of its orbits, and $Y\subset W$ are the strict transforms of $X\subset S$, intersecting every orbit $E\in\mathcal{E}$ by the smooth sets $Y_E=Y\cap E$ and $W_E=W\cap E$ respectively. The set of orbits in $\pi^{-1}(0)$ is denoted by $\mathcal{E}_0\subset\mathcal{E}$.

\subsection{A stratification of $W$}
In order to study monodromy of $f$ in terms of its embedded toric resolution, it is important to detect points of the total space $W$ at which the function $f\circ i^{-1}\circ\pi:W\to\C$ equals a monomial $x^M$ for some local coordinate function $x:W\to\C$. Such points form so called $V$-strata, as the subsequent lemma shows.
 
\begin{definition} I. A stratum of the embedded toric resolution $(j,\pi)$ is a connected component of the set $W_E\setminus Y$ for some orbit $E\in\mathcal{E}$. 

 II. A stratum $C$ is said to be {\it convenient}, if we have $\pi(C)\not\subset X$ (i.e. $f$ does not vanish on $j^{-1}\circ\pi(C)$). 

III. A non-convenient stratum $B\subset Z$ is called a {\it $V$-stratum}, if it is in the closure of a convenient stratum $C$  
such that $\dim C=\dim B+1$.
\end{definition}
\begin{remark} 1. A $V$-stratum is contained in a unique convenient stratum of the dimension greater by 1.

2. If $f$ is non-degenerate with respect to its Newton polyhedron $N$, then the convenient strata correspond to faces of $N$ that belong to coordinate planes of the same dimension, and the $V$-strata correspond to the $V$-faces of $N$ (including the unbounded ones, see Definition \ref{defvface} below).
\end{remark}

\begin{lemma}\label{lacampo} Let $E$ be an orbit of $Z$, and $a\in W_E$.  Then, in a small neighborhood of $a$, the function $f\circ i^{-1}\circ\pi:W\to\C$ can be represented as $x^{M_E(a)},$ where $M_E(a)$ is a positive integer number and $x$ is a local coordinate function on $W$, 
if and only if

1) the number $M_E(a)$ is a tautological denominator, or

2) the point $a$ belongs to a $V$-stratum. 
\end{lemma}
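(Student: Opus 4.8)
The plan is to reduce both implications to a local monomial normal form for the pulled-back function $g:=f\circ i^{-1}\circ\pi$ near $a$, and then to read off everything from the irreducible components of the zero divisor of $g$.

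\emph{Local normal form.} Let $\sigma$ be the cone of the fan of $Z$ defining the orbit $E$, with rays $\rho_1,\dots,\rho_k$ (so $\codim E=k$) and corresponding toric divisors $D_{\rho_1},\dots,D_{\rho_k}$. Since $W$ is smooth and transversal to every orbit, one can choose coordinates $u_1,\dots,u_k,s_1,\dots,s_{n-k}$ on the germ $(W,a)$ with $W_E=\{u_1=\cdots=u_k=0\}$ and $\{u_j=0\}=W\cap D_{\rho_j}$. Extending $f\circ i^{-1}$ to a holomorphic function near $0\in\C^N$, its pullback $g$ is, near $a$, a monomial in the $u_j$ times the strict transform; writing $f=\prod_l f_l^{m_l}$ and using that $Y=\bigcup_l Y_l$ ($Y_l$ the strict transform of $\{f_l=0\}$) is smooth, at most one component $Y_{l_0}$ of $Y$ passes through $a$, with local equation $h$ (a coordinate on $(W,a)$). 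Setting $m:=m_{l_0}$ if $a\in Y$ and $m:=0$ otherwise, and writing $\nu_j\ge 0$ for the vanishing order of $f\circ i^{-1}$ along $D_{\rho_j}$, transversality gives
\[
 g \;=\; v\cdot u_1^{\nu_1}\cdots u_k^{\nu_k}\cdot h^{\,m}\qquad\text{near }a,
\]
with $v$ a unit and $u_1,\dots,u_k,h$ part of a coordinate system on $(W,a)$; in particular the divisor of $g$ at $a$ is $\sum_{\nu_j>0}\nu_j\{u_j=0\}+m\{h=0\}$, a sum of pairwise distinct smooth primes.

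\emph{Criterion and forward direction.} Under the (necessary) convention that ``$x$ is a local coordinate function'' means $x(a)=0$ — otherwise $g$ is a unit and the exponent is undetermined — the relation $g=x^{M}$ with $M\ge 1$ holds iff the divisor of $g$ at $a$ is $M$ times a single smooth prime, i.e., by the normal form, iff exactly one of $\nu_1,\dots,\nu_k,m$ is positive; this unique positive exponent is then $M_E(a)$, an honest coordinate $x$ being obtained by absorbing the unit $v$ into an $M$-th root. If that exponent is $m$, then $a\in Y$, $M_E(a)=m_{l_0}$ divides $m_{l_0}$, so $M_E(a)$ is a tautological denominator and (1) holds (and $a\in Y$ lies in no stratum, so (2) correctly fails). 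If that exponent is some $\nu_{j_0}$, then $a\notin Y$ lies in a unique stratum $C\subset W_E\setminus Y$; since $\nu_{j_0}>0$, $g$ vanishes identically on $W_E$ near $a$, so $C$ is non-convenient. Let $E'$ be the orbit of the facet of $\sigma$ obtained by deleting $\rho_{j_0}$; then $W_E\subset\overline{W_{E'}}$, near $a$ we have $W_{E'}=\{u_j=0:j\ne j_0\}\setminus\{u_{j_0}=0\}$, and there $g=v\cdot u_{j_0}^{\nu_{j_0}}$ is nowhere zero (all other $\nu_j$ vanish and $m=0$). Hence the stratum $C'\subset W_{E'}$ with $a\in\overline{C'}$ is convenient, $\dim C'=\dim C+1$, and $C\subset\overline{C'}$, so $C$ is a $V$-stratum and (2) holds.

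\emph{Converse and the main obstacle.} For the converse it suffices to derive $g=x^{M}$ from (2), since the hypothesis of (1) already presupposes such a representation. If $a$ lies in a $V$-stratum $B\subset W_E\setminus Y$, then $a\notin Y$ (so $m=0$), $B$ is non-convenient, and $B\subset\overline{C'}$ for a convenient stratum $C'$ with $\dim C'=\dim B+1$; comparing codimensions, $C'\subset W_{E'}$ for the orbit $E'$ of a facet of $\sigma$, say obtained by deleting $\rho_{j_1}$, so near $a$ the restriction $g|_{W_{E'}}$ equals $v\cdot u_{j_1}^{\nu_{j_1}}$ if $\nu_j=0$ for all $j\ne j_1$ and vanishes identically otherwise. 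Convenience of $C'$ forces the former, non-convenience of $B$ forces some $\nu_j>0$, hence $\nu_{j_1}>0$; so exactly one of $\nu_1,\dots,\nu_k,m$ is positive and the criterion gives $g=x^{\nu_{j_1}}$ near $a$. The step I expect to be the main obstacle is the local normal form: carefully checking that transversality of the smooth strict transforms $W,Y$ to the toric stratification of $Z$ really yields coordinates $u_j,h$ on $(W,a)$ in which $g$ is literally a unit times a monomial, and that the $W\cap D_{\rho_j}$ and $Y_{l_0}$ are distinct smooth hypersurfaces of $W$ in normal crossings. Granting this, the remainder is a routine dictionary between ``exactly one exponent is positive'' and the combinatorics of facets of $\sigma$, plus the elementary extraction of $M$-th roots of the unit factor.
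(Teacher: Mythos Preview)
Your proof is correct and follows essentially the same approach as the paper's: choose local coordinates on $(W,a)$ adapted to the toric stratification and to $Y$, so that $g$ becomes a unit times a monomial, and then observe that $g=x^{M}$ holds iff exactly one exponent is positive, with the two cases (exponent coming from $Y$ vs.\ from a single toric divisor) corresponding to (1) and (2). You are more thorough than the paper---which writes only the forward implication---in spelling out the converse from (2) and in isolating the local normal form as the place where the transversality assumptions of Definition~\ref{defemb} are used.
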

\begin{proof} In a small neighborhood of $a$, choose a coordinate system $y_1,\ldots,y_n$ on $W$, such that 

-- $W_E$ is the coordinate plane $y_1=\ldots=y_k=0$, 

-- the adjacent sets of the form $W_{E'}$ are parameterized by the subsets $I\subset\{1,\ldots,k\}$ and are given by the conditions $y_i=0$ for $y\in I$ and $y_i\ne 0$ for $i\notin I$, and 

-- $Y$ is empty or given by the equation $y_n=0$.  

There are two options for the coordinate function $x$ in the statement of the lemma: it can either be chosen as $y_{i_0}$ for some $i_0\leqslant k$, or as $y_n$. In the first case, we have $a\notin Y_E$, i.e. $a$ belongs to some stratum, and, moreover, this is a $V$-stratum (because it is in the closure of the convenient stratum given by the conditions $y_{i_0}\ne 0$ and $y_i=0$ for other $i\leqslant k$).
In the second case, $W_E\setminus Y_E$ is itself a convenient stratum near $a$, but $a\in Y_E$, thus $M_E(a)$ is a tautological denominator.
\end{proof}
Given an orbit $H$ of the toric resolution $Z$, its closure $\bar H$ is a toric variety, corresponding to a certain $\mathcal{C}_H$-fan $\Sigma_H$. The ambient cone $\mathcal{C}_H$ and the toric fan $\Sigma_H$ can be explicitly described in terms of the toric fan $\Sigma$ of the toric variety $Z$ and its ambient cone $\mathcal{C}$ (which is either the positive orthant in $\Q^N$ or opposite to it, depending on the convention in the beginning of Section \ref{schoice1}).

Namely, let $P$ be the cone of $\Sigma$, corresponding to the orbit $H$. Then the ambient cone $\mathcal{C}_H$ is the image of $\mathcal{C}$ under the quotient map $p:\Q^N\to\Q^N/($vector span of $P)$, and the toric fan $\Sigma_H$ consists of the images of the cones of $\Sigma$ adjacent to $P$ under the projection $p$.

\begin{definition} For an integer number $M>0$ and an orbit $E\subset Z$  
in the closure of an orbit $H\subset Z$, define the $\mathcal{C}_E$-germ of the semi-analytic set
$$\Phi_{M,H,E}=\left(\overline{\{a\,|\, M_H(a)=M\}}\cap E\right)\setminus Y.$$
\end{definition}
\begin{remark}\label{remphi}
1. In other words, the set $\Phi_{M,H,H}$ is the subset $W_H\setminus Y$ at which the locally constant function $M_H(\cdot)$ equals $M$, and $\Phi_{M,H,E}$ is the intersection of $E\setminus Y$ with the closure of $\Phi_{M,H,H}$.

2. By this definition, every set $\Phi_{M,H,E}$ is a union of strata. 
\end{remark}

The tropical characteristic class of $\Phi_{M,H,H}$ is the principal ingredient in the definition of tropical nearby monodromy eigenvalues, so it is important to compute them explicitly.
\begin{theorem}\label{thexplic} The characteristic class $\langle\Phi_{M,H,H}\rangle_d\in K^d(\mathcal{C}_H)$ can be expressed in terms of the Euler characteristics of strata of the embedded toric resolution as follows: it equals the sum of weighted fans $$\sum_E e(\Phi_{M,H,E}) P_E,$$ where $e$ is the Euler characteristic, $E$ ranges through all codimension $d$ toric orbits in the compact part of the toric variety $\bar H$, and $P_E$ are the corresponding codimension $d$ cones in the toric fan $\Sigma_H$.
\end{theorem}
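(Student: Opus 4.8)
The plan is to deduce the statement from Corollary \ref{schon}, after replacing the (a priori only semi-analytic) set $\Phi_{M,H,H}$ by honest analytic germs to which that corollary applies.

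First I would pin down $\Phi_{M,H,H}$ itself. Every point of the open orbit $H$ lies on the same collection of toric divisors of $Z$ — those whose closure contains $H$ — so, in the normal-crossings coordinates used in the proof of Lemma \ref{lacampo}, the monomialization of $f\circ i^{-1}\circ\pi$ along $W_H=W\cap H$ involves one fixed set of exceptional divisors. Consequently the function $M_H(\cdot)$ of Lemma \ref{lacampo} is not merely locally constant but constant on $W_H\setminus Y$, equal to the multiplicity of $f$ along the (unique, when it exists) exceptional divisor through $H$ that lies in $\{f=0\}$. Hence $\Phi_{M,H,H}$ is either empty — in which case every $\Phi_{M,H,E}$ is empty and both sides of the claimed identity vanish — or it equals $W_H\setminus Y$, with $M$ the common value of $M_H$; I would assume the latter. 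In particular $\mathbb{1}_{\Phi_{M,H,H}}=\mathbb{1}_{W_H}-\mathbb{1}_{Y_H}$ is a difference of characteristic functions of analytic $\mathcal{C}_H$-germs, so by linearity of $\langle\cdot\rangle$ we have $\langle\Phi_{M,H,H}\rangle=\langle W_H\rangle-\langle Y_H\rangle$.

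Second I would check that the compactification $(\bar H, X^c_{\Sigma_H})$ is smooth, in the sense preceding Corollary \ref{schon}, for the germs $W_H$ and $Y_H$. Recall that $Z$ is smooth (its fan is simple), so the orbit closure $\bar H$ is again a smooth toric variety, i.e. $\Sigma_H$ is simple. By property 3 of Definition \ref{defemb}, $W$ and $Y$ are smooth and transversal to every orbit of $Z$, in particular to every orbit contained in $\bar H$; since $\bar H$ is a smooth toric variety this forces $\overline{W_H}=W\cap\bar H$ and $\overline{Y_H}=Y\cap\bar H$ to be smooth and transversal to every orbit of $\bar H$, meeting each orbit $E$ in a union of connected components of $W_E$ and of $Y_E$ respectively. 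Applying Corollary \ref{schon} to $W_H$ and to $Y_H$ then gives
$$\langle\Phi_{M,H,H}\rangle_d=\sum_E\bigl(e(\overline{W_H}\cap E)-e(\overline{Y_H}\cap E)\bigr)\,P_E,$$
summed over the codimension-$d$ interior cones $P_E$ of $\Sigma_H$, equivalently over the codimension-$d$ orbits $E$ in the compact part of $\bar H$. It remains to recognise the coefficient as $e(\Phi_{M,H,E})$: transversality yields $\overline{Y_H}\cap E=\overline{W_H}\cap E\cap Y$ inside $E$, hence $(\overline{W_H}\cap E)\setminus(\overline{Y_H}\cap E)=(E\setminus Y)\cap\overline{\Phi_{M,H,H}}=\Phi_{M,H,E}$, and additivity of the Euler characteristic gives $e(\overline{W_H}\cap E)-e(\overline{Y_H}\cap E)=e(\Phi_{M,H,E})$.

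I expect the second step to be the main obstacle: transporting the smoothness and transversality of $W$ and $Y$ against the orbits of $Z$ to the corresponding properties of their closures inside the orbit closure $\bar H$ (and its dense torus $H$), together with the accompanying set-theoretic identity $\overline{Y_H}=\overline{W_H}\cap Y$ — equivalently, the statement that the boundary of $W_H$ in $\bar H$ is no larger than transversality forces. These are geometrically natural features of a toric blow-up along a simple fan, but they are precisely the places where one must carry out a local toric computation rather than invoke a formal argument. The first step is where Lemma \ref{lacampo} and the normal-crossings model enter, and everything after the second step is formal manipulation of the tropical characteristic class.
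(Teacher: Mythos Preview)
Your proposal is correct and is exactly the paper's approach, only unpacked: the paper's two-sentence proof invokes Remark~\ref{remphi}.1 (your first step, reducing $\Phi_{M,H,H}$ to $W_H\setminus Y_H$) to place the statement under Corollary~\ref{schon}, and Remark~\ref{remphi}.2 to record that $e(\Phi_{M,H,E})$ is a sum of Euler characteristics of strata. Your anticipated ``main obstacle'' is already absorbed by Definition~\ref{defemb}.3, which demands smoothness and transversality of $W$ and $Y$ against \emph{every} orbit $E\in\mathcal{E}$ --- in particular every orbit of $\bar H$ --- so that $\overline{W_H}=W\cap\bar H$ and $\overline{Y_H}=Y\cap\bar H$ are smooth and transversal to each orbit of $\bar H$ without any further local toric computation.
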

\begin{proof} By Remark \ref{remphi}.1, the formula is a special case of Corollary \ref{schon}. By Remark \ref{remphi}.2, the Euler characteristic of $\Phi_{M,H,E}$ is a sum of Euler characteristics of strata.
\end{proof}

\subsection{Tropical nearby eigenvalues} For $I\in\{1,\ldots,N\}$, let $\C^I$ and $\Q^I$ be the subsets of $\C^N$ and $\Q^N$ respectively, defined by vanishing of $j$-th coordinates for all $j\notin I$. We can naturally consider $\C^I$ as an affine toric variety, corresponding to the cone $C^I\subset\Q^I$, and let $\CC^I$ be the big torus in $\C^I$. 
\begin{remark} As always, the cone $C^I$ is either the positive orthant in $\Q^I$ or the opposite one, depending on the convention in the beginning of Section \ref{schoice1}.
\end{remark}
The cone $C^I$ comes with the lattice dual to the character lattice of $\CC^I$, and together they form an ambient cone that we denote by $\mathcal{C}^I$. Let $\mathcal{E}_I$ be the set of all orbits of $Z$ that project to $\CC^I$.

\begin{definition}\label{deftne} We say that a complex number $s=\exp{2\pi i k/m}$ for coprime $k$ and $m$ with a non-tautological denominator $m$ is a tropical nearby monodromy eigenvalue in $\C^I$, if we have
$$\Phi_{m,I}=\sum_{H\in\mathcal{E}_I,\, m|M} \pi_*\langle\Phi_{M,H,H}\rangle\ne 0$$ 
in $K(\mathcal{C}^I)$. It is said to be tropical in codimension $d$, if the codimension $d\leqslant |I|$ component of $\sum_{H\in\mathcal{V}_I,\, m|M} \pi_*\langle\Phi_{M,H,H}\rangle$ is non-zero in $K^{d}(\mathcal{C}^I)$.
\end{definition}
\begin{remark} 1. This terminology implies that the same nearby monodromy eigenvalue may be tropical in several different codimensions.

2. Tropical nearby monodromy eigenvalues by definition depend only on the adjacency and the Euler characteristics of precompact strata of the embedded toric resolution.
\end{remark}
\begin{theorem}\label{main} 
Every tropical nearby monodromy eigenvalue is indeed a nearby monodromy eigenvalue.
\end{theorem}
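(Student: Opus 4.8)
The plan is to reduce the statement to a version of the A'Campo formula applied to the nearby singularities $f_{x_0}$ for $x_0$ ranging over the preimages $\pi^{-1}(x_0)$ near the exceptional locus. Fix $s=\exp(2\pi i k/m)$ with $m$ a non-tautological denominator, and suppose $s$ is a tropical nearby monodromy eigenvalue in $\C^I$, i.e. $\Phi_{m,I}=\sum_{H\in\mathcal{E}_I,\,m\mid M}\pi_*\langle\Phi_{M,H,H}\rangle\ne 0$ in $K(\mathcal{C}^I)$. The first step is to unwind what non-vanishing of this tropical fan means geometrically: by Theorem \ref{thexplic}, each $\langle\Phi_{M,H,H}\rangle_d$ is computed from Euler characteristics $e(\Phi_{M,H,E})$ of unions of precompact strata $W_E\setminus Y$, and by Lemma \ref{lacampo} the strata with $M_H(\cdot)=M$ and $M$ divisible by $m$ are precisely (up to the tautological exclusion) the $V$-strata where $f\circ i^{-1}\circ\pi$ is locally a monomial $x^M$ with $m\mid M$. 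So the non-vanishing of $\Phi_{m,I}$ packages a non-trivial alternating sum of Euler characteristics of $V$-strata on which the $m$-th roots of unity appear as local monodromy.

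The second step is the monodromy extraction. For a point $x_0\in\C^n$ small, the Milnor monodromy zeta function $\zeta_{x_0}$ is, by A'Campo's formula applied to a (local, toric) log resolution of $f_{x_0}$, a product $\prod_\alpha (t^{N_\alpha}-1)^{-\chi_\alpha}$ over exceptional components, where $N_\alpha$ is the multiplicity of $f$ along the component and $\chi_\alpha$ the Euler characteristic of the associated stratum. One shows that choosing $x_0$ on (the image under $\pi$ of) a convenient stratum $C$ whose closure contains a $V$-stratum $B$ produces, near $\pi^{-1}(x_0)$, an exceptional divisor with multiplicity $M$ and an associated stratum whose Euler characteristic is exactly the local contribution recorded by $\Phi_{M,H,E}$; one must do this compatibly over all $H\in\mathcal{E}_I$ and all $M$ with $m\mid M$, and sum. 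The point of tropicalizing — taking $\pi_*$ into $K(\mathcal{C}^I)$ and demanding non-vanishing there rather than non-vanishing of a single Euler-characteristic sum — is that cancellation in the total A'Campo expression is controlled by the combinatorics of the ambient fan: the localization and balancing properties of tropical fans (Definition \ref{defloctrop}, Remark following it) guarantee that a tropical fan that is non-zero in $K(\mathcal{C}^I)$ cannot be made to vanish after projection to any quotient, which is exactly the statement that the corresponding alternating sum of multiplicities-and-Euler-characteristics is a genuine pole or root of $\zeta_{x_0}$ for suitable $x_0$, not an artifact of the resolution.

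Concretely, the steps in order: (1) translate $\Phi_{m,I}\ne 0$ via Theorem \ref{thexplic} into a statement about Euler characteristics of $V$-strata; (2) for each relevant orbit $H$ and each $M$ with $m\mid M$, choose a small base point $x_0$ on $\pi(C)$ for the convenient stratum $C$ containing the relevant $V$-stratum, and write the A'Campo formula for $\zeta_{x_0}$ in terms of the strata of $\pi$ restricted near $\pi^{-1}(x_0)$; (3) match the exponent of $(t^{m}-1)$ (equivalently, the order of $s$ as a root/pole) in $\zeta_{x_0}$ with the $d$-dimensional component of $\sum_{H,M}\pi_*\langle\Phi_{M,H,H}\rangle$, using $\pi_*$-functoriality of tropical characteristic classes (Definition \ref{deftcc}.4) and the product formula (Definition \ref{deftcc}.2) to identify the tropical intersection number with the A'Campo Euler-characteristic count; (4) invoke non-vanishing in $K(\mathcal{C}^I)$: a non-zero tropical fan has a non-zero pairing against some complementary-dimension tropical fan, which translates back into a non-zero exponent, hence $s$ is a root or pole of $\zeta_{x_0}$ for that choice of $x_0$ and arbitrarily small such $x_0$, so $s$ is a nearby monodromy eigenvalue.

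\textbf{Main obstacle.} The hard part will be step (3): establishing the precise dictionary between the A'Campo data of the nearby singularities $f_{x_0}$ and the tropical characteristic classes $\langle\Phi_{M,H,H}\rangle$, in particular showing that passing from a fixed base point to the whole family of base points on a stratum $C$ (and letting $C$ vary over $\mathcal{E}_I$) assembles exactly into the direct image $\pi_*$ in $K(\mathcal{C}^I)$, with the correct multiplicities $d$ from Definition \ref{defprojfan}. This requires a careful local analysis near $\pi^{-1}(0)$ of how convenient and $V$-strata organize the exceptional divisors of resolutions of the $f_{x_0}$, and it is here that the transversality and smoothness hypotheses of Definition \ref{defemb}.3 — which make $(X_\Sigma,X^c_\Sigma)$ a \emph{smooth} compactification in the sense preceding Corollary \ref{schon} — are used essentially, so that the Euler-characteristic bookkeeping is the Chern–Schwartz–MacPherson bookkeeping of Definition \ref{deftcc}.6 and no correction terms intrude.
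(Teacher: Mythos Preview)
Your outline has the right ingredients but misses the organizing idea that collapses the argument to two lines. The paper does not attempt to match tropical intersection numbers against A'Campo exponents directly, nor does it detect non-vanishing via a pairing with a complementary-dimension fan. Instead it introduces the single constructible function
\[
F_{M,I}=\sum_{H\in\mathcal{E}_I}\pi_*\Phi_{M,H,H}\in A(\mathcal{C}^I)
\]
(where $\pi_*$ is the MacPherson direct image of constructible functions, Definition \ref{defmacph}.2), and then observes that by the very functoriality axiom you cite in step (3), namely Definition \ref{deftcc}.4, one has
\[
\Phi_{m,I}=\sum_{m\mid M}\pi_*\langle\Phi_{M,H,H}\rangle=\Bigl\langle\sum_{m\mid M}F_{M,I}\Bigr\rangle.
\]
Since $\langle\cdot\rangle$ is linear and sends the zero function to zero, $\Phi_{m,I}\ne 0$ forces $\sum_{m\mid M}F_{M,I}(x_0)\ne 0$ for some $x_0\in\CC^I$ (and hence for $x_0$ arbitrarily close to $0$, since $F_{M,I}$ is a $\mathcal{C}^I$-germ). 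A direct application of A'Campo together with Lemma \ref{lacampo} then gives $\zeta_{x_0}(t)=\prod_M(1-t^M)^{F_{M,I}(x_0)}$, and the nonzero sum of exponents over $m\mid M$ makes $s$ a root or pole.

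So your ``main obstacle'' --- the dictionary between A'Campo data for varying $x_0$ and the pushforward $\pi_*$ on tropical fans --- is not an obstacle: it is exactly the content of axiom (4) in Definition \ref{deftcc}, applied once. Your step (4), extracting a point from a non-zero tropical fan via intersection pairing, is both unnecessary and slightly off: a non-zero pairing would give an Euler-characteristic integral, not a pointwise value, and you would still need to pass back to a point. The paper bypasses this entirely by staying at the level of constructible functions until the last moment.
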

For the proof, define the constructible function $F_{M,I}$ as the sum of the MacPherson direct images $\sum_{H\in\mathcal{V}_I} \pi_*\Phi_{M,H,H}$. Applying A'Campo's formula \cite{AC} for the Milnor monodromy $\zeta$-function $\zeta_{x_0}$ of the germ of $f$ at an arbitrary point $x_0\in\CC^I$, we obtain the following by Lemma \ref{lacampo}.
\begin{theorem}\label{acampo} We have $\zeta_{x_0}(t)=\prod_M (1-t^M)^{F_{M,I}(x_0)}$.
\end{theorem}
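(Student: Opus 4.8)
The plan is to read off $\zeta_{x_0}$ from A'Campo's formula applied to the resolution furnished by $W$ together with the pulled-back function $\tilde f:=f\circ i^{-1}\circ\pi\colon W\to\C$, and then to rewrite each factor of the resulting product using Lemma \ref{lacampo} and the definition of the MacPherson direct image.

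First I would check that $\rho:=\pi|_{W}$, viewed as a map $W\to i(\mathcal S)$, is an embedded resolution of $i(\{f=0\})$ near the fibre $\rho^{-1}(x_0)=\pi^{-1}(x_0)\cap W$. Since $\pi$ is a toric blow-up it is proper and an isomorphism over the torus $\CC^N$, so $\rho$ is proper and restricts to an isomorphism over $S\setminus X$; by Definition \ref{defemb} the strict transform $W$ is smooth in a neighbourhood of $\pi^{-1}(0)$ (which contains $\rho^{-1}(x_0)$ for the small $x_0$ relevant to the application) and is transversal to every orbit, so $\tilde f^{-1}(0)=Y\cup\bigcup_EW_E$ --- the union of the strict transform $Y$ with the exceptional divisors $W_E$ over the codimension-$1$ orbits $E$ along which $\tilde f$ vanishes --- is a simple normal crossings divisor near $\rho^{-1}(x_0)$. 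A'Campo's formula \cite{AC} then gives $$\zeta_{x_0}(t)=\prod_D\bigl(1-t^{N_D}\bigr)^{\,e(D^\circ\cap\rho^{-1}(x_0))},$$ where $D$ runs over the components of $\tilde f^{-1}(0)$, $N_D$ is the multiplicity of $\tilde f$ along $D$, $D^\circ=D\setminus\bigcup_{D'\ne D}D'$, and $e$ denotes the Euler characteristic. Grouping the components of equal multiplicity $M$ and using additivity of $e$ over the disjoint sets $D^\circ$, the exponent of $(1-t^M)$ becomes $e(L_M\cap\rho^{-1}(x_0))$, where $L_M:=\bigsqcup_{N_D=M}D^\circ$ is precisely the set of points of $W$ at which, in suitable local coordinates, $\tilde f$ equals $x^M$ for a single coordinate function $x$.

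Next I would compute $e(L_M\cap\rho^{-1}(x_0))$ stratum by stratum. Because $x_0\in\CC^I$ and $\pi^{-1}(\CC^I)$ is the union of the orbits in $\mathcal E_I$, the fibre $\rho^{-1}(x_0)$ lies inside $\bigcup_{H\in\mathcal E_I}W_H$, so $L_M\cap\rho^{-1}(x_0)$ splits as a disjoint union of the pieces $L_M\cap W_H\cap\rho^{-1}(x_0)$ over $H\in\mathcal E_I$. By Lemma \ref{lacampo}, for a non-tautological denominator $M$ a point of $W_H$ lies in $L_M$ precisely when it lies on a $V$-stratum along which $\tilde f$ has multiplicity $M$; thus $L_M\cap W_H$ equals $\Phi_{M,H,H}$, the union really runs over the orbits $H\in\mathcal V_I$ carrying such $V$-strata, and, since $\rho^{-1}(x_0)=\pi^{-1}(x_0)\cap W$, the definition of the MacPherson direct image (Definition \ref{defmacph}) identifies the exponent of $(1-t^M)$ with $\sum_{H\in\mathcal V_I}e\bigl(\Phi_{M,H,H}\cap\pi^{-1}(x_0)\bigr)=\sum_{H\in\mathcal V_I}(\pi_*\Phi_{M,H,H})(x_0)=F_{M,I}(x_0)$. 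Combining the three steps gives $\zeta_{x_0}(t)=\prod_M(1-t^M)^{F_{M,I}(x_0)}$. (The factors coming from tautological $M$, for which $L_M$ also collects points of the strict transform $Y$, contribute only roots of unity of tautological order and hence are irrelevant to the monodromy eigenvalues considered here; I would either restrict the product to non-tautological $M$ or note this explicitly.)

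The resolution check and the MacPherson bookkeeping are routine. The hard part is the third step: one must match, carefully and without loss, (i) the strata of $W$ meeting $\rho^{-1}(x_0)$ for $x_0\in\CC^I$, (ii) the local normal form extracted in the proof of Lemma \ref{lacampo}, which records orbit by orbit when $\tilde f$ becomes a one-coordinate monomial and with which exponent, and (iii) the definitions of $\Phi_{M,H,H}$ and of $\mathcal V_I$ --- in particular one must verify that for non-tautological $M$ the locus $L_M\cap W_H$ is exactly the union of the $V$-strata counted by $\Phi_{M,H,H}$, and that no component of $\tilde f^{-1}(0)$ contributing to the A'Campo product has been overlooked.
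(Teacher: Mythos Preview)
Your proposal is correct and follows exactly the approach the paper indicates: apply A'Campo's formula to the resolution $\pi|_W$ and use Lemma \ref{lacampo} to identify the single-multiplicity locus with the sets $\Phi_{M,H,H}$, then rewrite the exponents via the MacPherson direct image. The paper compresses all of this into one sentence, whereas you have faithfully unpacked the bookkeeping; your closing caveat about tautological $M$ is also apt (the paper glosses over it), and as you note it is harmless for the intended application since a non-tautological $m$ cannot divide any tautological $M$.
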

We can now prove Theorem \ref{main}.
\begin{proof} 
If $\Phi_{M,I}=\langle F_{M,I}\rangle\ne 0$, then there exists a point $x_0\in\CC^I$ such that $\sum_{m|M} F_{M,I}(x_0)\ne 0$. Thus, by Theorem \ref{acampo}, the denominator $m$ gives rise to a root or pole of $\zeta_{x_0}$.
\end{proof}

\subsection{Specializing to Newton polyhedra}\label{sspecnewton} Let us specialize to the case of a classical toric resolution $\pi:Z\to\C^n$ of a non-degenerate singularity $f:(\C^n,0)\to(\C,0)$ with respect to its Newton polyhedron $N$, i.e. assume that $i:(S,0)=(\C^n,0)\hookrightarrow(\C^N,0)$ is the identity map.

\begin{definition} \label{defvface} A (not necessarily bounded) face $\Gamma\subset N$ is called a {\it $V$-face}, if it is contained in a coordinate subspace $\Q^J$ such that $|J|=\dim\,\Gamma+1$. More precisely, it is called a $V_I$-face, where $I\subset J$ is a unique minimal (by inclusion) subset such that the projection of $\Gamma$ along $\R^I$ is bounded. 
\end{definition}
\begin{remark}\label{remvi} The image of $\Gamma$ under the projection along $\R^I$ is a bounded $V$-face of the image $N^I$ of the polytope $N$. Conversely, every $V_I$-face is the preimage of a bounded $V$-face under the projection $N\to N^I$.
\end{remark}
\begin{example} A $V_I$-face $\Gamma$ and its image $A$ under the projection along $\R^I$ are shown in bold on Figure \ref{picvi} for $I=\{1\}$ and $J=\{1,2\}$.
\end{example}
\begin{figure}
\centering\includegraphics{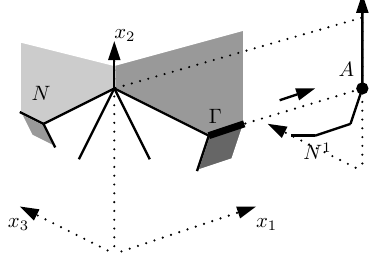}{\skipfig{
\psscalebox{1.0 1.0} 
{
\begin{pspicture}(0,-2.1114144)(6.22,2.1114144)
\definecolor{colour0}{rgb}{0.6,0.6,0.6}
\definecolor{colour1}{rgb}{0.4,0.4,0.4}
\definecolor{colour2}{rgb}{0.8,0.8,0.8}
\psline[linecolor=white, linewidth=0.04, fillstyle=solid,fillcolor=colour0](0.6,0.111414514)(0.2,0.3114145)(0.4,-0.08858548)(0.8,-0.28858548)(0.6,0.111414514)
\psline[linecolor=white, linewidth=0.04, fillstyle=solid,fillcolor=colour1](3.4,-0.08858548)(3.2,-0.68858546)(3.8,-0.48858547)(4.0,0.111414514)
\psline[linecolor=white, linewidth=0.04, fillstyle=solid,fillcolor=colour2](1.8,1.1114144)(1.8,0.7114145)(0.6,0.111414514)(0.2,0.3114145)(0.2,1.5114145)(1.8,1.1114144)
\psline[linecolor=white, linewidth=0.04, fillstyle=solid,fillcolor=colour0](1.8,1.1114144)(1.8,0.7114145)(3.4,-0.08858548)(4.0,0.111414514)(4.0,1.7114146)(1.8,1.1114144)
\psline[linecolor=black, linewidth=0.04, linestyle=dotted, dotsep=0.10583334cm, arrowsize=0.05291667cm 4.0,arrowlength=1.5,arrowinset=0.0]{->}(1.8,-2.0885856)(1.8,1.5114145)
\psline[linecolor=black, linewidth=0.04, linestyle=dotted, dotsep=0.10583334cm, arrowsize=0.05291667cm 4.0,arrowlength=1.5,arrowinset=0.0]{->}(1.8,-2.0885856)(4.2,-1.2885854)
\psline[linecolor=black, linewidth=0.04, linestyle=dotted, dotsep=0.10583334cm, arrowsize=0.05291667cm 4.0,arrowlength=1.5,arrowinset=0.0]{->}(1.8,-2.0885856)(0.2,-1.2885854)
\psline[linecolor=black, linewidth=0.04](1.8,0.7114145)(0.6,0.111414514)
\psline[linecolor=black, linewidth=0.04](1.8,0.7114145)(1.2,-0.48858547)
\psline[linecolor=black, linewidth=0.04](1.8,0.7114145)(2.4,-0.48858547)
\psline[linecolor=black, linewidth=0.04](1.8,0.7114145)(3.4,-0.08858548)
\psline[linecolor=black, linewidth=0.12](3.4,-0.08858548)(4.0,0.111414514)
\psline[linecolor=black, linewidth=0.04](3.4,-0.08858548)(3.2,-0.68858546)
\psline[linecolor=black, linewidth=0.04](1.8,0.7114145)(1.8,1.3114145)
\psline[linecolor=black, linewidth=0.04](0.6,0.111414514)(0.2,0.3114145)
\psline[linecolor=black, linewidth=0.04](0.6,0.111414514)(0.8,-0.28858548)
\psline[linecolor=black, linewidth=0.04, arrowsize=0.05291667cm 4.0,arrowlength=1.5,arrowinset=0.0]{->}(4.6,0.51141447)(5.2,0.7114145)
\psline[linecolor=black, linewidth=0.04, linestyle=dotted, dotsep=0.10583334cm, arrowsize=0.05291667cm 4.0,arrowlength=1.5,arrowinset=0.0]{->}(6.0,-0.68858546)(4.4,0.111414514)
\psline[linecolor=black, linewidth=0.04, linestyle=dotted, dotsep=0.10583334cm, arrowsize=0.05291667cm 4.0,arrowlength=1.5,arrowinset=0.0]{->}(6.0,-0.68858546)(6.0,2.3114145)
\psline[linecolor=black, linewidth=0.04, linestyle=dotted, dotsep=0.10583334cm](1.8,0.7114145)(6.0,1.9114145)
\psline[linecolor=black, linewidth=0.04, linestyle=dotted, dotsep=0.10583334cm](4.0,0.111414514)(6.0,0.7114145)
\psline[linecolor=black, linewidth=0.04](6.0,2.1114144)(6.0,0.7114145)(5.8,0.111414514)
\psline[linecolor=black, linewidth=0.04](5.8,0.111414514)(5.2,-0.08858548)
\psline[linecolor=black, linewidth=0.04](5.2,-0.08858548)(4.8,-0.08858548)
\rput[bl](4.2,-1.6885855){$x_1$}
\rput[bl](1.8,1.5114145){$x_2$}
\rput[bl](0.0,-1.6885855){$x_3$}
\rput[bl](5.0,-0.5){$N^1$}
\rput[bl](0.4,0.5114145){$N$}
\rput[bl](5.6,0.9114145){$A$}
\rput[bl](3.4,0.11141453){$\Gamma$}
\pscircle[linecolor=black, linewidth=0.12, dimen=middle](6.0,0.7114145){0.04}
\end{pspicture}
}}}
\caption{A $V_I$-face $\Gamma$ for $I=\{1\}$ is shown in bold}\label{picvi}
\end{figure}

For a $V_I$-face $\Gamma\subset\Q^J$, let $m_\Gamma$ be its lattice distance to 0 in $\Q^J$, i.e. $|\Z^J/U_\Gamma|$, where $U_\Gamma$ is the vector subspace generated by pairwise differences of the points of $\Gamma$.  There is a natural projection $\pi$ from the dual space $U_\Gamma^*$ to the dual space of $\Q^I$. We denote $C_\Gamma=\pi^{-1}(C^I)$, and this cone together with the lattice dual to $U_\Gamma\cap\Z^n$ forms an ambient cone $\mathcal{C}_\Gamma$. 

Recall that, for a polyhedron $P$ in $\Q^p$, we denote the formal sum $\sum_p (-1)^{p-1}[P]^p$ by $\langle P\rangle$, see Definition \ref{defdual} for the dual tropical fan $[P]^p$.  
For a $V$-face $\Gamma$, consider the element $\langle \Gamma\rangle\in K(\mathcal{C}_\Gamma)=\bigoplus_k K^k(\mathcal{C}_\Gamma)$.
\begin{definition}\label{deftropicalpoly}
We say that a complex number $s=\exp{2\pi i k/m}$ for coprime $k$ and $m$ with a non-tautological denominator $m$ is a tropical nearby monodromy eigenvalue of the polyhedron $N$ in $\C^I$, if we have
$$\Phi_{m,I}=\sum_{\Gamma\subset N \mbox{ \scriptsize is a } V_I-\mbox{\scriptsize face,}\atop m|m_\Gamma} \pi_*\langle \Gamma\rangle\ne 0.$$ 
It is said to be tropical in codimension $d$, if the codimension $d$ component of $\sum_{\Gamma\subset N \mbox{ \scriptsize is a } V_I-\mbox{\scriptsize face}\atop m|m_\Gamma} \pi_*\langle \Gamma\rangle$ is non-zero in $K^{d}(C^I)$.
\end{definition}
According to the Example \ref{tccnondeg}, the sum $\langle \Gamma\rangle$ equals the tropical characteristic class of a generic hypersurface with the Newton polyhedron $\Gamma$. This fact makes Definition \ref{deftropicalpoly} the special case of Definition \ref{deftne}, and Theorem \ref{main} implies the following.
\begin{corollary}\label{tropicalpoly} If $s$ is a tropical nearby monodromy eigenvalue of the polyhedron $N$ in $\C^I$, then, for every non-degenerate singularity $f:(\C^n,0)\to(\C,0)$ with the Newton polyhedron $N$, there exists $x_0\in\CC^I$ arbitrary close to 0 such that $s$ is a monodromy eigenvalue of the germ of $f$ at $x_0$.
\end{corollary}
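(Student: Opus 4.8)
The plan is to deduce the statement from Theorem~\ref{main}, after checking that, for the classical toric resolution of a non-degenerate $f$, Definition~\ref{deftropicalpoly} is just Definition~\ref{deftne} spelled out combinatorially. First I would record the (easy) point that a non-degenerate germ $f:(\C^n,0)\to(\C,0)$ with Newton polyhedron $N$ admits an embedded toric resolution in the sense of Definition~\ref{defemb} with $i=\id$ and $\C^N=\C^n$: take any simple $\mathcal{C}$-fan $\Sigma$ compatible with $N$ (it exists by \cite{toroidal}), let $\pi\colon Z=X_\Sigma\to\C^n$, and let $\mathcal{E}_\varnothing$ be the orbits over $0$; non-degeneracy is exactly the assertion (Example~\ref{exanewt1}.3 and Example~\ref{tccnondeg}.1) that the strict transforms $Y\subset W=Z$ are smooth and transversal to all torus orbits. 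So Theorem~\ref{main} applies to $\pi$, and it is enough to prove two things: that a t.~n.~e.\ of $N$ in $\C^I$ in the sense of Definition~\ref{deftropicalpoly} is a t.~n.~e.\ in $\C^I$ for $\pi$ in the sense of Definition~\ref{deftne}, and that the witnessing point produced by A'Campo's formula (Theorem~\ref{acampo}) can be chosen arbitrarily close to $0$.

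The heart of the argument is the matching of the two sums. I would set up the dictionary between orbits $H$ of $Z$, cones $\sigma_H\in\Sigma$, and faces $N^{\sigma_H}$ of $N$, and use the local normal form $f\circ\pi=\prod_j y_j^{N(v_j)}\cdot(\text{unit or strict-transform equation})$ near $H$ (the $v_j$ generating $\sigma_H$, $N(\cdot)$ the support function) together with Lemma~\ref{lacampo} to see which $\Phi_{M,H,H}$ contribute. The point is that since $m$ is non-tautological and $m\mid M$, the number $M$ is again non-tautological (as $m\mid M\mid m_j$ would force $m\mid m_j$), so by Lemma~\ref{lacampo} only $V$-strata survive; by the Remark after the definition of a $V$-stratum and by Remark~\ref{remvi}, these $V$-strata are governed by the $V_I$-faces $\Gamma$ of $N$, the relevant orbits being those whose cones lie in the normal fan of such a $\Gamma$, with $M=M_H=m_\Gamma$. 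For a contributing $H$ one identifies, as a $\mathcal{C}_H$-germ, $\Phi_{m_\Gamma,H,H}=T_H\setminus Y_H$, where $Y_H$ is a non-degenerate hypersurface whose Newton polyhedron is the bounded model of $\Gamma$ in the sense of Remark~\ref{remvi}.

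It then remains to compute the characteristic classes. By linearity of the tropical characteristic class and Example~\ref{tccnondeg}, $\langle\Phi_{m_\Gamma,H,H}\rangle=\langle T_H\rangle-\langle Y_H\rangle=1_{\mathcal{C}_H}-\langle\Gamma\rangle$, where $1_{\mathcal{C}_H}$ is the unit of $K(\mathcal{C}_H)$ and $\langle\Gamma\rangle=\sum_{d\geqslant1}(-1)^{d-1}[\Gamma]^d$; the ambient-cone bookkeeping of Definition~\ref{deftropicalpoly} (the cone $\mathcal{C}_\Gamma=\pi^{-1}(C^I)$ and the projection $U_\Gamma^*\to(\Q^I)^*$) is designed so that $\pi_*$ sends $\langle\Phi_{m_\Gamma,H,H}\rangle$ and $\langle\Gamma\rangle$ to $K(\mathcal{C}^I)$ compatibly. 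The ``$1$''-term pushes forward to the top-dimensional fan of $\mathcal{C}_H$, of dimension $\dim\Gamma$, which by Definition~\ref{defprojfan} vanishes in $K(\mathcal{C}^I)$ except in the borderline case $|I|=\dim\Gamma$, where it has to be cancelled by grouping $\Gamma$ with its proper faces and with all cones of $\Sigma$ subdividing the normal fan of $\Gamma$: here one expands $\langle\Phi_{M,H,H}\rangle_d$ by Theorem~\ref{thexplic} as a sum of Euler characteristics of strata and evaluates those Euler characteristics via the relative Kouchnirenko--Bernstein--Khovanskii formula (Theorem~\ref{relkbk}, Example~\ref{tccnondeg}.4), recognising the result as the dual-fan expression of Definition~\ref{defdual}. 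The outcome is that $\sum_{H\in\mathcal{E}_I,\,m\mid M}\pi_*\langle\Phi_{M,H,H}\rangle=\pm\sum_{\Gamma,\,m\mid m_\Gamma}\pi_*\langle\Gamma\rangle$ degree by degree, so the two definitions of a t.~n.~e.\ for $\pi$ coincide. Finally, since a $V_I$-face is unbounded exactly along $\R^I$ (the minimality in its definition), the support of the constructible function $F_{m,I}$ of Theorem~\ref{acampo} accumulates at $0\in\C^I$; hence if $\Phi_{m,I}\neq0$ then $F_{m,I}$ is non-zero at points $x_0\in\CC^I$ arbitrarily close to $0$, and Theorem~\ref{acampo} exhibits $s$ as a monodromy eigenvalue of the germ of $f$ at such $x_0$.

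The main obstacle is the characteristic-class matching of the third step: following how a single $V_I$-face $\Gamma$ corresponds, through a possibly non-trivial compatible subdivision $\Sigma$, to a whole family of orbits of various codimensions, and verifying that their Euler-characteristic data reassemble — via Theorem~\ref{thexplic}, the relative Kouchnirenko--Bernstein--Khovanskii formula, and the dual fans of Definition~\ref{defdual} — into precisely $\pm\langle\Gamma\rangle$ in $K(\mathcal{C}^I)$, together with the correct handling of the ``$1$''-terms and of the ambient-cone/direct-image bookkeeping. This is also the only place where non-degeneracy is genuinely used: it is what makes the contributing strata complements of generic hypersurface arrangements, so that their Euler characteristics are controlled by (relative) mixed volumes.
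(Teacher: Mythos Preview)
Your approach is correct and matches the paper's: the paper does not prove Corollary~\ref{tropicalpoly} separately but simply asserts, in the sentence preceding Definition~\ref{deftropicalpoly}, that the identity $\langle\Gamma\rangle=\langle\{f^\Gamma=0\}\rangle$ of Example~\ref{tccnondeg}.2 ``allows to rewrite Definition~\ref{deftne}'' in the non-degenerate case, whereupon the corollary follows from Theorem~\ref{main}. You have supplied the detailed verification of that rewriting that the paper leaves to the reader; the difficulties you flag are genuine but resolve via Theorem~\ref{thexplic} (equivalently Corollary~\ref{schon}) once one notes that, with $W=Z$, the exponent $M_H$ is constant on each orbit, so every $\Phi_{M,H,H}$ is either all of $H\setminus Y_H$ or empty, and the sign ambiguity you leave open resolves to $+$ since the paper uses the same symbol $\Phi_{m,I}$ in both definitions.
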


\section{Low codimension}

In this section we make the notion of tropical nearby monodromy eigenvalues in codimension 0 and 1 more explicit. This simplification is based on the notion of a fiber polyhedron.

\subsection{Fiber polytopes} Denote $\{1,\ldots,n\}\setminus I$ by $\bar I$. The coordinate projection of a polyhedron $N\subset\Q^n$ along $\Q^{I}$ and its image will be denoted by $\pi_I:N\to N^I\subset\Q^{\bar I}$.

\begin{definition}[\cite{bs}] For every polytope $B\subset N^I$, the fiber polyhedron $\int_B N$ of $N$ over $B$ is defined (up to a shift) as the set of all points of the form $(1+\dim B)\int_B s(x)\; dx$, where $s$ runs over all continuous sections $B\to N$ of the projection $\pi_I:N\to N^I$, and $dx$ is the lattice volume form on $B$.
\end{definition}
\begin{remark} 1. The fiber polyhedron is contained in an affine plane parallel to $\Q^I$, so it is convenient to consider instead its isomorphic image under the projection $\Q^n\to\Q^I$ along $\Q^{\bar I}$. 

2. The multiplier $(1+\dim B)$ is introduced to ensure that for every lattice polytope $B$ the fiber polyhedron is also a lattice polyhedron.
\end{remark}
\begin{example} If $\bar I$ consists of one element, and $N$ is bounded, then $\int_{N^I} N$ in the line $\Q^{\bar I}$ is a segment, and its length equals the lattice volume of $N$. Figure \ref{picfiber} gives an example of a two-dimensional fiber polyhedron with $I=\{1,2\}$ and $n=3$.
\end{example}
\begin{figure}\centering\includegraphics{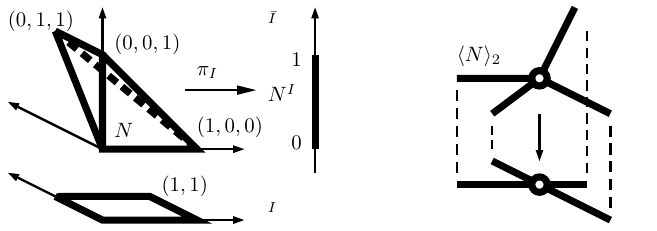}\skipfig{
\psscalebox{1.0 1.0} 
{
\begin{pspicture}(0,-1.8529569)(10.92,1.8529569)
\psline[linecolor=black, linewidth=0.12](0.8,1.4261447)(1.6,1.0261447)(1.6,-0.5738553)(3.2,-0.5738553)(1.6,1.0261447)
\psline[linecolor=black, linewidth=0.12](0.8,1.4261447)(1.6,-0.5738553)
\psline[linecolor=black, linewidth=0.12, linestyle=dashed, dash=0.17638889cm 0.10583334cm](0.8,1.4261447)(3.2,-0.5738553)
\psline[linecolor=black, linewidth=0.04, arrowsize=0.05291667cm 2.0,arrowlength=1.4,arrowinset=0.0]{->}(1.6,-0.5738553)(4.0,-0.5738553)
\psline[linecolor=black, linewidth=0.04, arrowsize=0.05291667cm 2.0,arrowlength=1.4,arrowinset=0.0]{->}(1.6,-0.5738553)(0.0,0.22614472)
\psline[linecolor=black, linewidth=0.04, arrowsize=0.05291667cm 2.0,arrowlength=1.4,arrowinset=0.0]{->}(1.6,-0.5738553)(1.6,1.8261447)
\psline[linecolor=black, linewidth=0.04, arrowsize=0.05291667cm 2.0,arrowlength=1.4,arrowinset=0.0]{->}(1.6,-1.7738553)(4.0,-1.7738553)
\psline[linecolor=black, linewidth=0.04, arrowsize=0.05291667cm 2.0,arrowlength=1.4,arrowinset=0.0]{->}(1.6,-1.7738553)(0.0,-0.97385526)
\psline[linecolor=black, linewidth=0.04, arrowsize=0.05291667cm 2.0,arrowlength=1.4,arrowinset=0.0]{->}(5.2,-0.97385526)(5.2,1.8261447)
\psline[linecolor=black, linewidth=0.12](5.2,1.0261447)(5.2,-0.5738553)
\psline[linecolor=black, linewidth=0.12, dotsize=0.07055555cm 2.0]{-cc}(0.8,-1.3738552)(1.6,-1.7738553)(3.2,-1.7738553)(2.4,-1.3738552)(0.8,-1.3738552)
\rput[bl](4.4,1.4261447){$\Q^{\bar I}$}
\rput[bl](4.4,0.22614472){$N^I$}
\rput[bl](4.8,-0.5738553){$0$}
\rput[bl](4.8,0.8261447){$1$}
\rput[bl](4.4,-1.7738553){$\Q^I$}
\rput[bl](2.6,-1.3738552){$(1,1)$}
\rput[bl](0.0,1.4261447){$(0,1,1)$}
\rput[bl](1.8,1.0261447){$(0,0,1)$}
\rput[bl](3.2,-0.3738553){$(1,0,0)$}
\psline[linecolor=black, linewidth=0.04, arrowsize=0.08cm 2.0,arrowlength=2.0,arrowinset=0.0]{->}(3.0,0.42614472)(4.2,0.42614472)
\rput[bl](3.2,0.6261447){$\pi_I$}
\psline[linecolor=black, linewidth=0.12](7.6,0.6261447)(9.0,0.6261447)(10.2,0.026144715)
\psline[linecolor=black, linewidth=0.12](8.2,0.026144715)(9.0,0.6261447)(9.6,1.8261447)
\psline[linecolor=black, linewidth=0.12](7.6,-1.1738553)(9.8,-1.1738553)(9.8,-1.1738553)
\psline[linecolor=black, linewidth=0.12](8.2,-0.77385527)(10.2,-1.7738553)
\pscircle[linecolor=black, linewidth=0.12, fillstyle=solid, dimen=inner](9.0,-1.1738553){0.06}
\pscircle[linecolor=black, linewidth=0.12, fillstyle=solid, dimen=inner](9.0,0.6261447){0.06}
\psline[linecolor=black, linewidth=0.04, arrowsize=0.05291667cm 2.0,arrowlength=1.4,arrowinset=0.0]{->}(9.0,0.026144715)(9.0,-0.77385527)
\psline[linecolor=black, linewidth=0.04, linestyle=dashed, dash=0.17638889cm 0.10583334cm](7.6,0.42614472)(7.6,-0.97385526)
\psline[linecolor=black, linewidth=0.04, linestyle=dashed, dash=0.17638889cm 0.10583334cm](8.2,-0.17385529)(8.2,-0.5738553)
\psline[linecolor=black, linewidth=0.04, linestyle=dashed, dash=0.17638889cm 0.10583334cm](10.2,-0.17385529)(10.2,-1.5738553)
\psline[linecolor=black, linewidth=0.04, linestyle=dashed, dash=0.17638889cm 0.10583334cm](9.8,1.4261447)(9.8,-0.97385526)
\rput[bl](1.8,-0.3738553){$N$}
\rput[bl](7.6,0.8261447){$\langle N\rangle_2$}
\end{pspicture}
}}
\caption{the fiber polygon of a three-dimensional polytope for $I=\{1,2\}$}\label{picfiber}
\end{figure}

We come back to the setting and notation of Definition \ref{deftropicalpoly}, assuming that $B=\pi_I(\Gamma)$.
\begin{proposition}\label{tccfib} 1) The codimension 0 component of the projection $(\pi_*\langle\Gamma\rangle)_0$ is the cone $C^I$ with the weight equal to the lattice volume of $B$.

2) The codimension 1 component of the projection $(\pi_*\langle\Gamma\rangle)_1$ is the (first) dual fan of the fiber polyhedron $\int_B\Gamma=\int_B N$.
\end{proposition}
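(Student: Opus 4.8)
Below is how I would attack the proof; I state it as a plan.

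The plan is to deduce both parts from the identity $\langle\Gamma\rangle=\langle V_\Gamma\rangle$ of Example \ref{tccnondeg} (with $V_\Gamma\subset T_\Gamma$ a generic hypersurface of Newton polyhedron $\Gamma$) together with property 4 of Definition \ref{deftcc}, applied to the torus epimorphism induced by $\pi$. Since $\R^I$ is the linear span of the recession cone of the $V_I$-face $\Gamma$, the inclusion $\R^I\hookrightarrow U_\Gamma$ is defined; dualizing it produces $\pi\colon U_\Gamma^*\to(\Q^I)^*$, hence a surjection of tori $\rho\colon T_\Gamma\twoheadrightarrow T^I$ whose kernel is a subtorus $K$ of dimension $\dim B$, and $\pi^{-1}(\mathcal C^I)=\mathcal C_\Gamma$. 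Property 4 then gives $\pi_*\langle\Gamma\rangle=\langle\rho_*[V_\Gamma]\rangle=\langle G\rangle$, where $G\colon T^I\to\Q$, $G(y)=e\bigl(V_\Gamma\cap\rho^{-1}(y)\bigr)$; so everything comes down to extracting the codimension $0$ and $1$ parts of the characteristic class of this constructible function.

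For Part 1, observe that for generic $y$ the fibre $\rho^{-1}(y)$ is a coset of $K$ and $V_\Gamma\cap\rho^{-1}(y)$ is a hypersurface of this $\dim B$-torus whose Newton polytope is the image of $\Gamma$ under the quotient $U_\Gamma\cap\Z^n\to(U_\Gamma\cap\Z^n)/\Z^I$ of character lattices, i.e.\ exactly $B=\pi_I(\Gamma)$; for generic $V_\Gamma$ it is non-degenerate for all $y$ in a dense open $T^I_0$. By the Kouchnirenko--Bernstein--Khovanskii formula (Example \ref{tccnondeg}.4) $G$ is therefore constant on $T^I_0$, equal to the lattice volume of $B$ up to the standard sign $(-1)^{\dim B-1}$, so $G=c\cdot 1_{T^I}+G'$ with $G'$ supported on the codimension $\geq1$ set $T^I\setminus T^I_0$. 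By property 1, $\langle G'\rangle$ has no codimension $0$ part; and $\langle 1_{T^I}\rangle=C^I$ with unit weight and no component of positive codimension -- this follows from Corollary \ref{schon} applied to a smooth toric compactification of $T^I$, since $T^I$ meets every proper orbit in a torus of positive dimension (Euler characteristic $0$) while the point-orbits produce the unit weight on $C^I$. Hence $(\pi_*\langle\Gamma\rangle)_0=c\cdot C^I$, which is Part 1 up to the stated sign, and $(\pi_*\langle\Gamma\rangle)_1=\langle G'\rangle_1$.

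Part 2 asks to identify $\langle G'\rangle_1\in K^1(\mathcal C^I)$ -- the dual fan of the codimension $1$ part of the jump locus of $y\mapsto e(V_\Gamma\cap\rho^{-1}(y))$, weighted by the jumps -- with $[\int_B\Gamma]$. Since $K^1(\mathcal C^I)$ is the Grothendieck group of polyhedra compatible with $\mathcal C^I$ (Remark \ref{remembed}), it suffices to match the intersection numbers of $\langle G'\rangle_1$ with dual fans of arbitrary test polyhedra $Q_1,\dots,Q_{|I|-1}$ against $\MV(\int_B\Gamma,Q_1,\dots,Q_{|I|-1})$. By Theorem \ref{thlocexist}.III the degree-$0$ part of $\langle G\rangle\cdot\langle Q_1\rangle\cdots\langle Q_{|I|-1}\rangle$ is the Euler characteristic of the Milnor fibre of generic sections with Newton polyhedra $Q_j$ taken against the constructible function $G=\rho_*[V_\Gamma]$; by the projection formula for the Euler characteristic along $\rho$ this equals the Euler characteristic of the Milnor fibre of the pulled-back sections on $V_\Gamma$, which (Theorem \ref{thlocexist}.III again, now on $V_\Gamma$) is the degree-$0$ part of $\langle\Gamma\rangle\cdot\langle\rho^*Q_1\rangle\cdots\langle\rho^*Q_{|I|-1}\rangle$, a combination of relative mixed volumes of $\Gamma$ and the preimage polyhedra $\rho^*Q_j$ (Theorem \ref{relkbk}). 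Subtracting the terms already explained by Part 1 isolates $(\pi_*\langle\Gamma\rangle)_1\circ[Q_1]\cdots[Q_{|I|-1}]$ as a single relative mixed volume of $\Gamma$ with the $\rho^*Q_j$, and the statement becomes the assertion that this equals $\MV(\int_B\Gamma,Q_1,\dots,Q_{|I|-1})$ -- precisely the defining property of the fiber polytope of Billera--Sturmfels, in its mixed-volume form. Finally $\int_B\Gamma=\int_B N$ because, $B$ being a $V$-face, $\Gamma$ is the full preimage of $B$ under $N\to N^I$ (Remark \ref{remvi}), so the continuous sections $B\to N$ are exactly those with image in $\Gamma$; this yields $(\pi_*\langle\Gamma\rangle)_1=[\int_B N]^1$.

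The hard part is this last reduction: converting the push-forward of a tropical characteristic class into a fiber polytope forces one to match, term by term, the relative mixed volumes produced by Theorem \ref{relkbk} with mixed volumes of fiber polytopes -- that is, to invoke (or re-derive in the relative/ambient-cone setting) the fiber-polytope calculus of Billera--Sturmfels and a mixed version of it. A self-contained but more laborious alternative avoids all external input: compute $\pi_*[\Gamma]^{\dim B}$ and $\pi_*[\Gamma]^{\dim B+1}$ directly from Definitions \ref{defdual} and \ref{defprojfan}, by determining which bounded faces $\Gamma^v$ of dimension $\dim B$, respectively $\dim B+1$, project onto a full-dimensional, respectively codimension $1$, cone of $\mathcal C^I$ and with what lattice multiplicity, and checking that the resulting weighted fans equal $(\text{lattice volume of }B)\cdot C^I$ and $[\int_B\Gamma]^1$.
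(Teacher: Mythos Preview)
The paper gives no self-contained argument: it declares Part 1 well known and, for Part 2, cites \cite{sty} in the bounded case, saying the unbounded case is verbatim. The method of \cite{sty} is precisely your ``laborious alternative'' at the end --- compute $\pi_*[\Gamma]^{\dim B}$ and $\pi_*[\Gamma]^{\dim B+1}$ directly from Definitions \ref{defdual} and \ref{defprojfan} and match against the normal fan of $\int_B\Gamma$ via the description of faces of the fiber polytope in terms of $\pi_I$-coherent subdivisions of $B$. So the route the paper has in mind is your afterthought, not your main plan.

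Your main plan --- pass through $\langle\Gamma\rangle=\langle V_\Gamma\rangle$, invoke property 4, and analyse the pushed-forward constructible function $G$ --- is conceptually nice, and your Part 1 works (the sign you flag is a genuine discrepancy with the proposition as stated; note that it reappears consistently with the $(-1)^{\dim B}$ in Corollary \ref{coroltne}.1). But Part 2 has a real gap. You propose to pin down $(\pi_*\langle\Gamma\rangle)_1\in K^1(\mathcal C^I)$ by its intersection numbers with test classes $[Q_1]\cdots[Q_{|I|-1}]$; however in the local case $K_0(\mathcal C^I)=0$ (see the parenthetical in Definition \ref{deftcc}.1), so every such intersection number is zero and nothing is determined. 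You then silently switch to Theorem \ref{thlocexist}.III, which does land in $KK_0(\mathcal C^I)=\Q$, but this forces the $Q_j$ to be \emph{relative} polyhedra $(A_j,B_j)$, and you would still owe an argument that the resulting pairing $K^1(\mathcal C^I)\times KK^{|I|-1}(\mathcal C^I)\to\Q$ is injective on the first factor. Theorem \ref{thlocexist}.III is moreover stated for an analytic set $S$, not for a constructible function $G$, so a further linearity step is missing. Finally, the ``mixed-volume form of the Billera--Sturmfels definition'' you invoke at the end is not an identity one can just cite --- writing it down carefully, especially in the relative/unbounded setting, essentially reproves the \cite{sty} computation. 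The direct fan computation is shorter and sidesteps all of this.
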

The first part is well known, and the second one is proved in \cite{sty} for the case of bounded polytopes. The proof in the general case is the same. Note that the second statement completely characterizes the fiber polytope $\int_B N$ by Remark \ref{remembed}.
\begin{example} 1. Figure \ref{picdual} illustrates the first statement: the exterior normal rays to the bounded edges of a given polygon $N$ form its first dual fan, and the image of this fan under the vertical projection equals a horizontal ray equipped with the weight 3. This weight equals the lattice height of the polygon. 

2. Figure \ref{picfiber} illustrates the second part of the statement: the exterior normal rays to the facets of a given polytope $N$ form its second dual fan, and the image of this fan under the vertical projection equals the first normal fan of the fiber polygon $\int_{N^I} N$.
\end{example}
Denote $$\Psi_{m,I}=\sum_B (-1)^{\dim B}\int_B N,$$
where $B$ runs over all bounded $V$-faces of $N^I$, such that $m|m_B$.
\begin{remark} The set of polyhedra of the form (bounded polytope$)+C^I$ is a semigroup with respect to Minkowski summation. We subtract polyhedra as elements of the Grothendieck group of this semigroup. 
\end{remark}
\begin{definition} The aforementioned Grothendieck group will be referred to as the group of virtual polyhedra. The neutral element of this group is the cone $C^I$, so a virtual polyhedron is said to be trivial, if it equals $C^I$ up to a shift.
\end{definition}
 Note that if a virtual polyhedron is known to be a polyhedron, then its non-triviality is equivalent to having a bounded edge.
\begin{corollary}\label{coroltne} In the setting of Corollary \ref{tropicalpoly}:

1) the number $s$ is a tropical nearby monodromy eigenvalue of the polyhedron $N$ in codimension 0 in $\C^{I}$, if it is a zero or pole of the Varchenko function
$$\zeta_{x_0}=\prod_B (1-t^{m_B})^{(-1)^{\dim B}\Vol B}$$
where $B$ ranges through all bounded $V$-faces of $N^I$, and $\Vol B$ is the lattice volume. 

2) the number $s$ is a tropical nearby monodromy eigenvalue of the polyhedron $N$ in codimension 1 in $\C^{I}$, if the (virtual) polyhedron $\Psi_{m,I}$ is non-trivial.
\end{corollary}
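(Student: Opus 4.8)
The plan is to unwind Definition~\ref{deftropicalpoly} one graded piece at a time, reading off the codimension~$0$ and codimension~$1$ components of each summand $\pi_*\langle\Gamma\rangle$ from Proposition~\ref{tccfib}. The preliminary step is the dictionary between the index sets of the two statements: by Remark~\ref{remvi} the map $\Gamma\mapsto B:=\pi_I(\Gamma)$ is a bijection from the $V_I$-faces of $N$ onto the bounded $V$-faces of $N^I$, with $\dim\Gamma=\dim B+|I|$; and since the recession directions of a $V_I$-face span exactly $\Q^I$, the affine hull of $\Gamma$ is cut out by a primitive integer functional involving only the $\bar I$-coordinates, so $m_\Gamma=m_B$. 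Hence the sum defining $\Phi_{m,I}$ may be reindexed over the bounded $V$-faces $B$ of $N^I$ with $m\mid m_B$. I would also record that $\langle\Gamma\rangle=\sum_{d\ge1}(-1)^{d-1}[\Gamma]^d$ with $[\Gamma]^d\in K^d(\mathcal{C}_\Gamma)$ and $\dim\mathcal{C}_\Gamma=\dim\Gamma$, so that the dimension-preserving pushforward $\pi_*$ carries $[\Gamma]^d$ into the codimension-$(d-\dim B)$ part of $K(\mathcal{C}^I)$; thus the codimension~$0$ (resp.\ codimension~$1$) component of $\pi_*\langle\Gamma\rangle$ comes from the single term $d=\dim B$ (resp.\ $d=\dim B+1$).

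For part~1, that term is $(-1)^{\dim B-1}[\Gamma]^{\dim B}$, whose pushforward Proposition~\ref{tccfib}(1) identifies with $(-1)^{\dim B-1}(\Vol B)\,C^I$. Summing over the relevant faces and using $K^0(\mathcal{C}^I)=\Q\cdot C^I$ gives
$$(\Phi_{m,I})_0=-\Bigl(\sum_{B:\,m\mid m_B}(-1)^{\dim B}\,\Vol B\Bigr)\,C^I,$$
which is nonzero exactly when $\sum_{B:\,m\mid m_B}(-1)^{\dim B}\Vol B\ne0$. Since $s$ is a primitive $m$-th root of unity, $s^{m_B}=1\iff m\mid m_B$, so the last condition is precisely that $t=s$ be a zero or a pole of $\prod_B(1-t^{m_B})^{(-1)^{\dim B}\Vol B}$; together with Definition~\ref{deftropicalpoly} this yields (1).

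For part~2, the relevant term is $(-1)^{\dim B}[\Gamma]^{\dim B+1}$, whose pushforward Proposition~\ref{tccfib}(2) (using the identity $\int_B\Gamma=\int_B N$) identifies with $(-1)^{\dim B}\,\bigl[{\textstyle\int_B} N\bigr]$. Minkowski-additivity of the dual fan, $[P+Q]=[P]+[Q]$ (the remark after Definition~\ref{defdual}), extends to the group of virtual polyhedra, so
$$(\Phi_{m,I})_1=\sum_{B:\,m\mid m_B}(-1)^{\dim B}\,\bigl[{\textstyle\int_B} N\bigr]=\bigl[\Psi_{m,I}\bigr]\in K^1(\mathcal{C}^I).$$
By Remark~\ref{remembed} the dual-fan map is injective on virtual polyhedra and annihilates only the trivial one (whose dual fan has no bounded edge), so $(\Phi_{m,I})_1\ne0$ iff $\Psi_{m,I}$ is non-trivial; with Definition~\ref{deftropicalpoly} this is (2). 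The only point requiring care is the grading-and-sign bookkeeping — matching the codimension in $K(\mathcal{C}^I)$ against the index $d$ in $\langle\Gamma\rangle=\sum_d(-1)^{d-1}[\Gamma]^d$ after the dimension-preserving pushforward, which is what produces the factors $(-1)^{\dim B-1}$ and $(-1)^{\dim B}$ — together with the elementary checks $m_\Gamma=m_B$ and $\int_B\Gamma=\int_B N$. I do not anticipate a genuine obstacle here, since all the geometric content is already packaged in Proposition~\ref{tccfib} and in the relation between $\langle\Gamma\rangle$ and the dual fans of $\Gamma$.
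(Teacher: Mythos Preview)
Your argument is correct and follows exactly the route the paper takes: the paper's proof is the single sentence ``This follows from Corollary~\ref{tropicalpoly} and Proposition~\ref{tccfib},'' and you have supplied the bookkeeping it leaves implicit --- the bijection $\Gamma\leftrightarrow B=\pi_I(\Gamma)$ with $m_\Gamma=m_B$ via Remark~\ref{remvi}, the identification of which graded piece of $\langle\Gamma\rangle=\sum_{d\ge1}(-1)^{d-1}[\Gamma]^d$ survives in each codimension after the dimension-preserving pushforward $\pi_*$, and the appeal to Remark~\ref{remembed} to translate between non-triviality of $\Psi_{m,I}$ and non-vanishing of its dual fan. Your explicit sign tracking is in fact a useful clarification, since it is what makes the alternating signs in the Varchenko product and in the definition of $\Psi_{m,I}$ appear.
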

This follows from Corollary \ref{tropicalpoly} and Proposition \ref{tccfib}.

\subsection{Fiber polyhedra over segments} Motivated by the preceding corollary, we start to study the virtual polyhedron $\Psi_{m,I}$. We assume w.l.o.g. that $I=\{1,\ldots,p\}$ and write the tuple $x=(x_1,\ldots,x_n)$ as $(y,z)$, where $y$ stands for the first $p$ variables, and $z$ for the rest. In particular, we shall write the germ $f(x)$ of an analytic function non-degenerate with respect to its Newton polyhedron $N$ as $f(y,z)=\sum_b f_b(y) z^b$ and denote the restriction $$\sum_{b\in B} f_b(y) z^b\eqno{(\star)}$$ by $f^B$ for a face $B\subset N^I$.

If $B$ is a segment, we can analyse its contribution to $\Psi_{m,I}$ by using its relation to the Newton polyhedra of discriminants. In this setting, the classical discriminant $D_B$ is the equation of the closure of the set of all $y_0\in\CC^{I}$, such that the hypersurface $f^B(y_0,\cdot)=0$ is not regular (i.e. 0 is a critical value of the polynomial function $f^B(y_0,\cdot):\CC^{\bar I}\to\C$). Explicitly, $D_B$ is the Sylvester discriminant of the univariate polynomial $f_{b_k} t^k+\ldots+f_{b_1} t+f_{b_0}$, where $b_0,b_1,\ldots,b_k$ are the consecutive lattice points of the segment $B$. For any $b\in N^I$, denote $\pi_I^{-1}(b)\subset N$ by $N_b$.
\begin{example} For $n=3$ and $I=\{1,2\}$, consider $f(y_1,y_2,z)=f_0+f_1z+f_2z^2$, where $f_0(y_1,y_2)=y_1^2+y_1y_2^2+\ldots$, $f_1(y_1,y_2)=y_2+\ldots$, and $f_2(y_1,y_2)=y_1^3+y_2+\ldots$ (dots stand for the terms above the Newton diagram). The Newton polyhedron $N$ of the germ $f$, its fiber polytope over $B=N^I$, the fibers $N_{b_i}$, and the hatched Newton polyhedron $\Delta_B$ of the discriminant $D_B(f)=f_1^2-4f_0f_2=y_2^2-4y_1y_2-4y_1^5+\ldots$ are shown on Figure \ref{picfibsegm}. This illustrates the following expression for $\Delta_B$.
\end{example}
\begin{figure}\centering\includegraphics{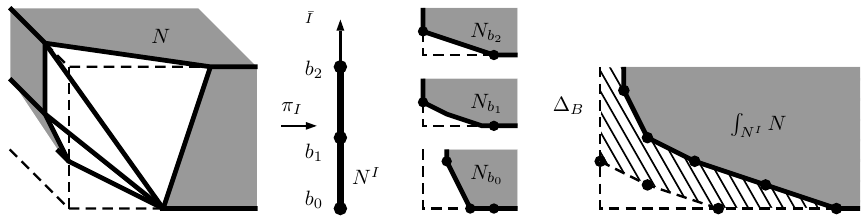}\skipfig{
\psscalebox{1.0 1.0} 
{
\begin{pspicture}(0,-1.7741421)(14.438284,1.7741421)
\definecolor{colour0}{rgb}{0.6,0.6,0.6}
\psline[linecolor=white, linewidth=0.02, fillstyle=hlines, hatchwidth=0.028222222, hatchangle=-60.0, hatchsep=0.1411111](10.028284,0.74585783)(10.028284,-0.8541421)(10.828284,-1.2541422)(12.028284,-1.6541421)(14.428285,-1.6541421)(14.428285,0.74585783)(10.028284,0.74585783)(10.028284,-0.8541421)
\psline[linecolor=white, linewidth=0.002, fillstyle=solid,fillcolor=colour0](10.428285,0.74585783)(10.428285,0.34585786)(10.828284,-0.45414215)(11.628284,-0.8541421)(14.028284,-1.6541421)(14.428285,-1.6541421)(14.428285,0.74585783)(10.428285,0.74585783)
\psline[linecolor=white, linewidth=0.02, fillstyle=solid,fillcolor=colour0](2.6282842,-1.6541421)(4.2282844,-1.6541421)(4.2282844,0.74585783)(3.2282844,1.7458578)(0.028284302,1.7458578)(0.028284302,0.54585785)(0.8282843,-0.65414214)(1.0282843,-0.8541421)(0.6282843,-0.05414215)(0.6282843,1.1458578)(3.4282844,0.74585783)(2.6282842,-1.6541421)
\psline[linecolor=black, linewidth=0.08](0.028284302,1.7458578)(0.6282843,1.1458578)(3.4282844,0.74585783)(4.2282844,0.74585783)
\psline[linecolor=black, linewidth=0.08](3.4282844,0.74585783)(2.6282842,-1.6541421)(4.2282844,-1.6541421)
\psline[linecolor=black, linewidth=0.08](0.6282843,1.1458578)(2.6282842,-1.6541421)
\psline[linecolor=black, linewidth=0.08](0.6282843,1.1458578)(0.6282843,-0.05414215)(2.6282842,-1.6541421)
\psline[linecolor=black, linewidth=0.08](2.6282842,-1.6541421)(1.0282843,-0.8541421)(0.6282843,-0.05414215)(0.028284302,0.54585785)
\psline[linecolor=black, linewidth=0.08](1.0282843,-0.8541421)(0.8282843,-0.65414214)
\psline[linecolor=black, linewidth=0.08](10.428285,0.74585783)(10.428285,0.34585786)(10.828284,-0.45414215)(11.628284,-0.8541421)(14.028284,-1.6541421)(14.428285,-1.6541421)
\psline[linecolor=white, linewidth=0.002, fillstyle=solid,fillcolor=colour0](7.028284,1.7458578)(7.028284,1.3458579)(8.228284,0.9458578)(8.628284,0.9458578)(8.628284,1.7458578)(8.628284,1.7458578)(8.628284,1.7458578)(7.028284,1.7458578)
\psline[linecolor=black, linewidth=0.08](7.028284,1.7458578)(7.028284,1.3458579)(8.228284,0.9458578)(8.628284,0.9458578)(8.628284,0.9458578)(8.628284,0.9458578)
\pscircle[linecolor=black, linewidth=0.08, dimen=outer](7.028284,1.3458579){0.08}
\pscircle[linecolor=black, linewidth=0.08, dimen=outer](8.228284,0.9458578){0.08}
\psline[linecolor=white, linewidth=0.002, fillstyle=solid,fillcolor=colour0](7.028284,0.54585785)(7.028284,0.14585786)(7.428284,-0.05414215)(8.028284,-0.25414217)(8.628284,-0.25414217)(8.628284,0.54585785)(8.628284,0.54585785)(7.028284,0.54585785)
\psline[linecolor=black, linewidth=0.08](7.028284,0.54585785)(7.028284,0.14585786)(7.428284,-0.05414215)(8.028284,-0.25414217)(8.628284,-0.25414217)(8.628284,-0.25414217)
\pscircle[linecolor=black, linewidth=0.08, dimen=outer](7.028284,0.14585786){0.08}
\pscircle[linecolor=black, linewidth=0.08, dimen=outer](8.228284,-0.25414217){0.08}
\psline[linecolor=white, linewidth=0.002, fillstyle=solid,fillcolor=colour0](7.428284,-0.65414214)(7.428284,-0.8541421)(7.428284,-0.8541421)(7.8282843,-1.6541421)(8.628284,-1.6541421)(8.628284,-0.65414214)(8.628284,-0.65414214)(7.428284,-0.65414214)
\psline[linecolor=black, linewidth=0.08](7.428284,-0.65414214)(7.428284,-0.8541421)(7.8282843,-1.6541421)(8.628284,-1.6541421)(8.628284,-1.6541421)(8.628284,-1.6541421)
\pscircle[linecolor=black, linewidth=0.08, dimen=outer](7.428284,-0.8541421){0.08}
\pscircle[linecolor=black, linewidth=0.08, dimen=outer](7.8282843,-1.6541421){0.08}
\pscircle[linecolor=black, linewidth=0.08, dimen=outer](8.228284,-1.6541421){0.08}
\psline[linecolor=black, linewidth=0.04, linestyle=dashed, dash=0.17638889cm 0.10583334cm](0.028284302,-0.65414214)(1.0282843,-1.6541421)(2.6282842,-1.6541421)
\psline[linecolor=black, linewidth=0.04, linestyle=dashed, dash=0.17638889cm 0.10583334cm](0.6282843,1.1458578)(1.0282843,0.74585783)(3.8282843,0.74585783)
\psline[linecolor=black, linewidth=0.04, linestyle=dashed, dash=0.17638889cm 0.10583334cm](1.0282843,0.74585783)(1.0282843,-1.6541421)
\psline[linecolor=black, linewidth=0.04, linestyle=dashed, dash=0.17638889cm 0.10583334cm](7.028284,1.3458579)(7.028284,0.9458578)(8.228284,0.9458578)
\psline[linecolor=black, linewidth=0.04, linestyle=dashed, dash=0.17638889cm 0.10583334cm](7.028284,0.14585786)(7.028284,-0.25414217)(8.228284,-0.25414217)
\psline[linecolor=black, linewidth=0.04, linestyle=dashed, dash=0.17638889cm 0.10583334cm](7.028284,-0.65414214)(7.028284,-1.6541421)(7.6282845,-1.6541421)
\psline[linecolor=black, linewidth=0.04, linestyle=dashed, dash=0.17638889cm 0.10583334cm](10.028284,0.74585783)(10.028284,-1.6541421)(14.028284,-1.6541421)
\psline[linecolor=black, linewidth=0.04, arrowsize=0.05291667cm 2.0,arrowlength=1.4,arrowinset=0.0]{->}(5.6282845,-1.6541421)(5.6282845,1.5458579)
\psline[linecolor=black, linewidth=0.12](5.6282845,0.74585783)(5.6282845,-1.6541421)
\pscircle[linecolor=black, linewidth=0.12, dimen=outer](5.6282845,0.74585783){0.12}
\pscircle[linecolor=black, linewidth=0.12, dimen=outer](5.6282845,-1.6541421){0.12}
\pscircle[linecolor=black, linewidth=0.12, dimen=outer](5.6282845,-0.45414215){0.12}
\psline[linecolor=black, linewidth=0.04, arrowsize=0.05291667cm 2.0,arrowlength=1.4,arrowinset=0.0]{->}(4.6282845,-0.25414217)(5.2282844,-0.25414217)
\rput[bl](5.028284,-0.8541421){$b_1$}
\rput[bl](5.028284,-1.6541421){$b_0$}
\rput[bl](5.028284,0.54585785){$b_2$}
\rput[bl](4.6282845,-0.05414215){$\pi_I$}
\rput[bl](5.028284,1.3458579){$\Q^{\bar I}$}
\rput[bl](7.8282843,1.1458578){$N_{b_2}$}
\rput[bl](7.8282843,-0.05414215){$N_{b_1}$}
\rput[bl](7.8282843,-1.2541422){$N_{b_0}$}
\rput[bl](2.4282844,1.1458578){$N$}
\pscircle[linecolor=black, linewidth=0.08, dimen=inner](10.428285,0.34585786){0.02}
\pscircle[linecolor=black, linewidth=0.08, dimen=inner](10.828284,-0.45414215){0.02}
\pscircle[linecolor=black, linewidth=0.08, dimen=inner](11.628284,-0.8541421){0.02}
\pscircle[linecolor=black, linewidth=0.08, dimen=inner](12.828284,-1.2541422){0.02}
\pscircle[linecolor=black, linewidth=0.08, dimen=inner](14.028284,-1.6541421){0.02}
\rput[bl](12.228284,-0.45414215){$\int_{N^I} N$}
\rput[bl](5.8282843,-1.2541422){$N^I$}
\psline[linecolor=black, linewidth=0.04, linestyle=dashed, dash=0.17638889cm 0.10583334cm](10.028284,-0.8541421)(10.828284,-1.2541422)(12.028284,-1.6541421)
\pscircle[linecolor=black, linewidth=0.08, dimen=inner](10.028284,-0.8541421){0.02}
\pscircle[linecolor=black, linewidth=0.08, dimen=inner](10.828284,-1.2541422){0.02}
\pscircle[linecolor=black, linewidth=0.08, dimen=inner](12.028284,-1.6541421){0.02}
\rput[bl](9.228284,-0.05414215){$\Delta_B$}
\end{pspicture}
}}
\caption{a fiber polyhedron over a segment and the Newton polyhedron of the discriminant}\label{picfibsegm}
\end{figure}

\begin{proposition}[\cite{pgp}]\label{positive}  
If the germ $f$ is non-degenerate, then the Newton polyhedron of the discriminant $D_B(f)$ equals $$\Delta_B=\int_B N-N_{b_0}-N_{b_k}.$$
In particular, this difference is a polyhedron.
\end{proposition}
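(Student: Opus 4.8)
The plan is to reduce $D_B(f)$ to a classical univariate discriminant and then to compute the Newton polyhedron of that discriminant in the $y$-variables, using the Gelfand--Kapranov--Zelevinsky description of the principal $\mathcal A$-determinant together with the fiber-polyhedron calculus of Proposition \ref{tccfib}. First I would record the reduction. Let $b_0,b_1,\dots,b_k$ be the consecutive lattice points of the segment $B$; since they are collinear and equally spaced, $b_i-b_0=i(b_1-b_0)$, so putting $t:=z^{b_1-b_0}$ the restriction $(\star)$ becomes $f^B(y,z)=z^{b_0}g(y,t)$ with $g(y,t)=\sum_{i=0}^k f_{b_i}(y)\,t^i$, a polynomial of degree $k$ in the single variable $t$ whose coefficients are the power series $a_i:=f_{b_i}(y)$, and $D_B(f)=\mathrm{Disc}_t\, g$. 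The only proper faces of $\mathrm{conv}\{0,\dots,k\}=[0,k]$ are its two endpoints, so the principal $\mathcal A$-determinant of $\mathcal A=\{0,1,\dots,k\}$ factors as $E_{\mathcal A}(g)=\pm\, a_0\, a_k\,\mathrm{Disc}_t\, g=\pm\, f_{b_0}\, f_{b_k}\, D_B(f)$, and by the GKZ theorem the Newton polytope of $E_{\mathcal A}$ in the variables $a_0,\dots,a_k$ is the secondary polytope $\Sigma(\mathcal A)$, which by Billera--Sturmfels is the fiber polytope of the projection $\Delta^k\to[0,k]$ sending the $i$-th vertex to $i$. Everything then follows from the identity $\mathrm{Newton}\bigl(E_{\mathcal A}(g)\bigr)=\int_B N$ of Newton polyhedra in $\Q^I$: subtracting $N_{b_0}+N_{b_k}$ (which equal $\mathrm{Newton}(f_{b_0})+\mathrm{Newton}(f_{b_k})$, since the endpoints of a bounded edge of $N^I$ are vertices of $N^I$) yields $\Delta_B=\int_B N-N_{b_0}-N_{b_k}$, and in particular exhibits this Minkowski difference as an honest polyhedron.

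To prove $\mathrm{Newton}(E_{\mathcal A}(g))=\int_B N$ I would compare support functions, evaluating $\mathrm{val}_v$ at a generic $v$ in the interior of $C^I$. On the fiber-polyhedron side $h_{\int_B N}(v)=(1+\dim B)\int_B h_{N_b}(v)\,db$, where $b\mapsto h_{N_b}(v)$ is convex (an infimal projection of the convex set $N$) and, crucially, has its breakpoints only at the lattice points $b_i$: a breakpoint must occur over the $\pi_I$-image of a vertex of the face $\Gamma_B=\pi_I^{-1}(B)\cap N$ of the \emph{lattice} polyhedron $N$, and such an image is a lattice point of $B$. On the GKZ side, writing $E_{\mathcal A}(g)=\pm\sum_{m\in\Sigma(\mathcal A)\cap\Z^{k+1}}c_m\prod_i f_{b_i}(y)^{m_i}$ and using that non-degeneracy of $f$ makes the minimal-weight monomials survive, we get $\mathrm{val}_v(E_{\mathcal A}(g))=\min_{m\in\Sigma(\mathcal A)}\sum_i m_i\,\mathrm{val}_v(f_{b_i})=h_{\Sigma(\mathcal A)}\bigl(\mathrm{val}_v(f_{b_0}),\dots,\mathrm{val}_v(f_{b_k})\bigr)$. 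By the support-function formula for $\Sigma(\mathcal A)=\int_{[0,k]}\Delta^k$, the right-hand side equals $(1+\dim B)$ times the integral over $[0,k]$ of the lower convex envelope of the points $(b_i,\mathrm{val}_v(f_{b_i}))$, and the breakpoint statement identifies this integral with $\int_B h_{N_b}(v)\,db$. Matching the two computations gives $h_{\mathrm{Newton}(E_{\mathcal A}(g))}(v)=h_{\int_B N}(v)$ for all generic $v$, hence the polyhedral identity.

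The main obstacle is the non-degeneracy input just used: one must show that for generic $v$ the $v$-initial part of $D_B(f)$ is the discriminant of $\sum_i f_{b_i}^v(y)\,t^i$ (in particular non-zero), so that no cancellation occurs among the minimal-weight monomials of $E_{\mathcal A}(g)$. Equivalently, the tropicalization of the discriminant hypersurface $\{D_B(f)=0\}$ should be what the naive combinatorics predicts, and this is exactly where non-degeneracy enters. I would deduce it from the observation that $\sum_i f_{b_i}^v(y)z^{b_i}$ is itself the initial part $f^w$ of $f$ for a weight $w=(v,0)+T\ell$ lying in the interior of $\Q^n_+$ (here $\ell$ is the support functional of the bounded edge $B$, necessarily strictly positive, and $T\gg0$): non-degeneracy makes $\{f^w=0\}$ smooth in the torus, and a Bertini-type argument over the generic point of the $y$-torus then gives both $\mathrm{Disc}_t(\sum_i f_{b_i}^v t^i)\ne0$ and the absence of collapse. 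One could also run the whole argument through the resultant $\mathrm{Res}_t(g,\partial_t g)=\pm f_{b_k}D_B(f)$ and the elimination-theoretic description of Newton polytopes of projection images, which leads to the same fiber-polyhedron computation.
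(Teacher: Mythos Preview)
The paper does not itself prove the main identification; it cites \cite{pgp} (and \cite{E11} for higher-dimensional $B$). Your GKZ/secondary-polytope route is precisely the approach of those references and is correct in outline.

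For the ``in particular'' clause, however, the paper does something different and much more elementary. Immediately after Example \ref{exatrivdiscr}, splitting $B$ into its primitive subsegments $B_j=[b_{j-1},b_j]$ and using additivity of the fiber integral over a one-dimensional base gives
\[
\Delta_B=\sum_{j=1}^k\Delta_{B_j}+2\sum_{i=1}^{k-1}N_{b_i}=2\sum_{i=1}^{k-1}N_{b_i},
\]
since each $\Delta_{B_j}$ is trivial. The paper phrases the last vanishing via Example \ref{exatrivdiscr} (invoking the proposition for $k=1$), but it can be seen directly: over a primitive segment the slice of the lattice polyhedron $N$ has all its vertices over the two endpoints, so $\int_{B_j}N=N_{b_{j-1}}+N_{b_j}$. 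This exhibits $\Delta_B$ as an honest Minkowski sum of polyhedra --- and simultaneously yields the explicit formula feeding into Lemma \ref{trivial} --- without any discriminant or GKZ input. Your approach proves more, namely the identification of $\Delta_B$ with the Newton polyhedron of $D_B(f)$, at the cost of the no-cancellation step, which is the genuinely delicate point and where your sketch is thinnest: you should allow $f_{b_i}=0$ for interior $i$ (then $\mathrm{val}_v(f_{b_i})=+\infty$, but the lower convex hull of the remaining finite points still recovers $b\mapsto h_{N_b}(v)$, because the face $\pi_I^{-1}(B)\cap N$ is the convex hull of $\bigcup_i\mathrm{supp}(f_{b_i})\times\{b_i\}$ plus its recession cone), and you should make precise that the initial form of $E_{\mathcal A}$ with respect to the weight $c=(\mathrm{val}_v f_{b_i})_i$ --- which need not be generic even when $v$ is --- does not vanish on the tuple $(f_{b_i}^v)_i$.
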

See \cite{E11} for the generalization of this fact to $B$ of arbitrary dimension.
In what follows, it will be important to know when the polyhedron $\Delta_B$ is trivial, i.e. equals the cone $C^I$ up to a shift. 
\begin{example}\label{exatrivdiscr} If $k=1$, i.e. $B$ is a primitive segment, then $\Delta_B$ is trivial, since the discriminant of a polynomial of degree 1 is trivial.
\end{example}
Hence, for an arbitrary $k$, splitting $B$ into primitive segments $B_1,\ldots,B_k$, we observe that $$\Delta_B=\sum_{j=1}^k\Delta_{B_j}+2\sum_{i=1}^{k-1} N_{b_i}=2\sum_{i=1}^{k-1} N_{b_i}.$$
Here every $\Delta_{B_j}$ vanishes, because, upon a suitable dilatation (making $N_{b_j}$ and $N_{b_{j+1}}$ lattice polyhedra), it falls into the scope of Example \ref{exatrivdiscr}. This equality implies the following.
\begin{lemma}\label{trivial}
The polyhedron $\Delta_B$ is trivial if and only if $N_{b_i}$ is trivial for every $i=1,\ldots,k-1$.
\end{lemma}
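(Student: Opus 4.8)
The plan is to translate everything into support functions, which both disposes of the factor $2$ painlessly and makes the statement transparent. Recall from the computation preceding the lemma that $\Delta_B = 2\sum_{i=1}^{k-1} N_{b_i}$ in the group of virtual polyhedra. Each fiber $N_{b_i}=\pi_I^{-1}(b_i)\cap N$ lies in the affine subspace $\pi_I^{-1}(b_i)$, a translate of $\Q^I$; translating it into $\Q^I$ changes $N_{b_i}$ only by a shift — hence affects neither its triviality nor Minkowski sums up to shift — and turns it into a genuine polyhedron of the form (bounded polytope)$+C^I$, i.e.\ with recession cone $C^I$ (which is what $\mathrm{rec}(N)\cap\Q^I$ equals regardless of the sign convention). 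So it suffices to prove the following elementary fact: for polyhedra $P_1,\dots,P_m\subset\Q^I$ each with recession cone $C^I$, the Minkowski sum $P_1+\dots+P_m$ is a translate of $C^I$ if and only if every $P_j$ is.

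For this I would pass to support functions. Write $D$ for the dual cone $(C^I)^*$, which is full-dimensional and pointed. For a polyhedron $P$ with recession cone $C^I$, its support function $h_P$ is finite precisely on $D$, where it is convex and piecewise linear; it is also positively homogeneous, and $P$ is a translate of $C^I$ exactly when $h_P$ is linear on $D$. Moreover $h_{P_1+\dots+P_m}=h_{P_1}+\dots+h_{P_m}$ on $D$. The "if" direction is now immediate (a sum of linear functions is linear; equivalently $C^I+C^I=C^I$). For the "only if" direction, assume $g:=\sum_j h_{P_j}$ is linear on $D$. Fix $u,u'\in D$ and $t\in[0,1]$. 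Convexity of each $h_{P_j}$ gives
$$h_{P_j}\bigl((1-t)u+tu'\bigr)\le (1-t)h_{P_j}(u)+t\,h_{P_j}(u'),$$
and summing these over $j$ turns the two sides into the two sides of the identity for $g$, which holds with equality because $g$ is affine on $[u,u']$. Hence each summed inequality is in fact an equality, so each $h_{P_j}$ is affine on $[u,u']$; letting $u,u'$ range over the full-dimensional convex cone $D$ and invoking positive homogeneity, each $h_{P_j}$ is linear on $D$, i.e.\ each $P_j$ is a translate of $C^I$. Feeding this back through the reduction — $\Delta_B$ is trivial iff $2\sum_i h_{N_{b_i}}$ is linear on $D$ iff $\sum_i h_{N_{b_i}}$ is — yields the lemma.

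I do not anticipate a genuine obstacle here: the only points needing a little care are the bookkeeping that $N_{b_i}$ sits in a translate of $\Q^I$ rather than in $\Q^I$ itself, and the identification of "triviality" with linearity of the support function on $D$, both routine given the conventions on virtual polyhedra set up above; the one nontrivial input is the standard observation that a sum of convex functions can be affine only if each summand is affine, which is exactly the computation displayed in the middle paragraph. The borderline case $k=1$ — the sum over $i=1,\dots,k-1$ being empty, so that both sides of the equivalence hold vacuously — is covered automatically and is consistent with Example \ref{exatrivdiscr}.
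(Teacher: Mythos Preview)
Your argument is correct and follows the paper's approach: the paper derives the identity $\Delta_B=2\sum_{i=1}^{k-1}N_{b_i}$ immediately before the lemma and then simply asserts that ``this equality implies'' the result, leaving the elementary fact that a Minkowski sum of polyhedra with recession cone $C^I$ is a translate of $C^I$ iff each summand is to the reader. Your support-function argument (a sum of convex functions is affine only if each summand is) spells out precisely this step, so your proof is a more detailed version of the paper's rather than a genuinely different route.
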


\subsection{T. n. e. in codimension one} Given a Newton polyhedron $N$, we explicitly describe all its t.~n.~e. of a small codimension at points of the coordinate plane $\CC^I$ for small $|I|$ or small $|\bar I|$. In particular, this gives a necessary and sufficient supply of monodromy eigenvalues to prove the Denef--Loeser monodromy conjecture for non-degenerate functions of four vairables, see \cite{ELT}.
\begin{example}\label{exasmallcodim}
1. For any $I$, the complex number $s=\exp{2\pi i k/m}$ for coprime $k$ and $m$ is a t. n. e. in codimension 0 at $\CC^I$, iff it is a root or pole of the Varchenko formula for the polyhedron $N^I$, see Corollary \ref{coroltne}.1.

2. For $|I|=0$ and $|I|=1$, every t. n. e. at $\CC^I$ has codimension $0$, so this case a special subcase of the preceding one. 

3. There are no t. n. e. for $|\bar I|=0$. For $|\bar I|=1$, the complex number $s=\exp{2\pi i k/m}$ for coprime $k$ and $m$ is a t. n. e. in codimension 0 at $\CC^I$, iff $m|d$, where $d$ is the starting point of the ray $N^I\subset\Q^1$. Note that in this case $m$ is a tautological denominator, because $f$ has a factor $x_i^d,\, i\in \bar I$. The number $s$ is a t. n. e. in codimension 1 at $\CC^I$, only if it is a t. n. e. in codimension 0, and the polyhedron $N_d$ is non-trivial (i.e. has a bounded edge).
\end{example}
We now study t.~n.~e. of codimension one at the coordinate plane $\CC^I,\, |\bar I|=2$, which is the simplest case outside the scope of the Varchenko formula and trivialities from the preceding example.
\begin{theorem}\label{thfinal} I. Assume that $|\bar I|=2$. Then the virtual polyhedron $\Psi_{m,I}$ is a polyhedron.

II. Moreover, it is non-trivial, i.e. $s=\exp{2\pi i k/m}$ for coprime $k$ and $m$ is a t.~n.~e. of the polyhedron $N$ in codimension 1 at the coordinate plane $\CC^{I}$, if and only if there exists a point $a\in\Z^I$ satisfying all of the following:

1) $a$ is contained in an edge $B\subset N^I$ such that $m|m_B$;

2) $a$ itself is not a $V$-vertex, or a $V$-vertex such that $m\nmid m_a$;

3) the polyhedron $\pi_I^{-1}(a)$ is non-trivial, i.e. has a bounded edge.
\end{theorem}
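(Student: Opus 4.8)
The plan is to work entirely with Corollary~\ref{coroltne}.2, which reduces the assertion to a statement about the virtual polyhedron $\Psi_{m,I}=\sum_B(-1)^{\dim B}\int_B N$: the number $s$ is a t.~n.~e. of $N$ in codimension~$1$ at $\CC^I$ if and only if $\Psi_{m,I}$ is non-trivial, the sum running over the bounded $V$-faces $B$ of $N^I$ with $m|m_B$. Since $|\bar I|=2$, the polyhedron $N^I\subset\Q^{\bar I}$ is planar, so its bounded boundary is a single chain of edges $e_1,\ldots,e_r$ with consecutive vertices $v_0,\ldots,v_r$; as Newton-diagram edges have negative slope, an interior vertex has both coordinates positive, so the only vertices of $N^I$ that can lie on a coordinate axis --- hence be $V$-vertices --- are the extreme ones $v_0$ and $v_r$. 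Thus the bounded $V$-faces of $N^I$ are exactly the edges $e_i$ together with whichever of $v_0,v_r$ is a $V$-vertex, and, writing $N_b=\pi_I^{-1}(b)$ for the fibre of $N$ over $b\in N^I$, we get $\Psi_{m,I}=\sum_v N_v-\sum_{i:\,m|m_{e_i}}\int_{e_i}N$, the first sum over those of $v_0,v_r$ that are $V$-vertices with $m|m_v$.

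The computation of $\Psi_{m,I}$ is a telescoping along this chain. For a bounded edge $B=e_i$, Proposition~\ref{positive} gives $\int_{e_i}N=\Delta_{e_i}+N_{v_{i-1}}+N_{v_i}$, where $\Delta_{e_i}$ is an honest polyhedron --- the Newton polytope of the discriminant $D_{e_i}$. Substituting and collecting terms, the interior fibre $N_{v_j}$ ($1\leqslant j\leqslant r-1$) acquires the coefficient $c_j=\#\{i:v_j\in e_i,\ m|m_{e_i}\}\in\{0,1,2\}$, and an extreme fibre $N_{v_j}$ ($j\in\{0,r\}$) acquires $c_j$ diminished by $1$ precisely when $v_j$ is a $V$-vertex with $m|m_{v_j}$. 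The arithmetic point that makes everything fit is that for such a $V$-vertex, say $v_0=(d,0)$ lying on the edge $e_1$ whose primitive inner normal is $(\alpha,\beta)$, one has $m_{e_1}=\alpha d=\alpha\,m_{v_0}$, so $m|m_{v_0}$ already forces $m|m_{e_1}$; hence whenever the $-1$ correction occurs the contribution coming from $v_0\in e_1$ is present too, the two cancel, and the coefficient of $N_{v_0}$ becomes $0$. Consequently every surviving coefficient is non-negative, and (up to sign) $\Psi_{m,I}=\sum_{i:\,m|m_{e_i}}\Delta_{e_i}+\sum_j c_j N_{v_j}$ --- a non-negative Minkowski combination of honest polyhedra of the form $A+C^I$ with $A$ a bounded lattice polytope, hence itself a polyhedron of that form. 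This is Part~I.

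For Part~II one reads off non-triviality. A non-negative Minkowski combination of polyhedra of the form $A+C^I$ ($A$ bounded) is trivial if and only if each summand with positive coefficient is; therefore $\Psi_{m,I}$ is non-trivial iff some $\Delta_{e_i}$ with $m|m_{e_i}$ is non-trivial or some $N_{v_j}$ with $c_j>0$ is. By Lemma~\ref{trivial}, $\Delta_{e_i}$ is non-trivial iff some interior lattice point of $e_i$ has non-trivial fibre; and $c_j>0$ says exactly that $v_j$ lies on an edge $B$ of $N^I$ with $m|m_B$ and is not a $V$-vertex with $m|m_{v_j}$. Since an interior lattice point of an edge is never a vertex, the two cases merge: $\Psi_{m,I}$ is non-trivial if and only if there is a lattice point $a\in N^I$ whose fibre $\pi_I^{-1}(a)$ is non-trivial, which lies on an edge $B$ with $m|m_B$, and which is not a $V$-vertex with $m|m_a$ --- that is, iff conditions 1)--3) all hold.

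The main obstacle is the vertex bookkeeping inside the telescoping: one has to treat correctly the multiplicities $c_j$ at interior vertices --- where a fibre is counted once for each adjacent edge passing the divisibility test --- and the exact cancellation at the extreme $V$-vertices, which is at the same time what makes $\Psi_{m,I}$ an honest polyhedron (Part~I) and what yields the exclusion of $V$-vertices with $m|m_a$ in condition~2). Two smaller points need a remark: the degenerate positions in which $N^I$ is not two-dimensional (a point, or a segment plus a ray), dispatched by inspecting the then very short sum defining $\Psi_{m,I}$; and the use of the hypothesis $|\bar I|=2$, which is exactly what makes the bounded boundary of $N^I$ a one-dimensional chain, so that Proposition~\ref{positive} in its segment form suffices --- for $|\bar I|\geqslant 3$ one would instead need to control fibre polyhedra over higher-dimensional faces, which is the subject of Problem~\ref{probldef}.
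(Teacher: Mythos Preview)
Your argument is correct and follows the same route as the paper's own proof: decompose $\Psi_{m,I}$ edge-by-edge along the Newton boundary of $N^I$, use Proposition~\ref{positive} to rewrite each $\int_{e_i}N$ as $\Delta_{e_i}+N_{v_{i-1}}+N_{v_i}$, and then read off non-triviality from Lemma~\ref{trivial}. The paper packages the same computation by setting $\Psi_B=i_B\int_B N-i_P N_P-i_Q N_Q$ for each edge $B$ with endpoints $P,Q$ and observing that $\Psi_{m,I}$ (up to sign) is $\sum_B\Psi_B$; your telescoping is the same identity expanded.

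One point you make explicit that the paper leaves tacit is the arithmetic observation that a $V$-vertex $v_0$ with $m\mid m_{v_0}$ forces $m\mid m_{e_1}$ for the adjacent edge: this is exactly what guarantees that $\Psi_B$ is a genuine polyhedron even when $i_B=0$ (since then $i_P=i_Q=0$ automatically), and hence that all coefficients in your expansion are of one sign. Without it the claim ``$\Psi_B$ is a polyhedron by Proposition~\ref{positive}'' is incomplete, so your version actually fills a small gap. Your ``up to sign'' is also honest: with the paper's sign convention it is $-\Psi_{m,I}$, not $\Psi_{m,I}$, that comes out as a non-negative Minkowski combination, which is immaterial for Part~II.

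A minor stylistic comment: you use the symbol $c_j$ both for the raw count $\#\{i:v_j\in e_i,\ m\mid m_{e_i}\}$ and later for the effective coefficient after the $-1$ correction at an extreme $V$-vertex. The intended meaning is recoverable from context, but it would be cleaner to write, say, $c_j'=c_j-i_{v_j}$ for the effective coefficient and state that $c_j'\geqslant 0$ always and $c_j'>0$ iff conditions (1) and (2) hold at $v_j$.
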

\begin{proof} For any face $B$ of $N^I$ define $i_B$ to be 1 if $B$ is a $V$-face such that $m|m_B$, and 0 otherwise. For every edge $B$ of $N^I$ with end points $P$ and $Q$ define the polyhedron $\Psi_B$ to be $i_B\int_BN-i_P N_P-i_Q N_Q$ (this is indeed a polyhedron by Proposition \ref{positive}). The sought polyhedron $\Psi_{m,I}$ is the sum of the polyhedra $\Psi_B$ over all edges $B\subset N^I$, and, by Lemma \ref{trivial}, every summand is trivial unless it contains a point $a$ satisfying the conditions (1-3) above.
\end{proof}
Combining this with Example \ref{exasmallcodim}.1, we obtain the following. We shall say that a number $s$ is contributed by a face $B\subset N^I$, if $B$ is a $V$-face, and $m|m_B$.
\begin{corollary} \label{coroldim4} Assume that $|\bar I|=2$. Then $s=\exp{2\pi i k/m}$ for coprime $k$ and $m$ is a t.~n.~e. of the polyhedron $N$ in codimension 0 or 1 at the coordinate plane $\CC^{I}$, unless one of the following four cases takes place:

1) No $V$-face $B\subset N^I$ contributes $s$.

2) The polyhedron $N^I$ has one edge $B$ with one interior lattice point $p$ and two $V$-vertices $a$ and $b$, all of them contribute $s$, and the polyhedron $N_p$ 
is trivial.

3) $s$ is contributed only by one $V$-vertex $a_i\in N^I$ and its adjacent edge $B$ of lattice length 1, and for the other end point $b\in B,\, b\ne a$, the polyhedron $N_b$
is trivial.

4) The same as (3) for two $V$-vertices and their adjacent edges, provided that these two edges do not coincide.
\end{corollary}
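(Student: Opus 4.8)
\emph{Proof proposal.}
The plan is to read off the statement from the two criteria already at our disposal: Example~\ref{exasmallcodim}.1 (equivalently Corollary~\ref{coroltne}.1) describes the t.~n.~e.\ of $N$ in codimension~$0$ at $\CC^I$ as the zeros and poles of a Varchenko-type product over the bounded $V$-faces of $N^I$, while Theorem~\ref{thfinal} describes the t.~n.~e.\ in codimension~$1$ as the non-triviality of the polyhedron $\Psi_{m,I}$, detected by the existence of a lattice point $a\in N^I$ satisfying conditions (1)--(3) there. Thus $s$ fails to be a t.~n.~e.\ of $N$ in codimension $0$ or $1$ at $\CC^I$ exactly when both of the following hold: (a) the alternating count $\sum_B(-1)^{\dim B}\Vol B$ over the bounded $V$-faces $B\subset N^I$ with $m\mid m_B$ vanishes; and (b) there is no lattice point $a\in N^I$ as in Theorem~\ref{thfinal}. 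The whole proof is then a finite case analysis of the combinatorics of $N^I$ under the constraints (a) and (b).

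The preparatory step is to list the relevant $V$-faces of $N^I$. Since $N^I=\pi_I(N)$ is a lattice polyhedron of the form $(\text{bounded polytope})+C^I$ inside $\Q^{\bar I}=\Q^2$, its bounded $V$-faces are exactly its bounded edges (automatically $V$-edges, being contained in $\Q^{\bar I}$) together with the vertices lying on a coordinate axis; there are at most two such $V$-vertices, one per axis, and neither is the origin because $f(0)=0$. Writing $V^\ast$ for the set of $V$-vertices with $m\mid m_v$ and $E^\ast$ for the set of bounded edges with $m\mid m_B$, condition (a) becomes $|V^\ast|=\sum_{B\in E^\ast}\ell(B)$, where $\ell(B)$ is the lattice length and a $V$-vertex is counted with its $0$-dimensional lattice volume $1$. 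The key observation is that each $v\in V^\ast$ forces a contributing edge: the supporting line of the bounded edge $B_v$ of $N^I$ at $v$ passes through $v$ and has primitive normal with nonzero entries (being normal to a bounded face of $K+C^I$), so $m_{B_v}$ is a positive integer multiple of $m_v$, whence $B_v\in E^\ast$. Therefore $\sum_{B\in E^\ast}\ell(B)\ge|V^\ast|$ already from these adjacent edges, and (a) forces $E^\ast$ to consist precisely of the edges $B_v$, $v\in V^\ast$, each of lattice length $1$ — with the single exception that two $V$-vertices may be the endpoints of one common contributing edge, which must then have lattice length $2$.

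This dichotomy yields exactly four configurations, according to $|V^\ast|\in\{0,1,2\}$ and, when $|V^\ast|=2$, whether the two $V$-vertices share an edge: $|V^\ast|=0$ (equivalently $E^\ast=\varnothing$) is case~1; $|V^\ast|=1$ is case~3; $|V^\ast|=2$ with a common (length-$2$) contributing edge is case~2; and $|V^\ast|=2$ with two distinct adjacent contributing edges is case~4. In each of these, condition (b) is read off from Theorem~\ref{thfinal}: the only lattice points that can satisfy its clauses (1)--(2) are the unique interior lattice point $p$ of the length-$2$ edge in case~2, or the non-$V$-vertex endpoints $b$ of the length-$1$ contributing edges in cases~3 and~4, and requiring clause~(3) to fail for each of them is precisely the triviality of $N_p$, resp.\ of the pertinent $N_b$, recorded in the statement; conversely each of the four configurations is checked directly to satisfy both (a) and (b). The only genuinely delicate point is the exhaustiveness of this casework: one must verify that the degenerate shapes of $N^I$ (not meeting one or both coordinate axes, or carrying an unbounded edge away from an axis) all fall under case~1 once (a) holds, and that the ``non-coinciding edges'' proviso in case~4 is exactly what separates it from case~2. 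Everything else is mechanical once the implication ``$v\in V^\ast\Rightarrow B_v\in E^\ast$'' is in hand.
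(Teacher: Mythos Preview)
Your proposal is correct and follows precisely the approach the paper indicates: combining the codimension-0 criterion (the Varchenko product from Example~\ref{exasmallcodim}.1) with the codimension-1 criterion of Theorem~\ref{thfinal}, and then carrying out the case analysis on the bounded $V$-faces of $N^I\subset\Q^2$. The paper's own proof is a single sentence deferring to this combination, so your write-up is in fact a fully fleshed-out version of what the paper leaves implicit; in particular, your key divisibility observation $m_v\mid m_{B_v}$ (hence $v\in V^\ast\Rightarrow B_v\in E^\ast$) is exactly the mechanism that makes the enumeration of the four cases work, and is not stated explicitly in the paper.
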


\subsection{Generalizing to higher dimensions} 
Theorem \ref{thfinal} and Corollary \ref{coroldim4} allow to prove the monodromy conjecture for non-degenerate analytic functions of four variables. A key problem in extending this approach to higher dimensions is to generalize Theorem \ref{thfinal} and Corollary \ref{coroldim4} to coordinate planes of arbitrary dimension $|I|$ and, further, to arbitrary codimension of t. n. e.

\begin{problem} \label{probldef} 1) For a given polyhedron $N\subset \Q^n$ of the form $($bounded lattice polytope$)+\Q^n_+$, a subset $I\subset\{1,\ldots,n\}$ and a number $s=\exp{2\pi i k/m}$, prove that the virtual polyhedron $\Psi_{m,I}$ is a polyhedron.

2) Classify all cases when $\Psi_{m,I}$ is trivial, i.e. $s$ is not a codimension 1 t.~n.~e. of the Newton polyhedron $N$ at the coordinate plane $\CC^I$.

3) Furthermore, classify all cases when the fan $\Phi_{m,I}$ (Definition \ref{deftne}) is trivial, i. e. $s$ is not a t. n. e. in any codimension.
\end{problem}
As we see from the proof of Theorem \ref{thfinal}, an important step would be to generalize Lemma \ref{trivial}, i.e. to classify all Newton polyhedra $N$ and faces $B\subset N^I$ such that the following Minkowski linear combination of the fiber polytopes
$$\Delta_B=\sum_{\Gamma\; \mbox{\scriptsize is a face of } B} (-1)^{\codim\Gamma}\int_{\Gamma} N$$
is a non-trivial polyhedron.

It is easy to see that $\Delta_B$ is a non-virtual polyhedron, because it equals the Newton polyhedron of $D_B(f^B(y,\cdot))$, where 

-- $f^B$ is a polynomial of $z$, whose coefficients depend analytically on $y$, see $(\star)$, 

-- $D_B$ is the regular $A$-discriminant of this polynomial of $z$ in the sense of \cite{gkz}, 11.1.A. 

One can moreover prove that $D_B$ itself is trivial if and only if $B$ is a unit simplex, however the triviality of the polyhedron $\Delta_B$ (which, besides $B$, depends on $N$) is a more subtle question.

\noindent
\textsc{
HSE University \\
Faculty of Mathematics NRU HSE, 6 Usacheva, 119048, Moscow, Russia}

\end{document}